\documentclass[11pt]{amsart}
\usepackage[english]{babel}
\usepackage{amssymb}
\usepackage{hyperref}
\usepackage[initials]{amsrefs}
\usepackage{tikz-cd}
\usepackage{tikz}
\usetikzlibrary{intersections, decorations.markings}
\usepackage{graphicx}
\usepackage{amssymb, mathrsfs, enumerate}
\usepackage{xcolor}


\newcommand{\bbN}{{\mathbb N}}

\newcommand{\bbR}{{\mathbb R}}

\newcommand{\bbZ}{{\mathbb Z}}
\newcommand{\bbC}{{\mathbb C}}
\newcommand{\bbH}{\mathbb{H}}

\newcommand{\calU}{\mathcal{U}}
\newcommand{\calV}{\mathcal{V}}


\newcommand{\bs}{\backslash}
\newcommand{\cyl}{\operatorname{cyl}}

\newcommand{\id}{\operatorname{id}}
\newcommand{\im}{\operatorname{im}}

\newcommand{\isom}{\operatorname{Isom}}

\newcommand{\pr}{\operatorname{pr}}

\newcommand{\vol}{\operatorname{vol}}

\newcommand{\clos}{\operatorname{clos}}

\newcommand{\coker}{\operatorname{coker}}

\newcommand{\SL}{\operatorname{SL}}

\newcommand{\PSL}{\operatorname{PSL}}

\newcommand{\tors}{\operatorname{tors}}


\newcommand{\norm}[1]{{\left\lVert #1\right\rVert}}


\newtheorem{theorem}{Theorem}[section]
\newtheorem{lemma}[theorem]{Lemma}

\newtheorem{corollary}[theorem]{Corollary}
\newtheorem{cor}[theorem]{Corollary}

\newtheorem{prop}[theorem]{Proposition}

\theoremstyle{definition}

\newtheorem{defn}[theorem]{Definition}

\newtheorem{remark}[theorem]{Remark}
\newtheorem{conjecture}[theorem]{Conjecture}


\begin{document}

\title{Homology and homotopy complexity in negative curvature}

\author{Uri Bader}
\address{Weizmann Institute, Israel}
\email{uri.bader@gmail.com}

\author{Tsachik Gelander}
\address{Weizmann Institute, Israel}
\email{tsachik.gelander@gmail.com}

\author{Roman Sauer}
\address{Karlsruhe Institute of Technology, Germany}
\email{roman.sauer@kit.edu}


\begin{abstract}
A classical theorem of Gromov states that the Betti numbers, i.e.~the size of the free part of the homology groups, of negatively curved manifolds are bounded by the volume. We prove an analog of this theorem for the torsion part of the homology in all dimensions $d\ne 3$. Thus the total homology is controlled by the volume. This applies in particular to the classical case of hyperbolic manifolds. 
In dimension $3$ the size of torsion homology cannot be bounded in terms of the volume. 

As a byproduct, in dimension $d\geq 4$ we give a somewhat precise estimate for the number of negatively curved manifolds of finite volume, up to homotopy, and in dimension $d\ge 5$ up to homeomorphism.

These results are based on an effective simplicial thick-thin decomposition which is of independent interest.
\end{abstract}
\maketitle


\section{Introduction}
\subsection{Bounding the homology}
The topological complexity of Hadamard manifolds is controlled, to some extent, by the volume.
This phenomenon is most nicely illustrated in the case of surfaces of constant negative curvature. Indeed, the Gauss--Bonnet theorem implies that the volume coincides (up to a normalization) with the Euler characteristic, which in turn determines the homeomorphism type of the manifold.
In much greater generality, an important theorem of Gromov whose proof was worked out by Ballmann, Gromov and Schroeder in~\cite{BGS} says that the Betti numbers of a negatively\footnote{As noted in \S \ref{sec:nonpositive} below a general version of this theorem holds for analytic non-positively curved manifolds.} curved manifold are bounded by the volume.

\medskip
\noindent
{\bf Notation:} By {\it normalized bounded negative curvature} we mean that the sectional curvature is contained in a closed sub-interval of $[-1,0)$.

\begin{theorem}[\cite{BGS}]\label{thm:BGS}
For every $d\in\mathbb{N}$
there exists $C=C_{d}>0$ such that for every complete $d$-dimensional Riemannian manifold
of normalized bounded negative curvature
and for every degree $k$,
\[ \operatorname{rank} H_k(M;\bbZ)\leq C \vol(M). \]
\end{theorem}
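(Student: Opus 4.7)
The plan is to bound the Betti numbers via a \emph{good cover} whose nerve computes the real (or rational) homology of $M$, with the number of simplices of the nerve controlled linearly by $\vol(M)$. Since Betti numbers only see the rational homology, it suffices to dominate the sum of dimensions of the chain groups of a CW/simplicial model of $M$.

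First I would invoke the Margulis lemma to produce a thick--thin decomposition $M=M_{\geq\varepsilon}\cup M_{<\varepsilon}$, where $\varepsilon$ is the Margulis constant depending on $d$ and the curvature bounds. On the thick part, the injectivity radius is bounded below by $\varepsilon/2$, so I would pick a maximal $(\varepsilon/4)$-separated subset $\{x_i\}_{i=1}^N$ of $M_{\geq\varepsilon}$ and consider the balls $B_i \defq B(x_i,\varepsilon/2)$. Standard volume comparison in normalized bounded negative curvature gives a two-sided bound on $\vol(B_i)$, so
\[
N \leq C_1\,\vol(M_{\geq\varepsilon}) \leq C_1\,\vol(M).
\]
Because the sectional curvature is negative and each $B_i$ lies inside the injectivity radius, $B_i$ is (strictly) convex, and any nonempty intersection of finitely many of these geodesic balls is convex, hence contractible. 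By the nerve theorem the inclusion of the nerve $\mathcal{N}$ of $\{B_i\}$ into $M_{\geq\varepsilon}$ (or a slightly enlarged open neighbourhood) is a homotopy equivalence onto its image. A bounded geometry argument—comparing with the analogous packing in the model space of curvature $-a$—shows that each ball meets at most a constant number $K_d$ of the others, so $\mathcal{N}$ has dimension $\leq K_d$ and at most $C_2 N \leq C_3\vol(M)$ simplices in each degree.

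Next I would handle the thin part. By Margulis, every connected component of $M_{<\varepsilon}$ is either a tubular neighbourhood of a short closed geodesic or a cusp, and each such piece deformation retracts onto an $(d-1)$-dimensional infranilmanifold (in the tube case, a circle bundle over such an infranil cross-section; in the cusp case, the cross-section itself). The number of components of $M_{<\varepsilon}$ is bounded by $C_4\vol(M)$: tubes contribute volume bounded below, and cusps are counted by a fixed constant times the volume of their boundary cross-sections, which sit inside $M_{\geq\varepsilon/2}$ and can be absorbed into the thick count above. Each individual thin piece has Betti numbers bounded by a constant $C_5=C_5(d)$ (infranilmanifolds of fixed dimension have bounded Betti numbers), so the total Betti-number contribution of the thin part is at most $C_4 C_5 \vol(M)$.

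Finally, a Mayer--Vietoris argument applied to the decomposition of $M$ into the thick part (realized up to homotopy by $\mathcal{N}$) and the thin components (glued along their boundary infranilmanifolds, whose Betti numbers are again bounded) assembles these bounds into a single linear estimate $\operatorname{rank} H_k(M;\bbZ)\leq C\,\vol(M)$. The main obstacle I anticipate is the thin-part bookkeeping in the noncompact case: one must check that the cusp cross-sections admit good covers of size controlled by the ambient volume, and that Mayer--Vietoris does not introduce an extra combinatorial factor depending on how thick and thin pieces are glued. The convexity-in-negative-curvature input makes the thick part essentially automatic; everything else is a careful but routine combinatorial accounting.
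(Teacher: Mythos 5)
A point of orientation first: the paper does not actually prove this statement --- it is quoted from \cite{BGS}, where it is established by quite different, Morse-theoretic arguments that apply even to analytic nonpositively curved metrics. Your thick--thin plus nerve plus Mayer--Vietoris strategy is, instead, essentially the route the present paper takes for its stronger torsion results (Theorem~\ref{thm:HV}, Theorem~\ref{thm:simplicial-decomposition} and \S\ref{sec:hom}), and it is a perfectly viable way to get the rank bound in the pinched negatively curved situation stated here (the infinite-volume case being vacuous). So the plan is sound, but two of your steps fail as written, and the third --- the gluing bookkeeping you flag yourself --- is where the real work sits.

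Concretely: (1) With balls $B(x_i,\varepsilon/2)$ centered at points where you only know $\mathrm{InjRad}\ge\varepsilon/2$, the cover need not be good. Each ball is embedded and contractible, but $B_i\cap B_j$ can be disconnected: a lift of $B_i$ can meet two distinct lifts of $B_j$ as soon as some deck transformation displaces $\tilde x_j$ by less than $2\varepsilon$, which is not excluded by $\mathrm{InjRad}(x_j)\ge\varepsilon/2$ (take $\tilde x_i$ near the midpoint of $[\tilde x_j,\gamma\tilde x_j]$ with $d_\gamma(\tilde x_j)$ slightly above $\varepsilon$). The fix is to shrink the radius: with radius $r<\varepsilon/4$ and an $r$-net, every nonempty intersection is the homeomorphic image of an intersection of convex balls in the Hadamard cover, hence contractible, and the counting and bounded-multiplicity estimates (via Theorem~\ref{thm:comparison}) go through with constants depending only on $d$. (2) The claim that ``tubes contribute volume bounded below'' is false: a tube whose core geodesic has length just below the Margulis constant can have arbitrarily small volume. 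The count of thin components by $C\,\vol(M)$ is nevertheless true, but needs a different argument --- e.g.\ attach to each component a ball of radius $\varepsilon/4$ about a boundary point (where the injectivity radius equals $\varepsilon/2$), which has definite volume by comparison, and bound the multiplicity of these balls by a displacement count as in Corollary~\ref{cor:2.5}; alternatively quote it as in Theorem~\ref{thm:think-thin} and \cite{BGS}*{\S 8}. (3) Note that the nerve is homotopy equivalent to the \emph{union of the balls}, an open neighbourhood of $M_{\ge\varepsilon}$, not to $M_{\ge\varepsilon}$ itself, so in Mayer--Vietoris you must either take $A$ to be that union and control $A\cap B$ directly, or first show the thick part (with a collar) is a deformation retract of such a neighbourhood so that $A\cap B$ becomes the disjoint union of the cross-sections; this boundary control is exactly what \S\ref{sec:hadamard} of the paper labors over. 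For the rank bound alone a rough version suffices, since each cross-section (an infranilmanifold or an $S^{d-2}$-bundle over $S^1$) has Betti numbers bounded by a dimensional constant and the number of components is linear in the volume --- but this identification of $A\cap B$ has to be argued, not just asserted.
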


That is, the abelian group $H_k(M;\bbZ)$ is isomorphic to $\bbZ^{b_k}\oplus \tors_k$ where $b_k\le C\vol(M)$ and $\tors_k$ denotes the torsion part. In recent years there has been a growing interest in the size of the torsion part $\tors_k(M)$ motivated by
number theory and topology~\cites{bergeron+venkatesh, lackenby}.
However, $\tors_k$ is much harder to control than $b_k$. 
By a result of Gromov~\cite{Gromov}*{\S1.8} it is known that the group $\tors_k$ is finite. In the same paper Gromov shows that in dimension~$3$ the size of $\tors_1(M)$ cannot be bounded in terms of the volume $M$ (see \S\ref{sec:3d} below for more results in this direction). Our main contribution is to show that in all other dimensions the logarithm of the torsion is linearly bounded by the volume. 

\begin{theorem} \label{thm: main}
For every $d\neq 3$,
there exists $C=C_{d}>0$ such that for every complete $d$-dimensional Riemannian manifold~$M$ of normalized bounded negative curvature
and for every degree $k$,
\[ \log|\tors H_k(M;\bbZ)| \leq C \vol(M). \]
\end{theorem}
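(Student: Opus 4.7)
The strategy combines the announced effective simplicial thick-thin decomposition with an elementary chain-level bound on torsion. For any finite simplicial complex $K$ of dimension at most $d$, each column of a boundary matrix has at most $d+1$ non-zero entries, all equal to $\pm 1$. A standard argument---bounding $\abs{\tors H_k(K;\bbZ)}$ by the product of the invariant factors of $\partial_{k+1}$, which equals the $r$-th determinantal divisor of $\partial_{k+1}$ and so is bounded by any non-zero $r\times r$ minor (with $r=\rank\partial_{k+1}$), followed by Hadamard's inequality applied to columns---yields
\[ \log\abs{\tors H_k(K;\bbZ)} \;\le\; \tfrac{1}{2}\, N_{k+1}(K)\, \log(d+1), \]
where $N_{k+1}(K)$ is the number of $(k+1)$-simplices of $K$. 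In particular $\log\abs{\tors H_k(K;\bbZ)} = O(N(K))$ with a constant depending only on $d$.

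The proof thus reduces to producing a compact simplicial complex $K$ of dimension $d$ with $H_*(K;\bbZ) \cong H_*(M;\bbZ)$ and $N(K) = O(\vol M)$. The plan is to build $K$ piecewise along the Margulis decomposition $M = M_{\mathrm{thick}} \cup M_{\mathrm{thin}}$. On $M_{\mathrm{thick}}$ the injectivity radius is bounded below by some $\epsilon = \epsilon(d) > 0$, and a standard packing-nerve construction---the nerve of the cover by $\epsilon$-balls around a maximal $\epsilon/4$-separated net, suitably subdivided to a simplicial complex---yields $O(\vol M_{\mathrm{thick}}) = O(\vol M)$ simplices. On $M_{\mathrm{thin}}$, Margulis's lemma bounds the number of connected components by $O(\vol M)$, each component being a Margulis tube around a short closed geodesic or a cuspidal end.

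The main obstacle, and the reason the hypothesis $d \neq 3$ is essential, is triangulating each thin component with only $O(1)$ simplices. In dimensions $d \geq 4$ (and trivially for $d = 2$), the topological rigidity of thin ends---reflecting the absence of any analog of Thurston's hyperbolic Dehn filling---forces each Margulis tube or cuspidal end to realize one of finitely many bounded-complexity topological models, parametrized by the isomorphism type of the virtually nilpotent fundamental group together with a bounded amount of gluing data; each such model admits a fixed combinatorial triangulation. Dimension $d = 3$ is precisely excluded because $(p,q)$-Dehn filling lets a Margulis tube contribute a $\bbZ/p$ summand to $H_1$ for arbitrary $p$ while keeping the volume bounded, so no bounded-complexity topological model is possible. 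After gluing the thick and thin triangulations along a bounded-complexity interface, the chain-level estimate above yields $\log\abs{\tors H_k(M;\bbZ)} \le C_d\, \vol(M)$ as desired.
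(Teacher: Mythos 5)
Your reduction to a chain-level Hadamard/Gabber--Soul\'e bound and your treatment of the thick part (nerve of a ball cover over a separated net) match the paper's strategy. The gap is in the thin part. First, the claim that in $d\ge 4$ every thin component realizes ``one of finitely many bounded-complexity topological models'' with ``a bounded amount of gluing data'' is false for cusps: a cusp cross-section is a closed $(d-1)$-dimensional infranilmanifold, and already for $d=4$ there are infinitely many such (e.g.\ Heisenberg nilmanifolds), with unbounded torsion in their own $H_1$; no fixed finite list of $O(1)$-size triangulations exists. (The paper handles cusps by retracting them onto their boundary components, which lie in $\partial M_+$, and this is exactly why it needs the \emph{relative} statement controlling the pair $(M_+,\partial M_+)$ --- the boundary control you would also need but do not supply.) Second, and more fundamentally, even for tubes the homology of $M$ is not determined by the homology of the pieces together with a ``bounded interface'': the attaching homeomorphism of the tube boundary onto a boundary component of $M_+$ has no a priori bound on its simplicial complexity relative to the given bounded-size models, and a Mayer--Vietoris argument cannot bound torsion without controlling the induced maps --- $3$-dimensional Dehn filling is precisely the example where all pieces are torsion-free and of bounded size yet the glued manifold has arbitrary torsion. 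So your diagnosis of why $d=3$ is excluded is also off: the solid torus is a perfectly bounded-complexity model; what fails is control of the gluing, not of the model.

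The paper circumvents this by never triangulating $M$ at all. For $d\ge 4$ it observes that the inclusion of the tube boundary ($S^{d-2}$-bundle over $S^1$) into the tube ($D^{d-1}$-bundle over $S^1$) is $(d-2)$-connected, so by homotopy excision $M_+\hookrightarrow M$ is $(d-2)$-connected; this identifies $\tors H_p(M)$ with $\tors H_p(M_+)$ for $p\le d-3$, where the simplicial bound applies. For $p\ge d-2\ (\ge 2)$ it uses that the tubes are homotopically $1$-dimensional, so $H_p(M_c;\bbZ)\to H_p(M_c,M_-^c;\bbZ)\cong H_p(M_+,M_0^c;\bbZ)$ is injective (excision), and the torsion of the relative group is bounded via the $(D,c\vol M)$-simplicial \emph{pair}. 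The dichotomy $p\le d-3$ versus $p\ge d-2$ covers all degrees exactly when $d\ne 3$; for $d=3$ degree $1$ falls in the gap, which is the true role of the hypothesis. To repair your proposal you would either have to prove a bound on the simplicial complexity of the attaching maps (which is not available and is false in spirit in dimension $3$), or replace the gluing step by connectivity/excision arguments of the above kind together with the relative boundary control of the thick part.
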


\begin{remark}
Note that the theorem is void in the case that $M$ has infinite volume. 
Theorem \ref{thm: main} applies in particular to the classical case of hyperbolic manifolds of dimension $d\ne 3$. Prior to this work no effective bounds were known for the torsion part of the homology of hyperbolic manifolds.
\end{remark}


\subsection{A simplicial thick-thin decomposition}

A crucial ingredient in our proof of Theorem~\ref{thm: main} is an effective simplicial thick-thin decomposition,
which is provided in Theorem~\ref{thm:simplicial-decomposition in intro} below.
In \cite{hv} it is explained how to bound the complexity of the thick part associated with the classical thick-thin decomposition of a non-positively curved manifold.
Theorem~\ref{thm:simplicial-decomposition in intro} below is of the same spirit
and its proof relies on the method and techniques developed in \cite{hv}.
A novel aspect of Theorem~\ref{thm:simplicial-decomposition in intro} is that it provides a control not only of the complexity of the thick part itself, but also on the inclusion of its boundary. This is crucial in the proof of Theorem~\ref{thm: main} in the case of non-compact manifolds. 


A \emph{$(D,V)$-simplicial complex} is a simplicial complex such that the number of its vertices is at most $V$ and the degree of any vertex is at most $D$.
By a \emph{simplicial pair} $(\mathcal{R},\mathcal{R}_0)$ we mean a simplicial complex $\mathcal{R}$ and a subcomplex $\mathcal{R}_0$. That is, every simplex in $\mathcal{R}_0$ also appears in $\mathcal{R}$.
A simplicial pair $(\mathcal{R},\mathcal{R}_0)$ is called a \emph{$(D,V)$-simplicial pair}
if $\mathcal{R}$ is a \emph{$(D,V)$-simplicial complex}.


The following result is a refinement of related results in \cite{hv} (cf. Theorem 7.4 and \S 8 there) in the normalized bounded negative curvature situation. 

\begin{theorem}[see~Theorem~\ref{thm:simplicial-decomposition}]\label{thm:simplicial-decomposition in intro}
Let $d>1$ be an integer. There are constants $D, c>0$ with the following properties. Every complete $d$-dimensional Riemannian manifold~$M$ of normalized bounded negative curvature has a compact $d$-dimensional submanifold $M_+$ with boundary $\partial M_+$ such that the pair $(M_+,\partial M_+)$ is homotopy equivalent to a $(D,c\cdot\vol{M})$-simplicial pair $(\mathcal{R},\mathcal{R}_0)$ and the closure of $M\setminus M_+$ consists of at most $c\cdot\vol(M)$ many connected components, each of which is either homeomorphic to its boundary times $[0,\infty)$ or to a $D^{d-1}$-bundle over $S^1$.
\end{theorem}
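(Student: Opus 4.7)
The plan is to build $(M_+,\partial M_+)$ from the classical Margulis thick-thin decomposition of $M$, and then produce the homotopy equivalent simplicial pair $(\mathcal{R},\mathcal{R}_0)$ as the nerve of a carefully chosen open cover of $M_+$ that is compatible with its boundary.

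Fix $\epsilon>0$ smaller than the Margulis constant for dimension $d$ and the given curvature bounds, let $M_{\mathrm{thin}}$ denote the $\epsilon$-thin part (points with injectivity radius less than $\epsilon/2$), and set $M_+$ to be the closure of $M\setminus M_{\mathrm{thin}}$. By the Margulis lemma in negative curvature, each component of $M\setminus M_+$ is either a cusp neighborhood diffeomorphic to its boundary times $[0,\infty)$, or a Margulis tube around a short closed geodesic, diffeomorphic to a $D^{d-1}$-bundle over $S^1$. For each such component $T$, a one-sided collar of $\partial T$ of width $\epsilon/8$ lying inside $M_+$ contains an embedded ball of a definite radius, and hence, by Bishop--Gromov volume comparison, has volume at least some constant $\alpha>0$ depending only on $d$ and the curvature bounds. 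Since these collars are pairwise disjoint, the number of components of $M\setminus M_+$ is at most $\vol(M)/\alpha$, giving the required bound $c\cdot\vol(M)$.

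To obtain the simplicial pair I would choose a maximal $(\epsilon/8)$-separated subset $S\subset M_+$ so that $S_0:=S\cap\partial M_+$ is itself maximal $(\epsilon/8)$-separated in the intrinsic metric of $\partial M_+$. Let $\mathcal{R}$ be the nerve of the cover $\{B(x,\epsilon/4)\cap M_+\}_{x\in S}$ and let $\mathcal{R}_0\subset\mathcal{R}$ be the subcomplex spanned by $S_0$. Bishop--Gromov yields $|S|\le c'\cdot\vol(M)$ and a uniform bound $D$ on the number of points of $S$ within distance $\epsilon/2$ of any given net point, so $(\mathcal{R},\mathcal{R}_0)$ is a $(D,c\cdot\vol(M))$-simplicial pair. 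Since $M_+$ lies in the $\epsilon$-thick part, each $B(x,\epsilon/4)$ lifts isometrically to a geodesically convex ball in the universal cover, and the thin components are bounded by smooth convex hypersurfaces (horospheres or equidistant tubes); so every finite intersection of balls in the cover, and every such intersection with $M_+$ or with $\partial M_+$, is contractible by a straight-line deformation in the universal cover. The relative nerve lemma then produces an equivalence of pairs $(\mathcal{R},\mathcal{R}_0)\simeq(M_+,\partial M_+)$.

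The principal difficulty is ensuring the \emph{relative} part of the nerve equivalence. First, one must arrange $S$ so that whenever two balls centered on points of $S_0$ intersect in $M_+$ they already intersect on $\partial M_+$, which requires using the smooth product collar structure of $\partial M_+$ inside $M_+$ (of width comparable to $\epsilon$) coming from the convexity of the thin components. Second, one must verify contractibility of all intersections once the boundary is included, which again relies on the convexity of horoballs and equidistant tubes. Both points are sensitive to a small enough choice of $\epsilon$; the rest of the argument is a strengthening of the absolute nerve construction already carried out in \cite{hv}, which gives the complexity of $M_+$ alone but lacks the boundary control needed for the non-compact case of Theorem~\ref{thm: main}.
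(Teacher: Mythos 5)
There is a genuine gap, and it sits exactly where you relegate the work to a parenthetical: the claim that every intersection of your balls with $M_+$, or with $\partial M_+$, is ``contractible by a straight-line deformation in the universal cover.'' The thick part is the \emph{complement} of a union of convex sub-level sets $\{d_\gamma<\epsilon\}$; a convex ball intersected with such a complement, or with the boundary of such a union, is neither convex nor, in general, contractible (near $\partial M_+$ a single ball can meet the boundaries of several sub-level sets belonging to the same nilpotent thin component, and the resulting intersection can be disconnected or have nontrivial topology). Your appeal to ``smooth convex hypersurfaces (horospheres or equidistant tubes)'' is a constant-curvature picture: in variable negative curvature $\partial M_+$ is generically only a manifold with corners made of pieces of level sets of displacement functions, and no single convex hypersurface bounds a thin component. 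Establishing the relative goodness of a cover of $(M_+,\partial M_+)$ is precisely the hard content of the theorem, not a corollary of convexity of balls, and your two ``principal difficulties'' are exactly the points left unproved.

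The paper's route shows why this cannot be patched with the classical single-$\epsilon$ thick-thin decomposition. Its control of the boundary rests on the obtuse-angle property for gradients of distance functions to the sub-level sets (Lemma~\ref{lem: angle of sublevel sets}), which is proved only for \emph{commuting} short elements; with the standard decomposition the group generated by the short elements at a point near $\partial M_+$ is merely virtually nilpotent, so the hypothesis fails. This forces the modified decomposition of Theorem~\ref{thm:simplicial-decomposition}: thresholds $\epsilon_\gamma=\epsilon(d)/(4\cdot 17^{i(\gamma)})$ weighted by the centrality rank $i(\gamma)$ of parabolic elements inside the maximal normal nilpotent subgroup $N_\zeta$, with $\Gamma'$ restricted accordingly, so that all nearby short elements at a point outside $\tilde M_-$ commute. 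Given that, Theorem~\ref{thm:HV} does not take a nerve of a cover of $(M_+,\partial M_+)$ directly: it first builds a flow-generated strong deformation retract of $M_+$ onto a shrunk thick part $M_+'$, homeomorphic on boundaries and with a ball-stability property (Lemma~\ref{lem: deformation retract via vector field}), and only then covers neighborhoods $N_+\supset M_+'$ and $N_0\supset\partial M_+'$ by genuinely convex sets --- balls $B(z,(b+1)\delta)$ and intersections of such balls with $\overline{(\{d_\gamma<\epsilon_\gamma\})_{\epsilon/2}}$ --- finishing with the mapping-cylinder/cofibration argument behind the relative nerve lemma. None of these ingredients (the weighted thresholds, the commutativity and obtuse-angle step, the controlled retraction, the convex two-type cover indexed by pairs $(z,\gamma)$) appears in your proposal, and without them the relative nerve step you invoke has no justification. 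The component count and the description of tubes and cusps are fine and agree with Theorem~\ref{thm:think-thin}.
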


\subsection{Counting manifolds}
In \cite{Gromov}, following \cite{Cheeger} and \cite{Margulis}, Gromov shows that for $d\neq 3$ the number of homeomorphism classes of closed $d$-dimensional Riemannian manifolds of normalized bounded negative curvature with volume bounded by some $V>0$ is finite\footnote{This result also applies to the number of diffeomorphism classes of such manifolds. For $d\ne 4$ this is due to the fact that every topological manifold has only finitely many smooth structures, for $d=4$ a separated argument is given in \cite{Cheeger}.}.
In \cite{counting} Burger, Gelander, Mozes and Lubotzky give precise bounds for the homotopy types\footnote{In view of Mostow rigidity theorem, in dimension $>2$, homotopic locally symmetric manifolds are isometric, thus the estimates of \cites{counting,hv} apply also to the number of isometry types.} 
 of hyperbolic manifolds of bounded volume (see also \cite{commensurable} for estimates on the number of manifolds up to commensurability, and fundamental groups up to quasi-isometries).
These were extended to general locally symmetric spaces in~\cite{hv}.
We extend these results to the setting of negatively curved manifolds of variable curvature.
Let $\Gamma_{d}(v)$ denote the number of homotopy classes of complete $d$-dimensional Riemannian manifolds of normalized bounded negative curvature and volume $\le v$. In \S\ref{sub: conclusion of proofs} we prove:
\begin{theorem}\label{thm: counting}
For every $d\geq 4$ there are $\alpha,\beta>0$ such that
\[
 \alpha v\log v\le \log \Gamma_d(v)\le \beta v\log v
\]
for sufficiently large $v>0$.
\end{theorem}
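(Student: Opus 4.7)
The plan is to prove the two inequalities separately. The lower bound is essentially free: since every closed hyperbolic $d$-manifold has normalized bounded negative curvature, $\Gamma_d(v)$ is at least the number $\rho_d(v)$ of homotopy types of hyperbolic $d$-manifolds of volume at most $v$. For $d\ge 4$, the main theorem of \cite{counting} gives $\log\rho_d(v)\ge\alpha v\log v$ for sufficiently large $v$, yielding the lower bound in Theorem~\ref{thm: counting}.

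For the upper bound, I would apply Theorem~\ref{thm:simplicial-decomposition in intro} to each complete $d$-dimensional negatively curved manifold $M$ of volume at most $v$, obtaining a compact submanifold $M_+\subset M$, a $(D,cv)$-simplicial pair $(\mathcal{R},\mathcal{R}_0)\simeq(M_+,\partial M_+)$, and a decomposition of the closure of $M\setminus M_+$ into at most $cv$ components, each either a collar $\partial\times[0,\infty)$ or a $D^{d-1}$-bundle over $S^1$. The collar components deformation retract onto their boundaries and hence do not affect the homotopy type. The disc bundle components come in only two homeomorphism types (trivial and twisted), and for each boundary component of $\partial M_+$ the attachment of such a piece is determined up to homotopy by a choice from a finite set, controlled, after quotienting by self-homotopy equivalences of the attached bundle, by the mapping class group of its boundary. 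Consequently the homotopy type of $M$ is determined by the isomorphism class of $(\mathcal{R},\mathcal{R}_0)$ together with, for each of the $O(v)$ boundary components of $\mathcal{R}_0$, a choice from a bounded set.

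Counting the first factor: a $(D,V)$-simplicial complex has $O(V)$ simplices, each specified by a subset of at most $D+1$ of its $V$ vertices, giving at most $V^{O(V)}=e^{O(V\log V)}$ isomorphism types; the choice of the subcomplex $\mathcal{R}_0$ contributes a further factor of $2^{O(V)}$, and the attachment data contribute $C^{O(v)}$. With $V=cv$, everything combines to $e^{O(v\log v)}$, yielding $\log\Gamma_d(v)\le\beta v\log v$.

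The main technical obstacle is to verify that the attachment of a disc bundle piece contributes only boundedly many homotopy configurations per boundary component, since the mapping class group of the boundary $S^{d-2}$-bundle over $S^1$ can a priori be large. The relevant claim is that, modulo self-homotopy equivalences of the attached bundle (which retracts to $S^1$), only finitely many attachment classes produce distinct resulting homotopy types — an assertion in the same spirit as the corresponding counting step carried out in \cite{hv} for locally symmetric spaces, and one that should carry over in the present variable curvature setting using the geometric control provided by Theorem~\ref{thm:simplicial-decomposition in intro}.
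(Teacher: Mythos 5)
Your lower bound coincides with the paper's (restrict to hyperbolic manifolds and quote \cite{counting}), but your upper bound has a genuine gap at exactly the step you flag yourself: the claim that each tube component contributes only boundedly many attachment classes up to homotopy. Nothing in the proposal proves this, and it is precisely the point where the statement is dimension-sensitive: in dimension $3$ the analogous claim is false, since Dehn fillings along a boundary torus yield infinitely many pairwise non-homotopy-equivalent manifolds of bounded volume (Theorem~\ref{thm:3D}). Hence any correct argument must use $d\ge 4$ in an essential way, and your proposal never does. Moreover the group of homotopy classes of self-homotopy-equivalences of an $S^{d-2}$-bundle over $S^1$ need not be finite (already for $d=4$ the factor $\pi_3(S^2)\cong\ZZ$ enters $[S^1\times S^2,S^2]$), so the assertion that only finitely many attachment classes survive modulo self-equivalences of $M_+$ and of the tube is a nontrivial claim; the appeal to ``the same spirit as \cite{hv}'' is not a proof.

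The paper sidesteps this entirely. Every complete manifold of nonpositive curvature is aspherical, so the homotopy type of $M$ is determined by $\pi_1(M)$; the proof of Theorem~\ref{thm: main} shows that the inclusion $M_+\hookrightarrow M$ is $(d-2)$-connected, hence for $d\ge 4$ a $\pi_1$-isomorphism, so $\pi_1(M)\cong\pi_1(\mathcal{R})$ for a $(D,c v)$-simplicial complex $\mathcal{R}$. Since $\pi_1(\mathcal{R})$ depends only on the $2$-skeleton of $\mathcal{R}$, the upper bound reduces to counting possible $2$-skeleta of such complexes, which is $e^{O(v\log v)}$ by the proposition on p.~1164 of \cite{counting}; in particular no counting of tube or cusp attachments, nor of full isomorphism types of $(D,V)$-complexes, is needed. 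If you want to salvage your gluing approach, the cleanest repair is to import exactly these two ingredients (asphericity and the $(d-2)$-connectivity of the inclusion of the thick part), at which point the attachment data become irrelevant and your count collapses to the paper's.
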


Let $\mathcal{P}_d$ denote the family of complete Riemannian $d$-manifolds of finite volume with normalized bounded negative curvature.
Let $\mathcal{P}_d(v)$ denote the number of homeomorphism classes of elements in $\mathcal{P}_d$ of volume at most~$v$.

Provided $d\ge 5$ two manifolds in $\mathcal{P}_d$ are homotopy equivalent
if and only if they are homeomorphic, by the celebrated work of Farrell--Jones~\cite{borelconj}*{Corollary~7.5} on the Borel conjecture. Hence Theorem~\ref{thm: counting} above implies:

\begin{corollary}\label{cor: homeo counting}
For every $d\geq 5$ there are $\alpha, \beta>0$ such that
\[
 \alpha v\log v\le \log \mathcal{P}_d(v)\le \beta v\log v
\]
for sufficiently large $v>0$.
\end{corollary}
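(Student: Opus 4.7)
The plan is to derive Corollary~\ref{cor: homeo counting} directly from Theorem~\ref{thm: counting} by showing that, for $d\geq 5$, the natural surjection from the set of homeomorphism classes in $\mathcal{P}_d$ to the set of homotopy classes is in fact a bijection. The key input is the Borel conjecture for closed aspherical manifolds of dimension at least five, supplied by Farrell--Jones in the form of \cite{borelconj}*{Corollary~7.5}.

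First I would verify that every $M\in\mathcal{P}_d$ is aspherical with torsion-free fundamental group. Since the universal cover $\widetilde{M}$ is a Hadamard manifold, the exponential map makes it diffeomorphic to $\RR^d$, so in particular it is contractible. Moreover, convexity of the squared displacement on a Hadamard manifold implies that any finite-order isometry has a fixed point; as $\pi_1(M)$ acts freely on $\widetilde{M}$, it must be torsion-free. Hence $M$ is a $K(\pi_1(M),1)$.

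With asphericity in hand, Farrell--Jones yields that any homotopy equivalence between two elements of $\mathcal{P}_d$ is homotopic to a homeomorphism. Restricting to manifolds of volume at most $v$, this identification gives $\mathcal{P}_d(v)=\Gamma_d(v)$, and the corollary follows immediately by taking logarithms and invoking Theorem~\ref{thm: counting}.

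The only potential subtlety is that $\mathcal{P}_d$ contains non-compact (finite-volume) examples, whereas the Borel conjecture is classically stated for closed manifolds. Here Theorem~\ref{thm:simplicial-decomposition in intro} is reassuring: each end of $M$ is either a product $\partial M_+\times[0,\infty)$ or a $D^{d-1}$-bundle over $S^1$, so topologically the classification reduces to the compact core $M_+$ with well-understood boundary structure, to which the relative form of the Farrell--Jones rigidity theorem applies. I do not expect this to present a genuine obstacle, as it is absorbed into the content of the cited corollary; once the bijection $\mathcal{P}_d(v)=\Gamma_d(v)$ is in place there is no further counting work to do.
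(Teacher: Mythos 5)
Your proposal is correct and matches the paper's argument: the authors invoke Farrell--Jones \cite{borelconj}*{Corollary~7.5}, whose scope already covers complete finite-volume nonpositively curved manifolds of dimension $\ge 5$, to identify homotopy classes with homeomorphism classes, and then apply Theorem~\ref{thm: counting}. Your preliminary checks of asphericity and torsion-freeness and your closing remark about the non-compact case are fine (though note the $D^{d-1}$-bundles over $S^1$ are compact tubes, not ends), but they add nothing beyond what is implicit in the paper's one-line reduction.
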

	
\subsection{Some $3$-dimensional examples} \label{sec:3d}
It is well known that Theorem~\ref{thm: main} fails in 
dimension~$3$. 

Already in his 1978 paper~\cite{Gromov}, Gromov obtained a sequence $(M_i)$ of pairwise non-homeomorphic 3-dimensional closed manifolds of bounded negative curvature and bounded volume such that the 
size of $\tors_1(M_i)$ tends to~$\infty$.  
Thurston's theory of $3$-dimensional hyperbolic geometry allows to construct similar examples which are all hyperbolic manifolds.

Theorems~\ref{thm:3D} and~\ref{thm:densetorsion} below nicely illustrate the dramatic failure of Theorem~\ref{thm: main} in dimension~$3$, which is caused by the existence of Dehn fillings. We record these results here because they belong to the complete picture of torsion homology in negative curvature and are not stated explicitly elsewhere. Theorem~\ref{thm:3D} is well known among experts, and we claim no credit. Finally, Theorem~\ref{thm:densetorsion} 
is based on a modification of a construction by Brock and Dunfield~\cite{BrDu}. 

\begin{theorem}\label{thm:3D}
There is a family $(M_{p,q})_{(p,q)\in\bbN^2}$ of pairwise non-homotopy equivalent closed hyperbolic $3$-manifolds $M_{p,q}$ satisfying
\begin{itemize}
\item $\vol(M_{p,q})<2.03$
\item $H_1(M_{p,q})\cong\mathbb{Z}/p\mathbb{Z}$.
\end{itemize}
\end{theorem}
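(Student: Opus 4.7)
The plan is to apply Thurston's hyperbolic Dehn surgery theorem to a fixed cusped hyperbolic $3$-manifold of small volume, the canonical choice being the figure-eight knot complement $N = S^3 \setminus K$. It is classical that $N$ carries a complete hyperbolic metric with $\vol(N) = 2.0298\ldots < 2.03$, that $H_1(N;\mathbb{Z}) \cong \mathbb{Z}$ is generated by the meridian $\mu$, and that the longitude $\lambda$ is null-homologous in $N$.

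Thurston's hyperbolic Dehn surgery theorem then provides a finite exceptional set $E \subset \mathbb{Z}^2$ such that for every coprime pair $(p,q) \notin E$ the topological $(p,q)$-Dehn filling $N_{p,q}$ of $N$ admits a complete hyperbolic metric, with $\vol(N_{p,q}) < \vol(N) < 2.03$ and $\vol(N_{p,q}) \to \vol(N)$ as $\abs{p}+\abs{q} \to \infty$. Filling along the slope $p\mu + q\lambda$ kills this class in $H_1$, and since $\lambda$ is null-homologous, a Mayer--Vietoris computation yields $H_1(N_{p,q};\mathbb{Z}) \cong \mathbb{Z}/p\mathbb{Z}$, giving both the homology and the volume assertion at once.

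It then remains to extract a subfamily indexed by $\mathbb{N}^2$ that is pairwise non-homotopy equivalent. For distinct values of $p$, first homology already distinguishes the manifolds. For fixed $p$ and varying $q$, Mostow rigidity reduces non-homotopy-equivalence to non-isometry, so it suffices to find infinitely many coprime $q$'s for which the manifolds $N_{p,q}$ are pairwise non-isometric. This is where I expect the main (though standard) technical work to lie: by Thurston's deformation theory, the complex length of the core geodesic of the filled solid torus depends real-analytically on the hyperbolic Dehn surgery parameter and tends to $0$ as the slope length goes to infinity. For fixed $p$ and $q \to \infty$ this length is therefore a non-constant real-analytic function of $q$, taking infinitely many distinct values, and hence produces infinitely many non-isometric manifolds. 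Reindexing by setting $M_{p,i} := N_{p,q_i(p)}$ for a suitably chosen increasing sequence $q_1(p) < q_2(p) < \cdots$ of admissible $q$'s produces the desired $\mathbb{N}^2$-indexed family.
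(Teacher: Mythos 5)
Your proposal is correct and takes essentially the same route as the paper: Dehn filling the figure-eight knot complement (volume $<2.03$), using Alexander duality to identify $H_1$ of the complement and Mayer--Vietoris to compute $H_1$ of the fillings, and invoking Thurston's hyperbolic Dehn surgery theorem for hyperbolicity and volume control. You are actually slightly more careful than the paper on one point: the paper asserts the family is pairwise non-homotopy equivalent but does not spell out why $M_{p,q}\not\simeq M_{p,q'}$ for fixed $p$, whereas you correctly reduce this via Mostow rigidity to non-isometry and dispose of it by the shrinking-core-geodesic (or, equivalently, strictly-increasing-volume) asymptotics of Dehn filling.
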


Theorem \ref{thm:3D} reveals two facts. Letting $p\to\infty$, one deduces that the torsion homology cannot be bounded in terms of the volume. Fixing $p$ and letting $q\to\infty$, demonstrates that bounding both the volume and the homology is not enough to impose a finiteness result.

For a fixed symmetric space $S$, it was shown in \cite{7s} that for a sequence of finite volume $S$-manifolds $M_n$ which converges to $S$ in the Benjamini--Schramm topology, the corresponding sequence of normalized Betti numbers $b_k(M_n)/\vol(M_n)$ also converges, and its limit was identified with the $k$-th $L^2$-Betti number of $S$.
In particular, for $S=\mathbb{H}^3$, we get
\[ \lim_{n\to\infty} b_1(M_n)/\vol(M_n)=0. \]
It was speculated that also the normalized torsion of the homology of such a sequence will exhibit a similar phenomenon.

However, the next theorem we discuss shows that the normalized torsion can be unbounded.
In \cite{BrDu}, Brock and Dunfield constructed a sequence of hyperbolic $3$-manifolds which are homology spheres and converges to $\mathbb{H}^3$ in the Benjamini--Schramm topology.
In particular, the torsion vanishes along that sequence.
Building on their example one can construct
a sequence of closed hyperbolic $3$-manifolds which converges to $\bbH^3$ in the Benjamini--Schramm topology and for which the normalized torsion tends to an arbitrary value in $[0,\infty]$.

\begin{theorem} \label{thm:densetorsion}
For every $\alpha\in [0,\infty]$ there exists a sequence of closed hyperbolic 3-manifolds $M^\alpha_n$
which are all rational homology spheres, such that the sequence $M^\alpha_n$
converges in the Benjamini--Schramm topology to $\bbH^3$ and
\[ \lim_{n\to\infty} \frac{\log|\tors~ H_1(M^\alpha_n,\bbZ)|}{\vol(M^\alpha_n)}=\alpha. \]
\end{theorem}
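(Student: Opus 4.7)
The plan is to modify the Brock--Dunfield integer homology 3-spheres $N_n$ by a single extra Dehn surgery tuned to produce a prescribed amount of torsion in $H_1$, while leaving Benjamini--Schramm convergence intact. For $\alpha=0$ take $M_n^0\defq N_n$: as integral (and hence rational) homology spheres BS-converging to $\bbH^3$, they satisfy $\log|\tors H_1|/\vol\equiv 0$. Henceforth fix $\alpha\in(0,\infty]$.

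For $n$ large, BS-convergence $N_n\to\bbH^3$ supplies arbitrarily large embedded thick balls in $N_n$. Using such balls one inserts a nearly isometric copy of the compact core of a fixed hyperbolic knot complement $S^3\smallsetminus K$, producing an embedded knot $K_n\subset N_n$ whose exterior $X_n\defq N_n\smallsetminus\nu(K_n)$ is hyperbolic with cusp shape uniformly close to that of $S^3\smallsetminus K$ and with $\vol(X_n)=\vol(N_n)+O(1)$. Since $H_1(N_n;\bbZ)=0$ the knot $K_n$ is null-homologous, so the meridian $\mu_n$ generates $H_1(X_n;\bbZ)\cong\bbZ$ while the longitude $\lambda_n$ vanishes. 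For integers $p_n\to\infty$ to be chosen, let $M_n^\alpha$ be the $(p_n,1)$-Dehn filling of $X_n$; a Mayer--Vietoris calculation yields $H_1(M_n^\alpha;\bbZ)\cong\bbZ/p_n\bbZ$, so $M_n^\alpha$ is a rational homology sphere with $|\tors H_1(M_n^\alpha;\bbZ)|=p_n$.

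By Thurston's hyperbolic Dehn surgery theorem, applied with constants uniform in $n$ thanks to the controlled cusp geometry, for $p_n$ sufficiently large $M_n^\alpha$ is hyperbolic with $\vol(M_n^\alpha)=\vol(X_n)-o_n(1)=\vol(N_n)+O(1)$, and the core geodesic of the filled solid torus is arbitrarily short. Setting $p_n\defq\lfloor\exp(\alpha\vol(N_n))\rfloor$ for $\alpha<\infty$ and $p_n$ super-exponential in $\vol(N_n)$ for $\alpha=\infty$ gives $\log p_n/\vol(M_n^\alpha)\to\alpha$ as required. For BS-convergence, the $R$-thin part of $M_n^\alpha$ is contained in the union of the $R$-thin part of $N_n$ (of volume $o(\vol(N_n))$ by hypothesis) and the Margulis tube around the new short core geodesic (of volume bounded by a universal Margulis constant); dividing by $\vol(M_n^\alpha)\to\infty$ yields $M_n^\alpha\to\bbH^3$ in the BS topology.

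The main technical obstacle is producing the embedded knots $K_n\hookrightarrow N_n$ with uniform control on the cusp geometry of $X_n$, so that Thurston's Dehn surgery theorem can be applied with constants independent of $n$. This is precisely what the BS-convergence of the Brock--Dunfield sequence delivers: it provides arbitrarily large embedded thick balls in $N_n$ into which a fixed model hyperbolic knot complement can be inserted nearly isometrically, making the cusped manifolds $X_n$ converge geometrically to the model and thereby supplying the needed uniform constants.
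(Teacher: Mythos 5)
There is a genuine gap, and it is fatal to the construction as written: a knot $K_n$ contained in an embedded metric ball $B\subset N_n$ can never have hyperbolic exterior when $N_n\not\cong S^3$. Indeed, the sphere $\partial B$ lies in $X_n=N_n\setminus\nu(K_n)$ and bounds, on one side, a ball minus a knotted solid torus (not a ball) and, on the other side, $N_n$ minus an open ball (not a ball, since $N_n$ is a closed hyperbolic manifold). So $X_n$ is a nontrivial connected sum $N_n\,\#\,(S^3\setminus\nu(K))$, hence reducible and not hyperbolic; Thurston's hyperbolic Dehn surgery theorem therefore has no hyperbolic structure on $X_n$ to deform, and the filled manifolds $M_n^\alpha$ are themselves nontrivial connected sums $N_n\,\#\,K(p_n)$, which are never hyperbolic. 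The homological bookkeeping (meridian generating $\bbZ$, Mayer--Vietoris giving $\bbZ/p_n$) is fine, but the geometric heart of the argument --- ``insert a nearly isometric copy of a fixed knot complement into a thick ball'' --- produces the wrong topology. Even a repaired variant (e.g.\ drilling a closed geodesic of $N_n$, which does yield a hyperbolic one-cusped manifold with $H_1\cong\bbZ$ generated by the meridian since $N_n$ is a homology sphere) would still require the uniform control of cusp geometry and filling constants that your proposal asserts without proof; closed geodesics of uniformly bounded length need not exist in a sequence that Benjamini--Schramm converges to $\bbH^3$, so ``constants uniform in $n$'' is not automatic.

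For comparison, the paper avoids this entirely by never closing up the Brock--Dunfield manifolds in the first place: it modifies their construction so that the last Dehn filling (along the curve at height $3$) is \emph{not} performed, producing one-cusped finite-volume hyperbolic manifolds $M_n$ with $H_1(M_n;\bbZ)\cong\bbZ$ generated by the cusp and BS-converging to $\bbH^3$ (Theorem~\ref{thm:BD}). It then fills that single cusp along slopes $(p_n,q)$, uses Lemma~\ref{lem: homology dehn filling} to get torsion exactly $p_n$, and lets $q\to\infty$ for each fixed $n$ so that, by Thurston's theorem and Lemma~\ref{lem:deff->BC}, the filled manifolds BS-converge to $M_n$ with volumes converging to $\vol(M_n)$; a diagonal argument finishes. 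Your outline of the last two steps (choice of $p_n$ growing exponentially in the volume, and controlling the thin part) matches the intended endgame, but the input cusped manifolds must come from keeping a cusp of the Brock--Dunfield construction open, not from inserting a knot into a ball of the closed manifolds.
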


In Theorem~\ref{thm: Benjamini-Schramm and explosive torsion}, which is weaker than Theorem~\ref{thm:densetorsion}, we provide a simpler construction 
of a Benjamini--Schramm convergent sequence of hyperbolic $3$-manifolds with explosive torsion.

The sequences given in the above theorems are non-arithmetic. Moreover, they are not uniformly discrete, i.e. the minimal injectivity radius in $M_n$ tends to $0$. It is conjectured
that the normalized analytic torsion in a residual tower of coverings of a closed hyperbolic $3$-manifold~$M$
converges to $\tau^{(2)}(M)/\vol(M)=\frac{1}{6\pi}$,
where $\tau^{(2)}(M)$ is the $L^2$-torsion of~$M$~\cite{lueck}*{Question~13.73 on p.~487}.
That the normalized size of the torsion in first homology in such a residual tower converges
to the same value is related to the asymptotic vanishing of regulators. Bergeron
and Venkatesh conjecture this to be the case provided $M$ is arithmetic~\cite{bergeron+venkatesh}*{Conjecture~1.3}.
We discuss these issues further in \S\ref{subsection:analytic}.

\subsection{From negative to non-positive curvature}\label{sec:nonpositive}
Recall that the analog of Theorem \ref{thm:BGS} was proved in \cite{BGS} for non-positively curved manifolds without Euclidean factors, under the assumption that the metrics are analytic. 
It is natural to ask whether the results of this paper can be extended as well from the class of negatively curved to the class of non-positively curved manifolds.

By the rank-rigidity theorem~\cites{ballmann, burns+spatzier} an irreducible Hadamard manifold is either of Jacobi rank one or a locally symmetric space of non-compact type of real rank at least two. Jacobi rank one manifolds resemble in many ways negatively curved manifolds and it might be a challenge to generalize our results to that class, and the analysis of \cite{BGS}*{Appendix II} by V.~Schroeder is relevant to that case.
A higher rank complete manifold of finite volume is arithmetic by the Margulis' arithmeticity theorem.
Recall Conjecture 1.3 from~\cite{hv}:

\begin{conjecture}\label{conj:HV}
Let $X$ be a symmetric space of non-compact type. Then there are $\alpha>0$ and $\delta>0$ such that every arithmetic $X$-manifold $M$ is homotopy equivalent to a $(\delta,\alpha\vol(M))$-simplicial complex.
\end{conjecture}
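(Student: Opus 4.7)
The plan is to combine reduction theory for arithmetic groups with the nerve-of-cover technique underlying Theorem~\ref{thm:simplicial-decomposition in intro}, and to induct on the $\bbQ$-rank of the ambient algebraic group. Setting $\Gamma=\pi_1(M)$, reduction theory (Borel--Harish-Chandra, together with the Borel--Serre compactification) yields a decomposition $M = M_+\cup\bigcup_{i=1}^N C_i$ where $M_+$ is a compact submanifold of $M$ with $\vol(M_+)\leq \vol(M)$, and the cusp regions $C_i$ correspond bijectively to $\Gamma$-conjugacy classes of proper $\bbQ$-parabolic subgroups of the ambient $\bbQ$-algebraic group. Standard counting for arithmetic lattices (in the spirit of Borel--Prasad) is expected to give $N\leq c\cdot\vol(M)$.

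For the thick part $M_+$, the Margulis lemma yields a uniform positive lower bound on the injectivity radius at every point of $M_+$, depending only on $X$. One then adapts the construction underlying Theorem~\ref{thm:simplicial-decomposition in intro} and the method of \cite{hv}: cover $M_+$ efficiently by embedded convex balls of a fixed small radius with multiplicity controlled by a dimensional constant, and pass to the nerve to obtain a $(\delta,\alpha\vol(M))$-simplicial complex $\mathcal{R}_+$ homotopy equivalent to $M_+$, with $\delta,\alpha$ depending only on $X$.

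For each cusp $C_i$ one argues by induction on $\bbQ$-rank. The cusp $C_i$ deformation retracts onto its cross-section $\partial C_i$, which carries the structure of a nilmanifold bundle over an arithmetic locally symmetric space $M_i'$ attached to the Levi quotient of the associated parabolic, with $\bbQ$-rank strictly smaller than that of $M$; the fiber is a compact nilmanifold coming from the unipotent radical. The inductive hypothesis supplies a controlled simplicial model for $M_i'$, while each nilmanifold fiber admits a triangulation whose local complexity is bounded purely in terms of $\dim X$. Assembling these into a simplicial model for the bundle $\partial C_i$ and gluing to $\mathcal{R}_+$ along the controlled inclusion $\partial C_i\hookrightarrow\partial M_+$ yields the desired $(\delta,\alpha\vol(M))$-simplicial complex homotopy equivalent to $M$.

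The principal obstacle is this last step: one must produce a bundle of simplicial models over the inductive base $M_i'$ with transition data of uniformly bounded complexity. The twisting of the nilmanifold bundle is governed by monodromy arising from arithmetic subgroups of the Levi factor acting by automorphisms on the unipotent radical, and a priori these automorphisms need not lie in a universally bounded set. What is required is an effective trivialization, simplex by simplex over a subdivision of the inductive base, with gluing data controlled by $\vol(M)$. The hope is to extract this from a combination of finiteness of conjugacy classes of arithmetic lattices in the Levi factor with an effective Mahler-type compactness statement for lattices in the unipotent radical; converting such inputs into the quantitative uniform bounds demanded by the conjecture is the heart of the difficulty.
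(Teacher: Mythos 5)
This statement is labeled \emph{Conjecture}~\ref{conj:HV} precisely because the paper does not prove it --- it is recalled from~\cite{hv} as an open problem, and the authors explicitly remark that extending the homology bound to \emph{compact} higher-rank arithmetic manifolds ``by purely geometric and topological means seems a challenging task.'' The paper records that the non-compact arithmetic case was handled in~\cite{hv} and that a weak version holds outside three exceptional symmetric spaces, but the full statement remains open. So there is no in-paper proof to compare against, and you should not present a proof attempt of a conjecture as though it were a routine exercise.

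Beyond that framing issue, your sketch has a concrete mathematical error at its start. You assert that after cutting off cusps via reduction theory, the Margulis lemma gives a uniform lower bound on the injectivity radius at every point of the resulting compact core $M_+$, depending only on $X$. That is false: a compact (or compact-core) arithmetic locally symmetric manifold can contain arbitrarily short closed geodesics, so the injectivity radius on $M_+$ is not uniformly bounded below over the family of all arithmetic $X$-manifolds. The genuine thick-thin decomposition must be taken, and in higher rank the thin part is \emph{not} simply a union of Margulis tubes and cusp neighborhoods as it is in rank one --- the thin part can contain thin flat regions of higher dimension with much more intricate structure, and this is exactly where the known nerve-of-balls arguments break down. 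Your argument silently assumes the rank-one picture.

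Finally, you yourself flag that the inductive assembly step --- producing a simplicial bundle over the Levi locus with transition data of uniformly bounded complexity --- is unresolved, and that you only ``hope'' to extract it from finiteness and Mahler-compactness inputs. That is an honest assessment, but it means the proposal is a research program, not a proof. The two genuine gaps to name are: (a) the incorrect uniform injectivity-radius claim on the compact core, which hides the real difficulty of the higher-rank thin part, and (b) the missing quantitative control on the nilmanifold-bundle gluing data, which you acknowledge but do not supply.
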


In view of Lemma \ref{lem: torsion bound general from simplicial pair}, Conjecture \ref{conj:HV} implies the conjectured bound on the homology. Recall that Conjecture \ref{conj:HV} was proved in \cite{hv} for non-compact arithmetic manifolds and in particular the homology bounds follow in the non-compact arithmetic case. Moreover, a weak version of Conjecture \ref{conj:HV} was obtained in \cite{hv} for all symmetric spaces with the three exceptions of $X=\mathbb{H}^3,\mathbb{H}^2\times\mathbb{H}^2,\SL_3(\mathbb{R})/\text{SO}(3)$. This weak version is enough to deduce the counting results (see \cite{hv}*{Theorem 1.5, Theorem 1.11} and \cite{vol-vs-rank}*{Corollary 1.4} for the exceptional cases above excluding $\mathbb{H}^3$).

However, to extend Theorem \ref{thm: main} to compact higher rank locally symmetric manifolds by purely geometric and topological means seems a challenging task. In particular, trying to adopt the lines of~\cite{BGS} one encounters many difficulties which do not appear in~\cite{BGS} since they do not have an impact on the rational homology.
More specifically, recall that the basic idea of \cite{BGS} is to define an appropriate function for which one can apply the Morse lemma, and argue by induction on the dimension of the singular set. In our situation, it may very well happen that the singular set has a $3$-dimensional factor in which case we loose control on the torsion completely (cf.~Theorem~\ref{thm:3D}).

\subsection{Lattices in rank 1 simple groups}

Some of the results stated above are novel already in the setting of locally symmetric manifolds.
In particular, the following is an immediate application of Theorem~\ref{thm: main}.

\begin{cor} \label{cor:lattices}
Let $G$ be a connected simple Lie group of real rank 1.
Assume $G$ is not locally isomorphic to $\text{SO}(3,1)$.
Then there exists $C=C_{G}>0$ such that for every torsion free lattice $\Gamma<G$ and for every degree $k$,
\[ \log|\tors H_k(\Gamma;\bbZ)| \leq C \vol(G/\Gamma). \]
\end{cor}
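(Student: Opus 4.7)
The plan is to deduce the corollary as an immediate consequence of Theorem~\ref{thm: main} applied to the locally symmetric quotient manifold. First I would fix a torsion-free lattice $\Gamma<G$ and form the locally symmetric space $M\defq \Gamma\backslash G/K$, where $K<G$ is a maximal compact subgroup. Because $X\defq G/K$ is a Hadamard manifold (the symmetric space of a rank-one simple Lie group is negatively curved) and $\Gamma$ acts freely and properly discontinuously, $M$ is a complete $d$-dimensional Riemannian manifold of finite volume $\vol(M)=c_G\cdot\vol(G/\Gamma)$ for some constant $c_G>0$ depending only on $G$ and on the choice of Haar measure, and it is aspherical with fundamental group $\Gamma$. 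In particular $H_k(\Gamma;\bbZ)\cong H_k(M;\bbZ)$ for every $k$.

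Next I would record the two standard geometric facts we need. The symmetric metric on $X$ has sectional curvature contained in a closed sub-interval of $(-\infty,0)$ whose endpoints depend only on $G$; by rescaling the metric by a constant $\lambda_G>0$ we may assume these endpoints lie in $[-1,0)$, so that $M$ equipped with the rescaled metric has normalized bounded negative curvature. Second, the dimension $d=\dim X$ is determined by $G$: up to local isomorphism the rank-one simple Lie groups are $\operatorname{SO}(n,1)$ ($n\ge 2$), $\operatorname{SU}(n,1)$ ($n\ge 1$), $\operatorname{Sp}(n,1)$ ($n\ge 1$) and $F_4^{-20}$, whose symmetric spaces have real dimensions $n$, $2n$, $4n$ and $16$ respectively. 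The only case in which $d=3$ occurs is $G$ locally isomorphic to $\operatorname{SO}(3,1)$, which is excluded by hypothesis. Hence $d\ne 3$.

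Now I would apply Theorem~\ref{thm: main} in dimension $d$ to the manifold $M$ with the rescaled metric: there exists $C'=C_d'>0$ such that
\[
\log\absfix{\tors H_k(M;\bbZ)}\le C'\cdot \vol_{\mathrm{rescaled}}(M)=C'\lambda_G^{d}\cdot \vol(M).
\]
Combining this with the identification $H_k(\Gamma;\bbZ)\cong H_k(M;\bbZ)$ and the proportionality $\vol(M)=c_G\cdot\vol(G/\Gamma)$ yields the desired inequality with $C=C_G\defq C'\lambda_G^{d}c_G$, which depends only on $G$.

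There is no real obstacle in this argument; it is a straightforward specialization of Theorem~\ref{thm: main}. The only sanity check required is the dimension tabulation in the second paragraph, ensuring that the exclusion of $\operatorname{SO}(3,1)$ precisely removes the bad dimension $d=3$ and that no other rank-one simple Lie group produces a three-dimensional symmetric space.
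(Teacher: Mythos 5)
Your proof is correct and is essentially the argument the paper intends: the corollary is stated in the paper as an immediate consequence of Theorem~\ref{thm: main}, and you have supplied exactly the routine steps that make it immediate (identifying $H_k(\Gamma;\bbZ)$ with $H_k(\Gamma\backslash G/K;\bbZ)$ since the locally symmetric space is aspherical, rescaling the symmetric metric to put the pinched negative curvature into $[-1,0)$, verifying the volume proportionality, and checking that the hypothesis $G\not\cong_{\mathrm{loc}}\operatorname{SO}(3,1)$ is precisely what excludes $\dim(G/K)=3$).
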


By Theorem~\ref{thm:3D} the restriction that $G$ is not locally isomorphic to $\text{SO}(3,1)$ is indeed necessary.
We note that similar analogues of Theorem~\ref{thm:BGS} and Theorem~\ref{thm: counting} were established for all lattices (i.e, with no torsion-free assumption) in \cite{Samet} and \cite{vol-vs-rank}, respectively.
We do not know if one can omit the assumption that the lattices are torsion-free in Corollary~\ref{cor:lattices}.

\subsection{Structure of the paper}

In the next section we will collect some homotopy theoretical preliminaries.
The technical heart of this paper is \S\ref{sec:hadamard} in which we prove Theorem~\ref{thm:HV}.
Theorem~\ref{thm:simplicial-decomposition in intro} will be proven in \S\ref{sec:mainproof}
and our main results will be proven in \S\ref{sec:hom}. 
Finally, \S\ref{sec:dim3} will be devoted to the study of torsion in dimension~3.

\subsection{Acknowledgment}
We thank Ian Biringer, Yair Minsky and Juan Souto for their advice regarding 3-dimensional geometry. We thank Vikram Aithal and Hartwig Senska for helpful mathematical and stylistic comments. We thank the referee for a detailed and extremely helpful report.  

We are grateful to the organizers of the conference \emph{Manifolds and Groups 2015} in Ventotene where a lot of progress on this project took place. U.B and T.G acknowledge the support of ISF-Moked grant
2095/15. U.B acknowledges the support of ERC grant 306706.

\section{Homotopy-theoretic preliminaries}

\subsection{Some facts about cofibrations}

On a technical level, it will be important for us to extend homotopies from subspaces and to detect homotopy equivalences locally. We collect some well known results about cofibrations that deal with these issues. Recall that a continuous map $i\colon A\to X$ is a \emph{cofibration} if it has the \emph{homotopy extension property} meaning that any continuous map $(X\times \{0\}\cup_{i\times\id} A\times [0,1])\to Y$ can be extended to a continuous map $X\times [0,1]\to Y$. 

\begin{remark}\label{rem: what are cofibrations}
We assume that all spaces are Hausdorff.
Then a cofibration is always an inclusion with closed image. If $A\subset Y$ is
a closed subspace and $(Y,A)$ is an \emph{NDR pair}, i.e.~there
is a continuous function $u\colon Y\to [0,1]$ with $u^{-1}(0)=A$ and a map $H\colon Y\times [0,1]\to Y$ such that
\begin{enumerate}
\item $H(a,t)=a$ for $a\in A$ and $t\in [0,1]$,
\item $H(x,0)=x$ for $x\in Y$,
\item $H(x, 1)\in A$ if $u(x)<1$,
\end{enumerate}
then $A\subset Y$ is a cofibration.
For a reference see~\cite{may}*{Section~6.4}. From that one easily sees that the inclusion of a boundary of a manifold is a cofibration. Another important example of a cofibration is
the inclusion of a simplicial subcomplex~\cite{tomdieck}*{Proposition~8.3.9 on p.~206}.
\end{remark}

The next theorem~\cite{tomdieck}*{Proposition~5.3.4 on p.~112} says that being a homotopy equivalence is a local property in the presence of cofibrations.

\begin{theorem}\label{thm: homotopy equivalence between pushouts}
A commutative diagram
\[
\begin{tikzcd}
          A\arrow{d}{\simeq}\arrow[hookleftarrow]{r} & B\arrow[r]\arrow{d}{\simeq} & C\arrow{d}{\simeq}\\	
          A'\arrow[hookleftarrow]{r} & B'\arrow{r} & C
  \end{tikzcd}
\]
where the hooked arrows are cofibrations and the vertical arrows are homotopy equivalences induces a
homotopy equivalence between the pushouts of the rows.	
\end{theorem}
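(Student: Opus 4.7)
Reading the diagram as having $C'$ in the bottom-right corner (the displayed $C$ there appears to be a typo), the universal property of pushouts produces a canonical map
\[\Phi\colon A\cup_B C\longrightarrow A'\cup_{B'}C'\]
from the three given vertical maps. My plan is to show $\Phi$ is a homotopy equivalence by constructing a homotopy inverse $\Psi$ together with homotopies $\Phi\Psi\simeq\id$ and $\Psi\Phi\simeq\id$, using two standard ingredients: mapping-cylinder replacement, to turn the right-hand horizontal maps into cofibrations as well, and the homotopy extension property (HEP), to rigidify up-to-homotopy data into a strict morphism of pushout diagrams.

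For the first ingredient I would factor the map $B\to C$ as $B\hookrightarrow M_f\twoheadrightarrow C$ through the mapping cylinder, and similarly for $B'\to C'$. Since $B\hookrightarrow A$ is a cofibration, an iterated application of HEP collapses the cylinder factor and shows that the natural maps $A\cup_B M_f\to A\cup_B C$ and $A'\cup_{B'} M_{f'}\to A'\cup_{B'}C'$ are homotopy equivalences. It therefore suffices to prove the statement in the case when both horizontal maps in each row are cofibrations.

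For the second ingredient, let $g_A\colon A'\to A$, $g_B\colon B'\to B$, $g_C\colon C'\to C$ be homotopy inverses to the three vertical maps; they commute with the structure maps only up to homotopy. I would apply HEP for the cofibration $B'\hookrightarrow A'$ to a chosen homotopy witnessing $g_A|_{B'}\simeq g_B$ (identifying $B,B'$ with their images), thereby replacing $g_A$ by a homotopic $\tilde g_A$ that strictly satisfies $\tilde g_A|_{B'}=g_B$; the analogous construction on the other row produces $\tilde g_C$. The strict triple $(\tilde g_A,g_B,\tilde g_C)$ then assembles, via the universal property, into a map $\Psi\colon A'\cup_{B'}C'\to A\cup_B C$. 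The same HEP device promotes the homotopies inverting the vertical equivalences to homotopies of pushout data, which glue to give $\Psi\Phi\simeq\id$ and $\Phi\Psi\simeq\id$.

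The main obstacle I anticipate is the compatibility at the shared face: the rigidifications of $g_A$ and $g_C$, and of the homotopies to the identity, must restrict to the same map (respectively the same homotopy) on $B$ and $B'$ in order to descend to the pushout. This is arranged by performing both HEP-extensions with respect to the same cofibration and using one fixed homotopy $g_A\circ(B'\hookrightarrow A')\simeq(B\hookrightarrow A)\circ g_B$ as input to both extension problems, so that agreement on $B'$ is automatic; the analogous bookkeeping for the homotopies witnessing $\Psi\Phi\simeq\id$ and $\Phi\Psi\simeq\id$ amounts to a careful double application of HEP, relative first to the inclusion $B'\hookrightarrow A'$ and then to the inclusion of $A'\cup_{B'}C'$ into its cylinder.
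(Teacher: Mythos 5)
Your reading of the bottom-right corner as $C'$ is correct, and two of your three steps are sound and standard: the mapping-cylinder replacement reducing to the case where all four horizontal maps are cofibrations, and the use of the homotopy extension property for $B'\hookrightarrow A'$ (and, after the reduction, for $B'\hookrightarrow C'$) to rigidify chosen homotopy inverses into a strictly commuting triple $(\tilde g_A,g_B,\tilde g_C)$, hence a map $\Psi$ between the pushouts; the latter is exactly the device of Lemma~\ref{lem: making diagram commutative}. Note that the paper itself does not prove the statement but quotes it from tom Dieck (Proposition~5.3.4), so your argument has to stand on its own.

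The gap is in the final step, which is not bookkeeping. A homotopy $\Psi\Phi\simeq\id$ on $A\cup_BC$ amounts to homotopies $K_A\colon\tilde g_Af_A\simeq\id_A$, $K_B\colon g_Bf_B\simeq\id_B$, $K_C\colon\tilde g_Cf_C\simeq\id_C$ that agree \emph{strictly} over $B$, i.e.\ $K_A\circ(j\times\id)=j\circ K_B$ for $j\colon B\to A$ and similarly over $B\to C$. Starting from a chosen $K_B$, HEP for $B\hookrightarrow A$ does let you extend $j\circ K_B$ to a homotopy of $A$ beginning at $\tilde g_Af_A$, but its end is only some self-map $\alpha$ of $A$ with $\alpha\circ j=j$ that is \emph{freely} homotopic to $\id_A$; to conclude you would need a homotopy $\alpha\simeq\id_A$ through maps under $B$ (stationary on $j(B)$), and this is not a formal consequence of HEP --- an under-$B$ self-map freely homotopic to the identity need not be under-$B$ homotopic to it, the discrepancy being governed by a $\pi_1$-action on the relevant mapping space, exactly as for pointed versus free homotopy. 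Every rigorous proof gets around this by invoking Dold's theorem that a map under $B$ between cofibrations which is a homotopy equivalence is a cofibre homotopy equivalence, with inverse and homotopies stationary on $B$ (this is the circle of ideas behind Theorem~\ref{thm: cofibrations and homotopy equivalences}, which the paper explicitly notes enters the proof of the present statement), and by restructuring the argument: one factors the comparison through mapping cylinders of $f_B$ so as to reduce to comparisons with $B=B'$, where the under-$B$ homotopies provided by Dold glue with constant homotopies on the other piece --- rather than trying to match a prescribed free homotopy $K_B$ as you do. As written, your ``careful double application of HEP'' hides precisely this nontrivial ingredient, so the construction of the homotopies $\Psi\Phi\simeq\id$ and $\Phi\Psi\simeq\id$ does not go through.
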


The next theorem~\cite{tomdieck}*{cf.~Exercise~8 on p.~207} says that the homotopy type of a pushout only depends on the homotopy type
of the attaching map in the presence of a cofibration.

\begin{theorem}\label{thm: homotopy type of pushout}
For $i\in\{1,2\}$ let
\begin{equation*}
		\begin{tikzcd}
				A\arrow[hook]{d}{j}\arrow{r}{g_i} & B\arrow[hook]{d}\\
				X\arrow{r} & Y_i
		\end{tikzcd}
\end{equation*}
be a pushout diagram with a cofibration~$j$. If $g_1$ is homotopic to $g_2$ then $Y_1$ is homotopy equivalent to $Y_2$.
\end{theorem}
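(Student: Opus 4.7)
The plan is to interpolate between $Y_1$ and $Y_2$ by a single ``thickened'' pushout built from a homotopy between $g_1$ and $g_2$, and then invoke Theorem~\ref{thm: homotopy equivalence between pushouts} twice.

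Let $H\colon A\times [0,1]\to B$ be a homotopy with $H_0=g_1$ and $H_1=g_2$. Since $j\colon A\to X$ is a cofibration, so is the product map $j\times\id\colon A\times [0,1]\to X\times [0,1]$; this is a standard consequence of the NDR criterion recalled in Remark~\ref{rem: what are cofibrations} (pick an NDR representation $(u,\varphi)$ for $(X,A)$ and pull it back along the projection $X\times[0,1]\to X$). With this cofibration in hand, I would form the pushout
\[
Y\defq X\times [0,1]\cup_{A\times [0,1]} B,
\]
where $A\times[0,1]$ is attached to $X\times[0,1]$ via $j\times\id$ and to $B$ via $H$.

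Next I would compare $Y_1$ with $Y$ via the diagram
\[
\begin{tikzcd}
X \arrow{d}{i_0} \arrow[hookleftarrow]{r}{j} & A \arrow{d}{i_0}\arrow{r}{g_1} & B \arrow{d}{=}\\
X\times [0,1] \arrow[hookleftarrow]{r}{j\times\id} & A\times [0,1] \arrow{r}{H} & B
\end{tikzcd}
\]
whose vertical maps are the inclusions at $t=0$ and hence homotopy equivalences, and which commutes since $H\circ i_0=g_1$. Theorem~\ref{thm: homotopy equivalence between pushouts} then yields a homotopy equivalence $Y_1\simeq Y$ between the pushouts of the two rows. The same argument using the inclusions $i_1$ at $t=1$ in place of $i_0$, together with $H\circ i_1=g_2$, gives $Y_2\simeq Y$. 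Combining the two, $Y_1\simeq Y_2$, which is the claim.

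I do not anticipate any genuine obstacle here: the argument is essentially a direct application of Theorem~\ref{thm: homotopy equivalence between pushouts} with the cylinder $A\times[0,1]$ inserted precisely to absorb the homotopy $H$. The only point requiring a moment's verification is that $j\times\id_{[0,1]}$ remains a cofibration, which is routine from the NDR description.
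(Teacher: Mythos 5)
Your argument is correct: forming the cylinder pushout $Y = X\times[0,1]\cup_{A\times[0,1]}B$ over the homotopy $H$ and applying Theorem~\ref{thm: homotopy equivalence between pushouts} to the two strictly commuting diagrams given by the slice inclusions $i_0$ and $i_1$ yields $Y_1\simeq Y\simeq Y_2$, and your verification that $j\times\id_{[0,1]}$ is again a cofibration by pulling back NDR data along the projection is exactly the point that needed checking. The paper gives no proof of its own here (it cites tom Dieck, Exercise~8 on p.~207), and your derivation is precisely the standard argument that reference intends, so nothing further is required.
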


The following is a fundamental result in homotopy theory which is also used in the proof of the previous two theorems.

\begin{theorem}[\cite{may}*{Proposition on p.~47}]\label{thm: cofibrations and homotopy equivalences}
If in a commutative square
\begin{equation*}
		\begin{tikzcd}
				A\arrow[hook]{d}\arrow{r}{\simeq} & B\arrow[hook]{d}\\
				X\arrow{r}{\simeq} & Y
		\end{tikzcd}
	\end{equation*}
the horizontal maps are homotopy equivalences and the vertical maps are cofibrations then $(X,A)$ and $(Y,B)$ are homotopy equivalent as pairs of spaces.
\end{theorem}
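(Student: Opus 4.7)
The plan is to construct a homotopy inverse of the pair map $(f,f_A)\colon (X,A)\to (Y,B)$, where $f_A\colon A\to B$ and $f\colon X\to Y$ denote the two horizontal arrows (so that $f\circ i=j\circ f_A$ with $i\colon A\hookrightarrow X$ and $j\colon B\hookrightarrow Y$ the two cofibrations). First I would pick arbitrary homotopy inverses $g'\colon B\to A$ of $f_A$ and $g_0\colon Y\to X$ of $f$. Combining $f\circ g_0\simeq\id_Y$, $f_A\circ g'\simeq\id_B$ with the commutativity of the square would yield a chain of homotopies $f\circ g_0\circ j\simeq j\simeq j\circ f_A\circ g'=f\circ i\circ g'$. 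Precomposing this with $g_0$ and invoking $g_0\circ f\simeq\id_X$ would produce a homotopy $H\colon B\times [0,1]\to X$ from $g_0\circ j$ to $i\circ g'$.

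Next, since $j$ is a cofibration, I would apply the homotopy extension property to $H$ with initial map $g_0\colon Y\to X$, obtaining an extension $\widetilde H\colon Y\times [0,1]\to X$. Setting $\tilde g\defq\widetilde H(-,1)$ would give a map $\tilde g\colon Y\to X$ with $\tilde g\circ j=i\circ g'$ and $\tilde g\simeq g_0$; in particular $\tilde g$ is still a homotopy inverse of $f$, and $(\tilde g, g')$ is a genuine map of pairs $(Y,B)\to (X,A)$.

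It would then remain to verify that the two compositions $(\tilde g,g')\circ(f,f_A)$ and $(f,f_A)\circ(\tilde g,g')$ are homotopic to the respective identity pair maps via homotopies \emph{of pairs}. The map $h\defq\tilde g\circ f\colon X\to X$ is a pair self-map of $(X,A)$ with $h|_A=g'\circ f_A$, and the construction above produces free homotopies $h\simeq\id_X$ in $X$ and $h|_A\simeq\id_A$ in $A$. The tool needed here is the auxiliary fact that for a cofibration pair $(X,A)$, any pair self-map freely homotopic to the identity on $X$ and, after restriction, on $A$ is in fact homotopic to the identity as a pair map. This is proved by assembling the two separate free homotopies into a coherent pair homotopy through repeated use of the homotopy extension property, exploiting that $A\times [0,1]\cup X\times\{0,1\}\hookrightarrow X\times [0,1]$ is again a cofibration whenever $i$ is. The symmetric argument would handle the other composition.

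The hard part will be this final compatibility step: the two free homotopies $h\simeq\id_X$ on $X$ and $h|_A\simeq\id_A$ on $A$ are chosen independently and need not agree on $A\times [0,1]$, so their assembly into a single pair homotopy is the technical heart of the theorem. Once it is secured, the construction above exhibits $(f,f_A)$ as a homotopy equivalence of pairs, showing that $(X,A)$ and $(Y,B)$ are homotopy equivalent as pairs of spaces.
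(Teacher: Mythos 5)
The paper itself does not prove this statement --- it quotes it from May's book --- so your proposal has to stand on its own, and as written it does not: the auxiliary fact you invoke in the final step is false. Take $(X,A)=(S^1\vee S^2,\{x_0\})$ with $x_0$ the wedge point (a cofibration pair), and let $h\colon X\to X$ be the based map obtained from $\id_X$ by dragging the basepoint once around the circle factor: extend the loop, viewed as a homotopy on $\{x_0\}$, over all of $X$ using the homotopy extension property and let $h$ be the time-$1$ map. Then $h$ is a pair self-map of $(X,A)$, it is freely homotopic to $\id_X$ by construction, and $h|_A=\id_A$. But a pair homotopy for $(X,\{x_0\})$ is exactly a based homotopy, and $h$ is \emph{not} based homotopic to $\id_X$: restricting to the sphere gives $[h|_{S^2}]=t\cdot[\iota]\neq[\iota]$ in $\pi_2(S^1\vee S^2)\cong\mathbb{Z}[t,t^{-1}]$, where $\iota$ is the inclusion of the sphere and $t$ the generator of $\pi_1$ acting on $\pi_2$. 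So ``freely homotopic to the identity on $X$ and, after restriction, on $A$'' does not imply ``pair homotopic to the identity'', and no amount of HEP bookkeeping can assemble two independently chosen free homotopies into a single pair homotopy --- the obstruction you would need to kill is precisely this $\pi_1$-action, and in the example it is nonzero.

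The first half of your construction is fine: upgrading $g_0$ to a strict pair map $(\tilde g,g')$ with $\tilde g\circ j=i\circ g'$ via the cofibration $j$ is a legitimate and standard move. The gap is that knowing $\tilde g\circ f\simeq\id_X$ and $g'\circ f_A\simeq\id_A$ \emph{separately} is strictly weaker than what you need, and your proposed tool for bridging that difference is false. The established proofs (e.g.\ the one in May's book cited by the paper, or tom Dieck's) take a different route: one first proves the cofiber homotopy equivalence theorem --- a map under $A$ between cofibrations under $A$ that is an ordinary homotopy equivalence is a homotopy equivalence under $A$, with homotopies relative to $A$ --- and then deduces the pair statement by factoring $(f,f_A)$ through a pushout $B\cup_A X$ or a mapping cylinder and applying the under-$A$ result and the gluing lemma to the factors. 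There the homotopies are controlled on the subspace from the start, rather than patched together a posteriori; that control is exactly what your sketch is missing, and supplying it is the actual content of the theorem.
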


\begin{lemma}\label{lem: making diagram commutative}
	Assume $j\colon A\to X$ is a cofibration, and that the left square below
		commutes up to homotopy. Then there is a map $F'\colon X\to Y$ homotopic to $F$ such that the right square below
		(strictly) commutes.
		\begin{equation*}
		\begin{tikzcd}
				A\arrow[hook]{d}{j}\arrow{r}{f} & B\arrow{d}{g}\\
				X\arrow{r}{F} & Y
		\end{tikzcd}\quad\quad
		\begin{tikzcd}
				A\arrow[hook]{d}{j}\arrow{r}{f} & B\arrow{d}{g}\\
				X\arrow{r}{F'} & Y
		\end{tikzcd}
	\end{equation*}
	In particular, if $F$ is a homotopy equivalence then $F'$ is one as well.
\end{lemma}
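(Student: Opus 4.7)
The plan is a direct application of the homotopy extension property of the cofibration $j\colon A\hookrightarrow X$. By assumption the left square commutes up to homotopy, so I can fix a homotopy $H\colon A\times[0,1]\to Y$ with $H(-,0)=F\circ j$ and $H(-,1)=g\circ f$.

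Next, I would assemble the data $F$ and $H$ into a single continuous map on the union
\[
\Phi\colon X\times\{0\}\,\cup_{j\times\id}\, A\times[0,1]\longrightarrow Y,
\]
defined by $\Phi(x,0)=F(x)$ for $x\in X$ and $\Phi(a,t)=H(a,t)$ for $(a,t)\in A\times[0,1]$. These two definitions agree on the overlap $A\times\{0\}$ since $H(-,0)=F\circ j$, so $\Phi$ is well defined. Because $j$ is a cofibration, the homotopy extension property supplies a continuous extension $\tilde H\colon X\times[0,1]\to Y$ of $\Phi$.

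Now set $F'\defq \tilde H(-,1)\colon X\to Y$. Then $\tilde H$ itself is a homotopy from $F=\tilde H(-,0)$ to $F'$, so $F'\simeq F$. Moreover, for every $a\in A$,
\[
F'(j(a)) \;=\; \tilde H(j(a),1) \;=\; H(a,1) \;=\; g(f(a)),
\]
so $F'\circ j=g\circ f$ and the right square commutes on the nose. For the final clause, being a homotopy equivalence is an invariant of the homotopy class of a map, so if $F$ is a homotopy equivalence then so is $F'$. There is no genuine obstacle here; the only point requiring care is ensuring the two pieces of $\Phi$ agree on $A\times\{0\}$, which is automatic from the choice of $H$.
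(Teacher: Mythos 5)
Your proof is correct and is essentially the paper's argument: both define a map on $X\times\{0\}\cup A\times[0,1]$ from $F$ and a homotopy witnessing the commutativity of the left square, extend it via the homotopy extension property, and take $F'$ to be the time-$1$ map. Your write-up simply spells out the details more explicitly.
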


\begin{proof}
Let $h\colon A\times [0,1]\cup X\times\{0\}\to Y$ be a homotopy from
$F\circ j$ to $g\circ f$ on $A\times [0,1]$ and $F$ on $X\times\{0\}$. By the cofibration property $h$ extends to a homotopy $H\colon X\times [0,1]\to Y$. Set $F':=H_1$.
\end{proof}

\subsection{Revisiting the nerve construction}\label{sub: revisiting nerve}
A family of subsets of a space $Y$ is called an \emph{open cover} of $Y$ if
the union of their interiors cover $Y$.
We recall two constructions of spaces associated to an open cover $\calU=\{U_i\}_{i\in I}$ of a space~$Y$.
This material is needed in the final step of the
proof of Theorem~\ref{thm:HV} in \S\ref{sub: proof of thm:HV}.

The \emph{nerve} of $\calU$ is the simplicial complex $N(\calU)$ whose simplices correspond to tuples in $\calU$ with nonempty intersection, i.e. every set of $\calU$ corresponds to a vertex and two sets form an edge if they intersect, etc. More precisely, $N(\calU)$ is the quotient
space of the disjoint union of copies of $\Delta^n$ ranging over $n\ge 0$ and the
subsets $\{i_0,\ldots, i_n\}\subset I$ of cardinality $n+1$ such that $U_{i_0}\cap\ldots \cap U_{i_n}\ne\emptyset$ with identifications
over the faces of $\Delta^n$ corresponding to dropping an element
in $\{i_0,\ldots, i_n\}$.

In a similar spirit, one defines the space
$Y_\calU$ as in Hatcher's book~\cite{hatcher}*{Section~4G} as the quotient space of the disjoint union of products
\[\bigl(U_{i_0}\cap\ldots\cap U_{i_n}\bigr)\times \Delta^n\]
ranging over $n\ge 0$ and the
subsets $\{i_0,\ldots, i_n\}\subset I$ of cardinality $n+1$ such that   $U_{i_0}\cap\ldots \cap U_{i_n}\ne\emptyset$. The identifications in the quotient are
over the faces of $\Delta^n$ corresponding to dropping an element
in $\{i_0,\ldots, i_n\}$.

Next we discuss some easy functorial properties. Let $A\subset Y$ be a subspace. Let $J\subset I$ be a subset, and
let $\calV=\{V_j\}_{j\in J}$ be an open cover of $A$ such that $V_j\subset U_j$ for every $j\in J$.
The inclusion maps
\[ V_{j_0}\cap\ldots V_{j_n}\times \Delta^n\to U_{j_0}\cap\ldots U_{j_n}\times \Delta^n \]
are compatible with the identification and induce a map
\begin{equation}\label{eq: from homotopy colimit to homotopy colimit}
A_\calV\to Y_\calU.
\end{equation}
Similarly we get an embedding of simplicial complexes
\begin{equation}\label{eq: from nerve to nerve}
	 N(\calV)\to N(\calU).
\end{equation}
Further the left factor projection
\[\bigl(U_{i_0}\cap\ldots\cap U_{i_n}\bigr)\times \Delta^n\to U_{i_0}\cap\ldots U_{i_n}\subset Y\]
and the right factor projection
\[\bigl(U_{i_0}\cap\ldots\cap U_{i_n}\bigr)\times \Delta^n\to \Delta^n\]
induce maps
\begin{equation}\label{eq: comparision maps}
N(\calU)\leftarrow Y_\calU\rightarrow Y.
\end{equation}

Recall that an open cover $\calU$ of a topological space $Y$ is called \emph{good} if every nonempty intersection of sets of the cover is contractible.
The following result is taken from Hatcher's book~\cite{hatcher}*{Section~4.G}
in the case that the elements of $\calU$ are open. Hatcher does not assume that $\calU$ is locally finite.
An open cover in our sense (i.e.~the interiors of the elements of $\calU$ cover the space but the elements are not required to be open) is numerable
on paracompact spaces provided it is locally finite, and one can appeal to~\cite{tomdieck}*{Section~13.3} for the general case.

\begin{remark}
If the cover in question consists of open sets rather than sets whose interiors cover the space the assumption of local finiteness can be dropped in the following results.
\end{remark}

\begin{theorem}\label{thm: htp equivalence for nerve of good cover}
Let $\calU$ be a locally finite open cover of a paracompact space~$Y$.
Then $Y_\calU\to Y$ is a homotopy equivalence. If, in addition, $\calU$ is good then
also $Y_\calU\to N(\calU)$ is a homotopy equivalence.
\end{theorem}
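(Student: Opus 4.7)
The plan is to handle the two claims by separate but parallel arguments. For the first, I would produce a homotopy inverse $s\colon Y\to Y_\calU$ from a partition of unity subordinate to $\calU$, furnished by paracompactness together with local finiteness. For the second, assuming $\calU$ is good, I would induct along the natural skeletal filtration of $Y_\calU$ and $N(\calU)$ by simplex dimension, reducing at each step to the pushout comparison provided by Theorem~\ref{thm: homotopy equivalence between pushouts}.

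Concretely, for the first part let $\{\varphi_i\}_{i\in I}$ be a locally finite partition of unity subordinate to $\{\interior U_i\}_{i\in I}$. For every $y\in Y$ the index set $I(y)=\{i:\varphi_i(y)>0\}$ is finite, and $y\in \bigcap_{i\in I(y)} U_i$, so this intersection is nonempty. I would define $s(y)$ to be the class of $\bigl(y,(\varphi_i(y))_{i\in I(y)}\bigr)$ in $Y_\calU$. Continuity is immediate from local finiteness, and $\pi\circ s = \id_Y$ holds on the nose. For the reverse composition, a straight-line homotopy
\[
H\bigl([x,(t_i)_{i\in\sigma}],\lambda\bigr)=\bigl[x,((1-\lambda)t_i+\lambda\varphi_i(x))_{i\in\sigma\cup I(x)}\bigr]
\]
takes place within the single simplex indexed by $\sigma\cup I(x)$, whose corresponding intersection of the $U_i$'s contains $x$; a routine check shows this is well-defined modulo the quotient identifications along $\partial\Delta^n$ and exhibits $s\circ\pi\simeq\id_{Y_\calU}$.

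For the nerve comparison under the goodness hypothesis, let $Y_\calU^{(n)}\subset Y_\calU$ be the image of $\bigsqcup_{k\le n}\bigsqcup_\sigma U_\sigma\times\Delta^k$, and let $N(\calU)^{(n)}$ be the $n$-skeleton of $N(\calU)$. These sit in pushouts
\begin{equation*}
\begin{tikzcd}[column sep=small]
\bigsqcup_\sigma U_\sigma\times\partial\Delta^n\arrow[hook]{d}\arrow{r} & Y_\calU^{(n-1)}\arrow[hook]{d}\\
\bigsqcup_\sigma U_\sigma\times\Delta^n\arrow{r} & Y_\calU^{(n)}
\end{tikzcd}
\qquad
\begin{tikzcd}[column sep=small]
\bigsqcup_\sigma \partial\Delta^n\arrow[hook]{d}\arrow{r} & N(\calU)^{(n-1)}\arrow[hook]{d}\\
\bigsqcup_\sigma \Delta^n\arrow{r} & N(\calU)^{(n)}
\end{tikzcd}
\end{equation*}
indexed by $(n+1)$-element subsets $\sigma$ with $U_\sigma\ne\emptyset$. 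Goodness makes each $U_\sigma$ contractible, so the projections $U_\sigma\times\Delta^n\to\Delta^n$ and $U_\sigma\times\partial\Delta^n\to\partial\Delta^n$ are homotopy equivalences. The inclusions on the left are cofibrations, so induction on $n$ combined with Theorem~\ref{thm: homotopy equivalence between pushouts} produces compatible homotopy equivalences $Y_\calU^{(n)}\to N(\calU)^{(n)}$; passing to the colimit (the filtrations being exhaustive thanks to local finiteness) delivers the second claim.

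The step I expect to consume most of the bookkeeping is verifying that the straight-line homotopy $H$ in the first part genuinely descends to $Y_\calU$ and is continuous. The subtlety is that as $x$ varies the index set $I(x)$ may jump, so the ambient simplex $\sigma\cup I(x)$ changes; local finiteness of $\calU$ tames this by bounding the relevant simplex dimension locally, and the face identifications along $\partial\Delta^n$ encode exactly the vanishing of coordinates that allows $I(x)$ to enlarge or shrink continuously. Once this is checked, everything else---including the colimit step in the second part---is essentially formal, with the geometric content entirely encoded by the partition of unity (first part) and the contractibility of the $U_\sigma$ (second part).
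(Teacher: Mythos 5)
Your proposal is correct and is essentially the argument the paper itself defers to: the paper proves Theorem~\ref{thm: htp equivalence for nerve of good cover} by citing Hatcher \S 4.G and tom Dieck \S 13.3, and those sources proceed exactly as you do --- a partition of unity (which you rightly take subordinate to the interiors $\{\interior U_i\}$, matching the paper's convention that the $U_i$ themselves need not be open) gives the section $s$ and the straight-line homotopy for $Y_\calU\to Y$, while the good-cover comparison $Y_\calU\to N(\calU)$ follows from your skeletal induction using Theorem~\ref{thm: homotopy equivalence between pushouts} and a colimit argument along the (cofibration) skeleton inclusions. The only quibble is cosmetic: exhaustiveness of the skeletal filtration is automatic from the construction and has nothing to do with local finiteness, which enters only through numerability of the cover in the paper's non-open setting.
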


Clearly, we obtain the following well known result as a consequence~\citelist{\cite{hatcher}*{Corollary 4G.3.}\cite{tomdieck}*{Theorem 13.3.1 on p.~324}}.

\begin{corollary}[Nerve lemma]
The nerve of a locally finite good open cover of a paracompact space $Y$ is homotopy equivalent to $Y$.
\end{corollary}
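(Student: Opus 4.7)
The Nerve lemma is a direct corollary of Theorem~\ref{thm: htp equivalence for nerve of good cover}, so the plan is simply to assemble the two homotopy equivalences produced by that theorem into one between $Y$ and $N(\calU)$. No additional geometric or combinatorial input is needed; the work has already been done.

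Concretely, given a locally finite good open cover $\calU$ of a paracompact space $Y$, the zig-zag~\eqref{eq: comparision maps} gives us canonical maps
\[
N(\calU) \xleftarrow{\;\;} Y_\calU \xrightarrow{\;\;} Y.
\]
First I would invoke the first assertion of Theorem~\ref{thm: htp equivalence for nerve of good cover}, which, using only that $\calU$ is locally finite and that $Y$ is paracompact, guarantees that the right-hand map $Y_\calU \to Y$ is a homotopy equivalence. Then, using the additional hypothesis that $\calU$ is good, the second assertion of the same theorem tells us that the left-hand map $Y_\calU \to N(\calU)$ is also a homotopy equivalence.

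Since homotopy equivalence is an equivalence relation on topological spaces, one can choose a homotopy inverse $g \colon Y \to Y_\calU$ of $Y_\calU \to Y$ and compose it with $Y_\calU \to N(\calU)$ to obtain a homotopy equivalence $Y \to N(\calU)$. This is the desired conclusion.

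There is no real obstacle here: the entire substance of the nerve lemma is packaged inside Theorem~\ref{thm: htp equivalence for nerve of good cover}, and the corollary is nothing more than the observation that a zig-zag of homotopy equivalences can be composed into a single homotopy equivalence. The only subtlety worth flagging is the convention that ``open cover'' in this paper means that the interiors of the $U_i$ cover $Y$; local finiteness then ensures numerability on the paracompact space $Y$, which is what lets us apply Theorem~\ref{thm: htp equivalence for nerve of good cover} in the stated generality.
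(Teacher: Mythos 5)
Your argument is correct and is exactly how the paper obtains the corollary: it is stated there as an immediate consequence of Theorem~\ref{thm: htp equivalence for nerve of good cover}, composing the zig-zag $N(\calU)\leftarrow Y_\calU\rightarrow Y$ of homotopy equivalences. Your remark about the paper's convention on open covers and numerability matches the discussion preceding that theorem as well.
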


Next we record a relative version of the previous result. The method of its proof will be used in a slightly more complicated context in Step 4 of the proof of Theorem~\ref{thm:HV}.

\begin{lemma}[Relative nerve lemma]\label{lem: relative nerve lemma}
Let $A\subset X$ be a subspace of a paracompact Hausdorff space~$X$ such that $A\hookrightarrow X$ is a cofibration. Let $\calU=\{U_i\}_{i\in I}$ be a locally finite open cover of $X$. Let $J\subset I$ be a subset, and
let $\calV=\{V_j\}_{j\in J}$ be a locally finite open cover of $A$ such that $V_j\subset U_j$ for each  $j\in J$.
    If $\calU$ and $\calV$ are good covers of $X$ and $A$, respectively, then $(X,A)$ and $(N(\calU), N(\calV))$ are homotopy equivalent as pairs.
\end{lemma}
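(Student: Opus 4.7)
The plan is to deduce the claim from the absolute nerve lemma (Theorem~\ref{thm: htp equivalence for nerve of good cover}) by producing a zigzag of pair-homotopy equivalences
\[
(X, A) \xleftarrow{\simeq} (X_\calU, A_\calV) \xrightarrow{\simeq} (N(\calU), N(\calV))
\]
and then applying Theorem~\ref{thm: cofibrations and homotopy equivalences} to the two squares that arise.

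First, I would invoke Theorem~\ref{thm: htp equivalence for nerve of good cover} for $X$ with cover $\calU$ and for $A$ with cover $\calV$. The subspace $A$ is closed in $X$ (cofibrations in Hausdorff spaces are closed inclusions, see Remark~\ref{rem: what are cofibrations}), so $A$ inherits paracompactness and both hypotheses of the absolute nerve lemma are met. This produces four homotopy equivalences
\[
X_\calU \xrightarrow{\simeq} X,\quad X_\calU \xrightarrow{\simeq} N(\calU),\quad A_\calV \xrightarrow{\simeq} A,\quad A_\calV \xrightarrow{\simeq} N(\calV).
\]
By the functoriality recorded in \eqref{eq: from homotopy colimit to homotopy colimit}, \eqref{eq: from nerve to nerve}, and \eqref{eq: comparision maps}, these four maps assemble into a strictly commutative diagram
\[
\begin{tikzcd}
A\arrow[hook]{d} & A_\calV\arrow{l}{\simeq}\arrow{r}{\simeq}\arrow[hook]{d} & N(\calV)\arrow[hook]{d}\\
X & X_\calU\arrow{l}{\simeq}\arrow{r}{\simeq} & N(\calU)
\end{tikzcd}
\]
whose vertical arrows are the natural inclusions.

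Next, I would verify that each of the three vertical maps is a cofibration. The leftmost inclusion $A \hookrightarrow X$ is a cofibration by hypothesis. The rightmost inclusion $N(\calV) \hookrightarrow N(\calU)$ is the inclusion of a simplicial subcomplex, hence a cofibration by Remark~\ref{rem: what are cofibrations}. For the middle inclusion $A_\calV \hookrightarrow X_\calU$, I would argue inductively along the skeletal filtration: letting $X_\calU^{(n)}$ (resp. $A_\calV^{(n)}$) denote the subspace built from the pieces $U_S \times \Delta^k$ (resp. $V_T \times \Delta^k$) with $k \le n$, the passage from the $(n-1)$-skeleton to the $n$-skeleton is realised as a pushout along the NDR pair inclusion $\bigsqcup U_S \times \partial\Delta^n \hookrightarrow \bigsqcup U_S \times \Delta^n$. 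The relative attachings that put $A_\calV^{(n)}$ inside $X_\calU^{(n)}$ are compatible with the NDR structure on $\partial\Delta^n \hookrightarrow \Delta^n$, and iterating these pushouts yields the desired cofibration.

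Finally, Theorem~\ref{thm: cofibrations and homotopy equivalences} applied to the left square of the diagram delivers a pair-homotopy equivalence $(X_\calU, A_\calV) \simeq (X, A)$, and applied to the right square delivers $(X_\calU, A_\calV) \simeq (N(\calU), N(\calV))$; composing yields the claim. The step I expect to be most delicate is the cofibration property of the middle inclusion $A_\calV \hookrightarrow X_\calU$, since one must see that the relative attachings across $V_S \hookrightarrow U_S$ genuinely assemble into an NDR pair at each skeletal stage. This is the technical heart of the argument and, as the authors advertise, is exactly what will be recycled in a more intricate context in Step~4 of the proof of Theorem~\ref{thm:HV}.
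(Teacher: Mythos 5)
Your overall strategy -- the strictly commutative zigzag $(X,A)\leftarrow (X_\calU, A_\calV)\rightarrow (N(\calU),N(\calV))$ followed by two applications of Theorem~\ref{thm: cofibrations and homotopy equivalences} -- is exactly the paper's, but there is a genuine gap at the step you yourself flag as delicate: the claim that the middle inclusion $A_\calV\hookrightarrow X_\calU$ is a cofibration. The paper explicitly declines to assert this (``We do not know whether $j$ is a cofibration in general''), and your skeletal induction does not establish it. To pass from the $(n-1)$-skeleton to the $n$-skeleton \emph{relatively}, you would need the inclusion $V_S\times\Delta^n\cup U_S\times\partial\Delta^n\hookrightarrow U_S\times\Delta^n$ to be a cofibration for each index set $S$, and by the product/pushout formalism for NDR pairs this requires $(U_S,V_S)$ itself to be an NDR pair. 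Nothing in the hypotheses gives this: the only assumption relating $\calV$ to $\calU$ is the bare containment $V_j\subset U_j$. The sets $V_j$ need not equal $U_j\cap A$, need not be open in $X$ (the covers here are only required to cover by interiors), and even if they were of the form $U_j\cap A$, the global cofibration $A\hookrightarrow X$ does not hand you a levelwise NDR structure on each $(U_S,V_S)$ -- the retracting homotopy for $(X,A)$ has no reason to preserve $U_S$. So the phrase ``compatible with the NDR structure on $\partial\Delta^n\hookrightarrow\Delta^n$'' is precisely where the argument breaks; Theorem~\ref{thm: cofibrations and homotopy equivalences} cannot then be applied to your two squares as they stand.

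The repair is the one the paper uses: do not try to show $j\colon A_\calV\to X_\calU$ is a cofibration, but factor it through its mapping cylinder, $A_\calV\hookrightarrow \cyl(j)\xrightarrow{\simeq} X_\calU$, and replace $X_\calU$ by $\cyl(j)$ in the middle column. The resulting diagram still commutes strictly (the cylinder collapse restricted to the top copy of $A_\calV$ is $j$, so the composites $\cyl(j)\to X_\calU\to X$ and $\cyl(j)\to X_\calU\to N(\calU)$ restrict correctly), the middle vertical map is now an honest cofibration, and the horizontal maps out of $\cyl(j)$ remain homotopy equivalences. With this one modification the rest of your argument -- two applications of Theorem~\ref{thm: cofibrations and homotopy equivalences} and composition of pair equivalences -- goes through verbatim. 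Note also that the same mapping-cylinder device, not a cofibration proof, is what gets recycled in Step~4 of the proof of Theorem~\ref{thm:HV}.
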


\begin{proof}
	Consider the following commutative diagram involving the maps~\eqref{eq: from homotopy colimit to homotopy colimit}, \eqref{eq: from nerve to nerve} and~\eqref{eq: comparision maps}.
	\begin{equation*}
		\begin{tikzcd}
		A\arrow[d, hook] & A_\calV\arrow[d]\arrow[l, "\simeq"'] \ar[r, "\simeq"] & N(\calV)\arrow[d, hook]\\
		X& X_\calU\arrow[l, "\simeq"]\arrow[r,"\simeq"'] & N(\calU)
		\end{tikzcd}
	\end{equation*}
	By Remark~\ref{rem: what are cofibrations} and Theorem~\ref{thm: htp equivalence for nerve of good cover} the hooked arrows are cofibrations and horizontal arrows are homotopy equivalences. 	
	Denote the middle vertical map by~ $j$. We do not know whether $j$
	is a cofibration in general. But we can replace $j$ by a cofibration via the mapping cylinder $\cyl(j)$, i.e.~we obtain a factorization of $j$
	\[ A_\calV\hookrightarrow \cyl(j)\xrightarrow{\simeq} X_\calU\]
	into a cofibration and a homotopy equivalence. So we obtain a commutative diagram with horizontal homotopy equivalences and vertical cofibrations: 	\begin{equation*}
		\begin{tikzcd}
	     A\arrow[d,hook] & A_\calV\arrow[d,hook]\arrow[l,"\simeq"'] \arrow[r,"\simeq"] & N(\calV)\arrow[d,hook]\\
		X& \cyl(j)\arrow[l,"\simeq"']\arrow[r,"\simeq"] & N(\calU)
		\end{tikzcd}
	\end{equation*}
	By Theorem~\ref{thm: cofibrations and homotopy equivalences} the left and the middle vertical inclusions are homotopy equivalent as pairs of spaces. Similarly, the left and the right vertical inclusions are homotopy equivalent as pairs of spaces. So the same holds for the left and right vertical inclusion which finishes the proof.
\end{proof}

\section{A result for Hadamard manifolds} \label{sec:hadamard}


The main result of this section, Theorem~\ref{thm:HV}, is a refinement of a variant of \cite[Theorem 7.4]{hv}.
First we will introduce some notation and conventions that will be used throughout the paper.

For a Riemannian manifold $M$ and a point $x\in M$ we denote the injectivity radius at $x$ of $M$ by $\text{InjRad}_M(x)$.
For $\epsilon>0$ we let
\[
 M_{\ge\epsilon}:=\{x\in M:\text{InjRad}_M(x)\ge\epsilon/2\}~\text{and}~M_{<\epsilon}:=M\setminus M_{\ge\epsilon}.
\]
These are called the {\em $\epsilon$-thick part} and the {\em $\epsilon$-thin part} of $M$, respectively.
For a subset $A\subset M$ and $\tau>0$ we let 
\[(A)_\tau = \{ x\in M : d(x,A)\le \tau\}\]
denote the $\tau$-neighborhood of $A$. 
Fixing a universal cover $X$ of $M$, for a subset $A\subset M$ we denote by $\tilde A$ the pre-image in $X$ under the covering map\footnote{Caution: $\tilde A$ does not denote a universal cover of $A$.}.
With regard to the deck transformation action of $\Gamma=\pi_1(M)$ on $X$ the set $\tilde A$ is $\Gamma$-invariant.
For $\gamma\in\Gamma$ we denote by $d_\gamma$ the displacement function $x\mapsto d(x,\gamma\cdot x)$
and let $\{d_\gamma<\epsilon\}=\{x:d_\gamma(x)<\epsilon\}$ denote its $\epsilon$-sub-level set. 
Recall that a {\em Hadamard space} is a simply connected complete Riemannian manifold of non-positive sectional curvature.
If $X$ is a Hadamard space then the metric $d:X\times X\to [0,\infty)$ is convex and smooth outside the diagonal. Therefore
the $\epsilon$-sub-level sets are convex and, by the implicit function theorem, have smooth boundary.
Finally, observe the equation
$$\tilde M_{<\epsilon}=\cup_{\gamma\in\Gamma\setminus\{1\}}\{d_\gamma<\epsilon\}.$$

\begin{theorem}\label{thm:HV}
Let $M$ be a $d$-dimensional complete Riemannian manifold of finite volume with sectional curvature varying in $[-1,0]$. Let $X$ be its universal covering
which is a Hadamard manifold. Let $\Gamma=\pi_1(M)$, so $M=\Gamma\backslash X$.
Fix constants $0<\epsilon<\epsilon_0$.
Let $M_+\subset  M_{\ge\epsilon}$ be a compact $d$-dimensional submanifold with boundary.
Let $M_-=M\setminus M_+$.
Suppose that there is a conjugation invariant subset $\Gamma'\subset \Gamma$ and a conjugation invariant  assignment of numbers $\{\epsilon_\gamma\}_{\gamma\in\Gamma'}\subset [\epsilon,\epsilon_0]$ such that
the pre-image $\tilde M_-\subset X$ of $M_-$ in $X$ is given by
$\tilde M_-=\cup_{\gamma\in\Gamma'}\{ d_\gamma<\epsilon_\gamma\}$.
Furthermore, suppose that for every point $x\notin \tilde M_-$ the group
$$
 \langle \gamma\in \Gamma':d_\gamma(x)\le 4\epsilon_\gamma\rangle
$$
is commutative.
Then $(M_+,\partial M_+)$ is homotopy equivalent to a $(D,c\cdot\vol{M_+})$ simplicial pair, where the constants $D,c>0$ depend only on $d$, $\epsilon_0$ and $\epsilon$.
\end{theorem}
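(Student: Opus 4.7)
The plan is to adapt the nerve-of-a-good-cover strategy of \cite{hv}*{Theorem~7.4}, upgraded so as to produce a homotopy equivalence of \emph{pairs} via the relative nerve lemma (Lemma~\ref{lem: relative nerve lemma}). Concretely, we will build locally finite good open covers $\calU$ of $M_+$ and $\calV$ of $\partial M_+$, with $\calV$ indexed by a subset of the index set of $\calU$ and each $V_j\subset U_j$, satisfying $|\calU|\le c\cdot\vol(M_+)$ and multiplicity bounded by some constant $D$ depending only on $d,\epsilon,\epsilon_0$. Lemma~\ref{lem: relative nerve lemma} will then yield $(M_+,\partial M_+)\simeq (N(\calU), N(\calV))$ as pairs, and setting $\scrR:=N(\calU)$ and $\scrR_0:=N(\calV)$ produces the desired $(D,c\cdot\vol(M_+))$-simplicial pair.

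For the cover construction, I would fix a small parameter $\delta>0$ depending on $d,\epsilon,\epsilon_0$, and pick a maximal $\delta$-separated set $P\subset M_+$ arranged so that $P\cap\partial M_+$ is a maximal $\delta$-separated set in $\partial M_+$ (by first choosing the boundary net and then extending inwards through a collar of $\partial M_+$). Setting $U_p:=B(p,2\delta)\cap M_+$ for $p\in P$ and $V_p:=U_p\cap\partial M_+$ for $p\in P\cap\partial M_+$ produces covers of $M_+$ and $\partial M_+$, respectively. The cardinality bound $|P|\le c\cdot\vol(M_+)$ follows from a standard packing argument: the balls $B(p,\delta/2)$ are pairwise disjoint and, thanks to $M_+\subset M_{\ge\epsilon}$, embedded in $M$, and they have volume bounded below by a constant depending only on $d$ and $\delta$ (Bishop--G\"unther, using the lower curvature bound). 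The same packing argument shows that at most $D$ points of $P$ lie within distance $4\delta$ of any fixed point, giving the degree bound.

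The technical crux is to verify that $\calU$ and $\calV$ are good covers. Given a nonempty intersection $U_{p_0}\cap\cdots\cap U_{p_k}$, pick $x$ in it, a lift $\tilde x\in X$, and lifts $\tilde p_j$ with $d(\tilde x,\tilde p_j)<2\delta$. The intersection $\tilde V:=\bigcap_j B(\tilde p_j,2\delta)\subset X$ is convex in the Hadamard manifold $X$, hence contractible, and projects into the downstairs intersection. One must argue that this projection is a homeomorphism onto the full intersection; equivalently, that no nontrivial deck transformation $\gamma$ carries one sheet of $\tilde V$ onto another. If such a $\gamma$ existed, its displacement $d_\gamma(\tilde x)$ would be bounded by a fixed multiple of $\delta$; choosing $\delta$ sufficiently small in terms of $\epsilon,\epsilon_0$ forces $\gamma\in\Gamma'$ and $d_\gamma(\tilde x)\le 4\epsilon_\gamma$, so $\gamma$ lies in the commutative subgroup supplied by the hypothesis. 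Standard Hadamard-manifold geometry (commuting semisimple isometries share a totally geodesic min-set, and displacement functions are convex) then rules out nontrivial such $\gamma$ whose translate of $\tilde V$ meets $\tilde V$, giving the required injectivity. The same argument with $\partial M_+$ in place of $M_+$ yields goodness of $\calV$.

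Finally, $\partial M_+\hookrightarrow M_+$ is a cofibration (Remark~\ref{rem: what are cofibrations}), so Lemma~\ref{lem: relative nerve lemma} applies and delivers the desired equivalence of pairs. The main obstacle is the goodness step: near $\partial M_+$, displacements of short deck transformations live in the full range $[\epsilon,\epsilon_0]$ rather than being controlled by $\epsilon$ alone, so a naive injectivity-of-exponential argument is inadequate when the cover scale $\delta$ is comparable to $\epsilon_0$. It is precisely there that the commutativity hypothesis enters, reducing the analysis to the interaction between a convex subset of a Hadamard manifold and an abelian group of isometries, which can be settled by min-set and convexity arguments for displacement functions.
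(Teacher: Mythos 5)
There is a genuine gap, and it is exactly at the step you yourself flag as the crux: goodness of the covers. Your cover elements are $U_p=B(p,2\delta)\cap M_+$ and $V_p=U_p\cap\partial M_+$, but in the goodness argument you analyze the lifted set $\bigcap_j B(\tilde p_j,2\delta)\subset X$, which is \emph{not} a lift of $\bigcap_j U_{p_j}$: the actual lift is $\bigcap_j B(\tilde p_j,2\delta)\cap\tilde M_+$, i.e.\ a convex intersection of balls with the union of convex sets $\{d_\gamma<\epsilon_\gamma\}$ removed. Such a set need not be contractible or even connected (already a ball centered outside a convex set, minus that convex set, can be disconnected), and near $\partial M_+$, where several sub-level sets interact, this is precisely the situation. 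The fix you sketch addresses a different and much easier issue: injectivity of the covering projection on scale $\delta$ follows from $M_+\subset M_{\ge\epsilon}$ alone once $\delta\ll\epsilon$, with no commutativity needed, while the non-convexity of the intersection with $M_+$ (resp.\ with the merely piecewise-smooth hypersurface $\partial M_+$) is untouched. Moreover, the elements of $\Gamma'$ may be parabolic, so the proposed ``commuting semisimple isometries share a totally geodesic min-set'' argument is not available in general.

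The paper's proof is organized to avoid ever taking nerves of sets of the form ``ball intersected with $M_+$''. The commutativity hypothesis is exploited through Lemma~\ref{lem: angle of sublevel sets} (gradients of the distance functions to two sub-level sets meet at a non-obtuse angle) and Lemma~\ref{lem: direction}, which permit the construction of a vector field whose flow gives a strong deformation retract of $M_+$ onto the shrunken thick part $M_+'$ lying at distance $\ge\epsilon/2$ from $M_-$, restricting to a homeomorphism $\partial M_+\to\partial M_+'$ and stabilizing small balls (Lemma~\ref{lem: deformation retract via vector field}); together with Lemma~\ref{closeN} this replaces $(M_+,\partial M_+)$ by a pair $(N_+,N_0)$, where $N_+$ is a union of honest balls $B(z,(b+1)\delta)$ centered at a $\delta$-net far from $M_-$ and $N_0$ is its intersection with $\overline{(M_-)_{\epsilon/2}}$. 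The cover elements are then genuinely convex (balls, and balls intersected with $\epsilon/2$-neighborhoods of the convex sub-level sets), so goodness is immediate, and the relative nerve argument (carried out with mapping cylinders, in the spirit of Lemma~\ref{lem: relative nerve lemma}) applies. Your counting and degree bounds via Bishop--G\"unther packing are fine and match the paper, and invoking the relative nerve lemma for the cofibration $\partial M_+\hookrightarrow M_+$ is legitimate; but without an analogue of the deformation-retract step your covers are not good, so the proposal as written does not go through.
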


The goal of this section is a proof of Theorem~\ref{thm:HV}.
We start with a preliminary subsection and then conclude the proof of Theorem~\ref{thm:HV} in four steps.
The proof relies heavily on results and constructions given in \S3, \S4 and \S7 of~\cite{hv}.
We recall that in \cite{hv} it is globally assumed that $X$ is a symmetric space of non-compact type.
However, \S3, \S4, \S7  only rely on the assumption that $X$ is a Hadamard manifold. Accordingly, we are free to use the results of these sections in~\cite{hv}.

\subsection{An estimate for essential volume}\label{sub: essential volume}

We define
\begin{equation}\label{eq: multiplicity function}
N(d,r,R):=\vol_{\mathbb{H}^d}B(o,R+r/2)/\vol_{\mathbb{E}^d}(B(0,r/2))
\end{equation}
as the ratio of volumes of the corresponding balls in the hyperbolic $d$-space $\mathbb{H}^d$ and the Euclidean $d$-space $\mathbb{E}^d$,
respectively.

\begin{lemma}\label{lem:ess-vol}
Let $X$ be a $d$-dimensional Hadamard manifold with sectional curvature varying in~$[-1,0]$.
For $0<r<R$ and any $x\in X$, the cardinality of an $r$-discrete subset of $B_X(x,R)$ is bounded above by $N(d,r,R)$.
\end{lemma}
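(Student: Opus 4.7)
The plan is a standard volume-packing argument, comparing volumes in $X$ to the model spaces $\mathbb{E}^d$ and $\mathbb{H}^d$ via the two sectional curvature bounds.

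First I would set up the packing. Let $S \subset B_X(x,R)$ be an $r$-discrete subset, i.e.\ $d(y,y')\ge r$ for distinct $y,y'\in S$. Then the open metric balls $\{B_X(y, r/2)\}_{y\in S}$ are pairwise disjoint, and by the triangle inequality each is contained in $B_X(x, R+r/2)$. Therefore
\[
\sum_{y\in S} \vol\bigl(B_X(y, r/2)\bigr) \;\le\; \vol\bigl(B_X(x, R+r/2)\bigr).
\]

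Next I would bound each summand from below and the right-hand side from above using comparison geometry. Because $X$ is a Hadamard manifold, the exponential map at any point is a diffeomorphism (infinite injectivity radius) and there are no conjugate points, so G\"unther's volume comparison theorem applies without a radius restriction. Since the sectional curvature is $\le 0$, G\"unther yields
\[
\vol\bigl(B_X(y, r/2)\bigr) \;\ge\; \vol_{\mathbb{E}^d}\bigl(B(0, r/2)\bigr)
\]
for every $y\in X$. Dually, since the sectional curvature is $\ge -1$, the Bishop absolute volume comparison theorem gives
\[
\vol\bigl(B_X(x, R+r/2)\bigr) \;\le\; \vol_{\mathbb{H}^d}\bigl(B(o, R+r/2)\bigr).
\]

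Combining the three inequalities gives
\[
\lvert S\rvert \cdot \vol_{\mathbb{E}^d}\bigl(B(0, r/2)\bigr) \;\le\; \vol_{\mathbb{H}^d}\bigl(B(o, R+r/2)\bigr),
\]
i.e.\ $\lvert S\rvert \le N(d,r,R)$ by the definition \eqref{eq: multiplicity function}. There is no real obstacle here; the only point requiring care is to invoke G\"unther in its form valid on all of a Hadamard manifold (no cut locus issue since $X$ has none), which is why the hypothesis that $X$ be simply connected with nonpositive curvature is essential.
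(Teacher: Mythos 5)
Your proposal is correct and follows essentially the same route as the paper: disjoint $r/2$-balls packed into $B(x,R+r/2)$, with the Euclidean lower bound (Bishop--G\"unther) and the hyperbolic upper bound (Bishop--Gromov) on ball volumes, exactly as in Theorem~\ref{thm:comparison}. No issues.
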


Indeed, the $r/2$ balls centered at the points of such an $r$-discrete set are disjoint and, on the other hand, contained in the $(R+r/2)$ ball around $x$. Therefore, the lemma is a straightforward consequence of the following volume estimate which follows by combining the Bishop--Gunther (lower bound) and the Bishop--Gromov (upper bound) comparison theorems:

\begin{theorem}[\cite{bishop+gromov}*{Theorem~3.101 on p.~169 and Theorem~4.19 on p.~214}]\label{thm:comparison}
Let $X$ be a Hadamard manifold of dimension $d$ and sectional curvature varying in the segment $[-1,0]$. For every  $R>0$ and every $x\in X$ we have
\[ \vol_{\mathbb{E}^d}(B(0,R)) \leq \vol_X(B(x,R)) \leq \vol_{\mathbb{H}^d}B(o,R). \]
\end{theorem}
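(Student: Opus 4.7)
The plan is to pass to normal coordinates at $x$ and compare volume densities with the constant-curvature model spaces. Since $X$ is Hadamard, the Cartan--Hadamard theorem ensures that $\exp_x\colon T_xX\to X$ is a diffeomorphism; in particular there are no conjugate or focal points, and polar coordinates cover $B_X(x,R)$ globally without overlap. Writing
\[
  \vol_X(B(x,R))=\int_0^R\!\!\int_{S^{d-1}}J(t,\theta)\,d\sigma(\theta)\,dt,
\]
where $J(t,\theta)$ is the Jacobian density of $\exp_x$ at radius $t$ in the direction $\theta$, the theorem reduces to the pointwise two-sided comparison
\[
  t^{d-1}\le J(t,\theta)\le(\sinh t)^{d-1},
\]
since integrating against $d\sigma\,dt$ then recovers $\vol_{\mathbb{E}^d}(B(R))$ on the left and $\vol_{\mathbb{H}^d}(B(R))$ on the right as the extremal cases of constant sectional curvature $0$ and $-1$.

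For the upper bound I would invoke the standard Bishop--Gromov argument, which uses only the Ricci lower bound $\mathrm{Ric}_X\ge -(d-1)$ implied by $\sec_X\ge -1$. Concretely, $J(t,\theta)$ satisfies a Riccati inequality whose ``curvature term'' is controlled by the sectional curvatures along $\gamma_\theta$; the lower curvature bound forces this inequality to be dominated by the corresponding equation in $\mathbb{H}^d$. Since both densities vanish to the same order at $t=0$, Sturm-type comparison yields $J(t,\theta)\le(\sinh t)^{d-1}$ on all of $[0,R]$.

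The lower bound is Günther's inequality and is the more delicate half. One encodes $J(t,\theta)$ via the shape operator of the geodesic spheres centred at $x$; this shape operator satisfies a matrix Riccati equation whose curvature term is bounded above by $0$ under the hypothesis $\sec_X\le 0$. In a Hadamard manifold there are no focal points along radial geodesics, so the shape operator remains positive definite throughout $[0,R]$ and the comparison with the Euclidean model (curvature $0$, density $t^{d-1}$) may be carried out on the full interval, giving $J(t,\theta)\ge t^{d-1}$.

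The main obstacle is organizing the lower bound correctly: the naive estimate $|Y_1\wedge\cdots\wedge Y_{d-1}|\le\prod_i|Y_i|$ on a wedge product of Jacobi fields points in the wrong direction, so Günther's inequality cannot be deduced simply from pointwise lower bounds on individual Jacobi field norms. The Riccati/shape-operator formulation circumvents this, and the absence of focal points---guaranteed by the Hadamard hypothesis---is precisely what makes the matrix Riccati comparison valid over the entire interval $[0,R]$, rather than only up to the first focal time.
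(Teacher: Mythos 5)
Your argument is correct and coincides in substance with the paper's: the paper proves nothing here, but simply cites the Bishop--Gromov and Bishop--G\"unther comparison theorems (Gallot--Hulin--Lafontaine, Theorems~3.101 and~4.19), and your polar-coordinate/Riccati sketch --- upper bound from the Ricci lower bound, lower bound via the shape-operator form of G\"unther's inequality, with Cartan--Hadamard ensuring no conjugate points so that both comparisons hold on all of $[0,R]$ --- is exactly the standard proof of those two cited results. Nothing further is needed.
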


\begin{corollary}\label{cor:2.5}
Let $M=\Gamma\backslash X$ be a $d$-dimensional complete Riemannian manifold of sectional curvature varying in the segment $[-1,0]$. Let $\epsilon>0$ and $x\in \widetilde M_{\ge\epsilon}$. Then for every $R>0$ we have:
$$
 \#\{\gamma\in\Gamma: d_\gamma(x)\le  R\}\le N(d,\epsilon,R).
$$
\end{corollary}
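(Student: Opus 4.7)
The plan is to reduce the statement to a direct application of Lemma~\ref{lem:ess-vol}, using that the $\Gamma$-action on $X$ by deck transformations is free and that the hypothesis $x\in\widetilde M_{\ge\epsilon}$ translates into an $\epsilon$-discreteness property of the orbit $\Gamma\cdot x$. So the entire task is to identify the set we want to count with an $\epsilon$-discrete subset of a ball of radius $R$ around $x$.

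First I would unpack the injectivity radius hypothesis. Since $\Gamma$ acts freely on $X$ as the deck transformation group of the covering $X\to M=\Gamma\backslash X$, saying that the injectivity radius at the image of $x$ in $M$ is at least $\epsilon/2$ is equivalent to saying
\[ d(x,\gamma x)\ge \epsilon \quad\text{for every } \gamma\in \Gamma\setminus\{1\}. \]
Next, consider the orbit map $\gamma\mapsto \gamma x$ restricted to the set $S:=\{\gamma\in\Gamma:d_\gamma(x)\le R\}$. By freeness this map is injective, so $|S|=|S\cdot x|$, and by the very definition of $S$ its image lies in $B_X(x,R)$. For any two distinct elements $\gamma_1,\gamma_2\in S$, setting $\gamma:=\gamma_1^{-1}\gamma_2\ne 1$, we get
\[ d(\gamma_1 x,\gamma_2 x)=d(x,\gamma x)\ge\epsilon, \]
so $S\cdot x$ is an $\epsilon$-discrete subset of $B_X(x,R)$.

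Finally I would invoke Lemma~\ref{lem:ess-vol} with $r=\epsilon$, yielding $|S|=|S\cdot x|\le N(d,\epsilon,R)$, which is exactly the claim. The only minor point to watch is the degenerate case $R<\epsilon$, in which $S$ consists only of the identity (so $|S|=1$) while $N(d,\epsilon,R)\ge 1$ by the comparison theorem (Theorem~\ref{thm:comparison}); this case is therefore trivial. There is no substantive obstacle in this proof: once the injectivity radius condition is rephrased in terms of displacement, the corollary is immediate from Lemma~\ref{lem:ess-vol}.
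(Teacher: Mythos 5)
Your proof is correct and follows essentially the same route as the paper: identify $\{\gamma x : \gamma\in\Gamma,\ d_\gamma(x)\le R\}$ as an $\epsilon$-discrete subset of $B_X(x,R)$ and invoke Lemma~\ref{lem:ess-vol}. You simply spell out the injectivity-radius translation and the freeness/injectivity of the orbit map, which the paper leaves implicit.
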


\begin{proof}
Indeed, the set $\{\gamma\cdot x: \gamma\in \Gamma,~d_\gamma(x)\le R\}$ is $\epsilon$-discrete and contained in the ball ${B}(x,R)$.
\end{proof}

\subsection{Proof of Theorem~\ref{thm:HV}}\label{sub: proof of thm:HV}

\subsection*{Step 1: Obtuse angles between sub-level sets.}

The goal of the first step is to show the following variant of \cite{hv}*{Lemma 7.1}.
 
\begin{lemma}\label{lem: angle of sublevel sets}
Let $x\in (\tilde M_-)_\epsilon\setminus\tilde M_-$. Consider the commutative subgroup
\[A:=\langle \gamma\in \Gamma':d_\gamma(x)\le 4\epsilon_\gamma\rangle<\Gamma.\]
Then we have
\[
 \nabla d(\cdot,\{d_\alpha<\epsilon_\alpha\})|_x \cdot \nabla d(\cdot,\{d_{\beta}<\epsilon_{\beta}\})|_x  \geq 0
\]
for every $\alpha,\beta\in A\cap\Gamma'$.
\end{lemma}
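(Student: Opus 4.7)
The plan is to reformulate the lemma as a non-negativity statement for an inner product of gradients of displacement functions, and then derive a contradiction from the commutativity of $A$ via a first-variation argument, adapting the symmetric-space strategy of \cite{hv}*{Lemma~7.1} to the variable-curvature Hadamard setting.

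\textbf{Reduction.} First I would observe that since $X$ is Hadamard and every $\gamma \in \Gamma'$ acts by isometries, the displacement function $d_\gamma$ is convex on $X$ and smooth outside $\Fix(\gamma)$. Each sublevel set $C_\gamma := \{d_\gamma < \epsilon_\gamma\}$ is therefore open and convex, with smooth hypersurface boundary (by the implicit function theorem, as $\epsilon_\gamma > 0$). For $x \notin \overline{C_\gamma}$ the nearest-point projection $p_\gamma := \pi_{\overline{C_\gamma}}(x)$ is unique, and $\nabla d(\cdot, C_\gamma)|_x$ equals the unit tangent vector at $x$ of the geodesic from $p_\gamma$ to $x$. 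This vector is a positive scalar multiple of $\nabla d_\gamma|_x$, since both are outward unit normals to the convex level set $\{d_\gamma = d_\gamma(x)\}$. Consequently the lemma reduces to proving $\nabla d_\alpha|_x \cdot \nabla d_\beta|_x \geq 0$.

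\textbf{Main argument.} The decisive consequence of commutativity of $A$ is that each $\delta \in A$ preserves every sublevel set $C_\gamma$ with $\gamma \in A$: for $\gamma,\delta \in A$ one has $d_\gamma(\delta x) = d(\delta x, \gamma \delta x) = d(\delta x, \delta \gamma x) = d_\gamma(x)$. I would then argue by contradiction: if $\nabla d_\alpha|_x \cdot \nabla d_\beta|_x < 0$, a first-variation computation combined with the triangle-inequality bound $d_{\alpha\beta}(x) \leq d_\alpha(x) + d_\beta(x)$ should show that $d_{\alpha\beta}(x) < d_\alpha(x) + d_\beta(x)$ by an amount quantitatively controlled by the obtuse angle. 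Iterating this observation over the generators of $A$, which lie in $\Gamma'$ and satisfy $d_\gamma(x) \leq 4\epsilon_\gamma$, and exploiting the conjugation invariance of $\Gamma'$, would produce an element of $\Gamma'$ whose displacement at $x$ drops below its threshold $\epsilon_\gamma$, contradicting $x \notin \tilde M_-$.

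\textbf{Main obstacle.} The hardest part will be making the first-variation and ``closure-under-products'' arguments quantitatively uniform in the variable-curvature Hadamard setting, where the explicit structure theory of symmetric spaces used in \cite{hv} is unavailable. Instead one must proceed intrinsically via Rauch-type comparison, using the uniform curvature bound and the upper bound $\epsilon_\gamma \leq \epsilon_0$ to absorb the error terms in the first-variation expansion of $d_{\alpha\beta}$. A secondary subtlety is that products of elements of $\Gamma'$ need not lie in $\Gamma'$, so some care is required in selecting the particular element whose displacement violates its threshold; this is where the factor of $4$ in the hypothesis $d_\gamma(x) \leq 4\epsilon_\gamma$ provides the slack needed to keep the iterated products within the bounded-displacement generating set.
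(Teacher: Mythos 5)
Your proposal contains a genuine gap in the reduction step, and the subsequent argument does not match the structure of the lemma.

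\textbf{The reduction is false.} You claim that $\nabla d(\cdot,C_\gamma)|_x$ is a positive scalar multiple of $\nabla d_\gamma|_x$, arguing that both are outward normals to the level set $\{d_\gamma = d_\gamma(x)\}$. The vector $\nabla d(\cdot,C_\gamma)|_x$ is the unit tangent at $x$ of the geodesic from $p_\gamma := \pi_{\overline{C_\gamma}}(x)$ to $x$; equivalently, it is the outward normal to the \emph{distance} level set $\{d(\cdot,C_\gamma) = d(x,C_\gamma)\}$ through $x$. On the other hand, $\nabla d_\gamma|_x$ is normal to the \emph{displacement} level set $\{d_\gamma = d_\gamma(x)\}$ through $x$. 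These are two different convex hypersurfaces passing through $x$, and they are \emph{not} tangent in general: the parallel set $(C_\gamma)_t$ is not a sublevel set of $d_\gamma$. At the nearest point $p_\gamma \in \partial C_\gamma$ the two normals do agree (this is exactly the remark following Lemma~\ref{lem:big-angle} in the paper), but there is no reason for them to agree once one moves out to $x$. Indeed, the paper explicitly points out that the case $x\in\partial\{d_\alpha<\epsilon_\alpha\}\cap\partial\{d_\beta<\epsilon_\beta\}$ is the easy boundary case, and the entire content of Lemma~\ref{lem: angle of sublevel sets} is to treat points $x$ off the boundaries. The paper handles this by forming the inflated set $\Omega=\overline{(\{d_\beta\le\epsilon_\beta\})_{t_\beta-t_\alpha}}$ (still $\alpha$-invariant by commutativity, and still convex), establishing the obtuse angle on $\partial\{d_\alpha\le\epsilon_\alpha\}\cap\partial\Omega$ via Lemma~\ref{lem:big-angle}, and then propagating it out to $x$ using the angle monotonicity result \cite[Lemma 7.2]{hv} together with Lemma~\ref{clm3.8}. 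Your reduction bypasses all of this machinery, but it is exactly this machinery that the statement requires.

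\textbf{The contradiction argument misreads the lemma.} You propose to assume $\nabla d_\alpha|_x\cdot\nabla d_\beta|_x<0$ and derive a contradiction with $x\notin\tilde M_-$ by producing a product element of $\Gamma'$ with small displacement. But in Lemma~\ref{lem: angle of sublevel sets} the commutativity of $A$ is a \emph{hypothesis} (imported from the hypotheses of Theorem~\ref{thm:HV}), not something to be contradicted, and the conclusion is a direct consequence of that commutativity. Moreover your proposed mechanism — an angle-implies-strict-triangle-inequality estimate producing an element whose displacement violates its threshold — is not supplied with any actual inequality, and the element $\alpha\beta$ need not lie in $\Gamma'$ nor carry a defined threshold $\epsilon_{\alpha\beta}$. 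You acknowledge this as a ``secondary subtlety,'' but without the membership there is no contradiction to be had. Finally, your appeal to Rauch comparison and the curvature lower bound is not what the proof uses; the paper's argument is purely a CAT(0)/convexity argument (Lemma~\ref{lem:big-angle} uses only that nearest-point projection onto a convex set is $1$-Lipschitz and that $t\mapsto d(c(t),c'(t))$ is convex along geodesics), so no quantitative curvature pinching enters at this stage.
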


The special case where $x\in \partial \{d_\alpha<\epsilon_\alpha\}\cap\partial \{d_\beta<\epsilon_\beta\}$ follows directly from \cite[Lemma 7.1]{hv}. However, we will need the result for points which are not necessarily on the boundary. The proof relies on tools from \cite[\S 7]{hv}.

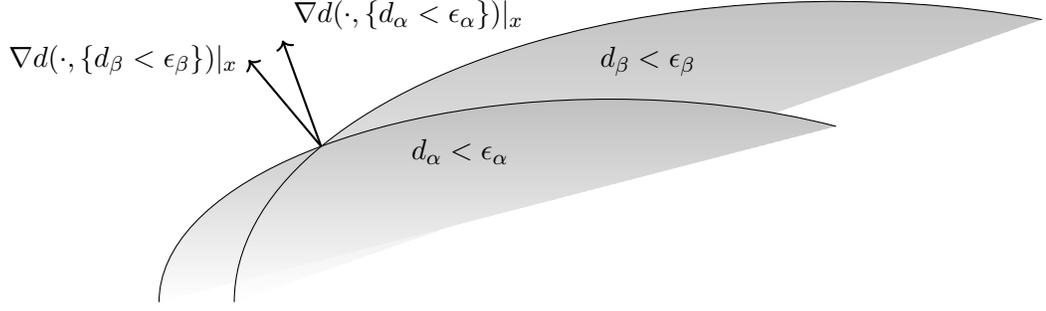
\begin{figure}[h]\label{fig:commuting}
    \centering
         \begin{tikzpicture}
     \shade [top color=gray!50] (0,0) arc (-180:-290:8 and 4);
     \shade [top color=gray!50] (-1,0) arc (-180:-300:6 and 2.7);

    \draw [name path=first] (0,0) arc (-180:-290:8 and 4) ;
    \draw [name path=second] (-1,0) arc (-180:-300:6 and 2.7);
    \path [name intersections={of=first and second, by=x}];
    \draw[->, thick] (x) -- +(130:1.5) node [left] {$\nabla d(\cdot,\{d_\beta<\epsilon_\beta\})|_x$};
    \draw[->, thick] (x) -- +(110:1.5) node [above right] {$\nabla d(\cdot,\{d_{\alpha}<\epsilon_{\alpha}\})|_x$};

    \node at (3,2) {$d_\alpha<\epsilon_\alpha$};
    \node at (5.5,3.2) {$d_\beta<\epsilon_\beta$};

    \end{tikzpicture}
\caption{If $\alpha$ and $\beta$ commute, the angle formed by the sub-level sets is obtuse.}

    \label{angles}
\end{figure}

\begin{remark}
Note that $A$ is not empty.
Indeed, because of $x\in (\tilde M_-)_\epsilon\setminus\tilde M_-$ we have $d(x,\tilde M_-)\leq\epsilon$,
so there exists at least one $\gamma\in\Gamma'$ such that
\begin{equation}\label{eq: distant from sublevel set}
x\in (\{d_\gamma<\epsilon_\gamma\})_\epsilon \subset \{d_\gamma<\epsilon_\gamma+2\epsilon\}
\subset \{d_\gamma<4\epsilon_\gamma\}.
\end{equation}
\end{remark}

The basic fact that stands behind 
Lemma \ref{lem: angle of sublevel sets}, is given in the following:

\begin{lemma}\label{lem:big-angle}
Let $X$ be a Hadamard manifold, $\Omega\subset X$ a convex subset with smooth boundary, and $\alpha$ an isometry of $X$ which preserves $\Omega$. Let $x\in \partial \Omega$ be a point on the boundary of $\Omega$, let $\hat f\in T_x(X)$ be the external normal to $\Omega$ at $x$. Then $\nabla d_\alpha |_x\cdot \hat f\ge 0$.
\end{lemma}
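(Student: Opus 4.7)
The plan is to argue via the nearest-point projection $\pi : X \to \Omega$, using its interaction with the isometry $\alpha$. The key mechanism is that $\alpha$-equivariance of $\pi$, combined with $\pi$ being $1$-Lipschitz, forces $d_\alpha$ to be locally minimized over $\Omega$ at each boundary point, so $d_\alpha$ must be non-decreasing as one moves outward from $\partial\Omega$.

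First I would record the two basic properties of $\pi$. Since $X$ is Hadamard and $\Omega$ is closed and convex, $\pi$ is well-defined and $1$-Lipschitz. Because $\alpha$ is an isometry preserving $\Omega$, the point $\alpha(\pi(y))$ lies in $\Omega$ and realizes $d(\alpha y, \Omega)$, so uniqueness of the nearest point gives $\pi\circ\alpha=\alpha\circ\pi$. Combining these two facts yields, for every $y \in X$, the key inequality
\[
 d_\alpha(y) = d(y,\alpha y) \;\geq\; d(\pi(y),\pi(\alpha y)) = d(\pi(y),\alpha\pi(y)) = d_\alpha(\pi(y)).
\]
In words: projecting onto $\Omega$ never increases $d_\alpha$.

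Next I would apply this inequality along the outward geodesic $\gamma(t):=\exp_x(t\hat f)$. Since $\hat f$ is the external unit normal to $\Omega$ at the smooth boundary point $x$, it is orthogonal to $T_x\partial\Omega$; by the standard characterization of the projection onto a convex subset of a Hadamard space (nearest point realized by the orthogonal geodesic), this forces $\pi(\gamma(t))=x$ for all $t\geq 0$. Substituting into the inequality above gives $d_\alpha(\gamma(t))\geq d_\alpha(x)$ for all $t\geq 0$, so the one-sided derivative at $t=0$ is non-negative:
\[
 \nabla d_\alpha|_x\cdot \hat f \;=\; \frac{d}{dt}\bigg|_{t=0^+} d_\alpha(\gamma(t)) \;\geq\; 0.
\]

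The proof is essentially self-assembling once one sees the role of $\pi$; there is no serious obstacle. The only mild subtleties are the $\alpha$-equivariance of $\pi$ and the identity $\pi(\gamma(t))=x$ on the outward ray, both of which are short standard arguments from Hadamard geometry. I expect the rest of \S3 to use this lemma by applying it with $\Omega=\{d_\gamma<\epsilon_\gamma\}$, recovering \cite{hv}*{Lemma 7.1} at boundary points and, more importantly, extending the obtuse-angle phenomenon to the slightly larger neighborhood needed for Lemma~\ref{lem: angle of sublevel sets}.
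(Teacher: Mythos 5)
Your proof is correct and follows essentially the same route as the paper: both use the nearest-point projection $\pi_\Omega$, its $1$-Lipschitz property and $\alpha$-equivariance, and the fact that the outward normal ray from $x$ projects back to $x$, to conclude $d_\alpha(\gamma(t))\ge d_\alpha(x)$ and hence $\nabla d_\alpha|_x\cdot\hat f\ge 0$. The only cosmetic difference is that the paper phrases the monotonicity via convexity of $t\mapsto d(c(t),\alpha c(t))$, which your direct one-sided derivative argument makes unnecessary.
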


Note that $\hat f=\nabla d(\cdot,\Omega)|_x$ and that the vectors $\nabla d_\alpha |_x$ and $\nabla d(\cdot,\{d_\alpha\le d_\alpha(x)\})|_x$ only differ by a positive scalar.

\begin{proof}[Proof of Lemma \ref{lem:big-angle}]
Let $\pi_\Omega$ be the nearest point projection from $X$ to $\Omega$. Let $c(t)$ be the geodesic ray starting at $x$ with $\dot{c}(0)=\hat f$, and let $c'(t)=\alpha\cdot c(t)$
(see Figure \ref{fig:distance}).
Note that $\pi_\Omega(c(t))=x$ and $\pi_\Omega(c'(t))=\alpha\cdot x$. Since the sectional curvature is non-positive $\phi(t):=d(c(t),c'(t))$ is a convex function. Since $\pi_\Omega$ is also a contraction $\phi(t)$ is monotonically non-decreasing. Thus,
\begin{multline*}
 0\le \frac{d}{dt}\phi(t)|_{t=0}=\frac{d}{dt}d(c(t),\alpha\cdot c(t))|_{t=0}\\=\frac{d}{dt}d_\alpha(c(t))|_{t=0}=\nabla d_\alpha|_x\cdot \dot{c}(0)=
 \nabla d_\alpha|_x\cdot\hat{f}.
\end{multline*}

\begin{figure}[h]\label{fig:distance}
    \centering
\begin{tikzpicture}[
    tangent/.style={
        decoration={
            markings,
            mark=
                at position #1
                with
                {
                    \coordinate (tangent point-\pgfkeysvalueof{/pgf/decoration/mark info/sequence number}) at (0pt,0pt);
                    \coordinate (tangent unit vector-\pgfkeysvalueof{/pgf/decoration/mark info/sequence number}) at (1,0pt);
                    \coordinate (tangent orthogonal unit vector-\pgfkeysvalueof{/pgf/decoration/mark info/sequence number}) at (0pt,1);
                }
        },
        postaction=decorate
    },
    use tangent/.style={
        shift=(tangent point-#1),
        x=(tangent unit vector-#1),
        y=(tangent orthogonal unit vector-#1)
    },
    use tangent/.default=1
]

       \draw[tangent=0.01, tangent=0.06] (0,0) ellipse [x radius=2.5,  y radius=1];
       \fill [ color=gray!20] (0,0) ellipse [x radius=2.5,  y radius=1];
       \draw[use tangent] (0,0)--(-0.4,0);
       \draw[thick, use tangent] (0,0)--(0,-2);
       \draw[use tangent] (-0.4,0) arc [start angle=180, end angle=270, radius=0.4];
       \fill[use tangent] (-0.15, -0.15) circle [radius=0.03];
       \node[use tangent, below left] (0,0) {$x$};
       \draw[use tangent=2] (0,0)--(-0.4,0);
       \draw[thick, use tangent=2] (0, 0)-- (0,-2);
     \draw[use tangent=2] (-0.4,0) arc [start angle=180, end angle=270, radius=0.4];  
     \fill[use tangent=2] (-0.15, -0.15) circle [radius=0.03];
            \node[use tangent=2, below left] (0,0) {$\alpha\cdot x$};

    \node at (0,0) {$\Omega$};
    \node at (4.3,0.3) {$c(t)$};
    \node at (3.8,2) {$\alpha\cdot c(t)$};
\end{tikzpicture}
\end{figure}
\end{proof}

%
%
%

\begin{lemma}\label{clm3.8}
Let $\Theta\subset X$ be a closed subset. Let $t\ge 0$. 
For a point
$z$ outside the interior of $(\Theta)_t$ we have
\begin{align}\label{eq: neighborhood of convex set}
d(z, \Theta)=d(z, (\Theta)_t)+t,	
\end{align}
and
\begin{align}\label{eq: gradient of neigborhoods of convex sets}
\nabla d(\cdot, (\Theta)_t)|_z=\nabla d(\cdot, \Theta)|_z.
\end{align}
\end{lemma}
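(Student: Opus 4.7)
The plan is to first prove the distance identity \eqref{eq: neighborhood of convex set} and then deduce the gradient identity \eqref{eq: gradient of neigborhoods of convex sets} by differentiating \eqref{eq: neighborhood of convex set} on an open neighborhood of $z$.

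For \eqref{eq: neighborhood of convex set}, the upper bound $d(z,\Theta) \le d(z,(\Theta)_t) + t$ follows from the triangle inequality: for any $y \in (\Theta)_t$ there is $w \in \Theta$ with $d(y,w) \le t$, giving $d(z,\Theta) \le d(z,w) \le d(z,y) + t$; taking the infimum over $y \in (\Theta)_t$ yields the claim. The reverse bound uses properness of the Hadamard manifold $X$: the closed set $\Theta$ admits a nearest point $p$ to $z$ with $d(z,p) = d(z,\Theta)$. Since $z \notin \operatorname{int}(\Theta)_t$ we have $d(z,\Theta) \ge t$, so on the minimizing geodesic from $z$ to $p$ one can locate the point $q$ at distance exactly $t$ from $p$. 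Then $q \in (\Theta)_t$ and $d(z,q) = d(z,p) - t$, whence $d(z,(\Theta)_t) \le d(z,\Theta) - t$.

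For \eqref{eq: gradient of neigborhoods of convex sets}, the key observation is that \eqref{eq: neighborhood of convex set} holds on an open neighborhood of $z$, not only at $z$. When $d(z,\Theta) > t$, continuity of $d(\cdot,\Theta)$ provides a neighborhood $U$ on which $d(\cdot,\Theta) > t$ everywhere, and every point of $U$ then satisfies the hypothesis of \eqref{eq: neighborhood of convex set}; hence $d(\cdot,\Theta) = d(\cdot,(\Theta)_t) + t$ as functions on $U$, and differentiation immediately gives \eqref{eq: gradient of neigborhoods of convex sets} because the additive constant $t$ contributes nothing.

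The main delicacy I anticipate is the boundary case $d(z,\Theta) = t$, where $z \in \partial (\Theta)_t$ and the function $d(\cdot,(\Theta)_t)$ need not be classically differentiable at $z$. I expect this to be handled by taking the limit from the exterior $\{d(\cdot,\Theta) > t\}$: in the intended applications $\Theta$ is a convex sub-level set of a displacement function, both distance functions are smooth on this exterior region, and the identity \eqref{eq: gradient of neigborhoods of convex sets} extends continuously to the boundary.
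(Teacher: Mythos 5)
Your proposal is correct and follows essentially the paper's own route: the paper likewise proves the distance identity by taking a nearest point $z_\Theta\in\Theta$, the minimizing geodesic from $z$ to it, and a triangle-inequality comparison at the entry point of this geodesic into $(\Theta)_t$, and it deduces the gradient identity directly from the distance identity exactly as you do in the case $d(z,\Theta)>t$. The boundary case you flag is left just as implicit in the paper, where $\nabla d(\cdot,(\Theta)_t)$ at a point of $\partial(\Theta)_t$ is read as the external normal (a limit from the exterior), so your concluding remark matches the intended interpretation.
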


\begin{proof}
The second assertion~\eqref{eq: gradient of neigborhoods of convex sets} follows from the first~\eqref{eq: neighborhood of convex set}, which we show next. 
Let $z_\Theta$ be a point in $\Theta$ of minimal distance to $z$. Let $c$ be a minimizing geodesic from $z$ 
to $z_\Theta$. Let $z_t$ be the first point of intersection of $c$ with $(\Theta)_t$. Then $d(z_t, z_\Theta)=t$ and $d(z,z_\Theta)=d(z, z_t)+d(z_t, z_\Theta)$. If $d(z, (\Theta)_t)<d(z, z_t)$ then there 
would be a point $z_t'\in (\Theta)_t$ with $d(z,z_t')<d(z, z_t)$ and thus $d(z, \Theta)\le d(z,z_t')+t<d(z,z_t)+t=d(z,\Theta)$, which is absurd. Hence $d(z, (\Theta)_t)=d(z, z_t)$ which implies~\eqref{eq: neighborhood of convex set}. 
\end{proof}

\begin{proof}[Proof of Lemma~\ref{lem: angle of sublevel sets}]
Denote $t_\alpha=d(x,\{d_\alpha\le \epsilon_\alpha\})$ and $t_\beta=d(x,\{d_\beta\le \epsilon_\beta\})$. Without loss of generality suppose $t_\alpha\le t_\beta$. Set
$$
 \Omega:=\overline{(\{d_\beta\le\epsilon_\beta\})_{(t_\beta - t_\alpha)}}.
$$
unless $t_\alpha=t_\beta$ in which case we set $\Omega=\{d_\beta\le\epsilon_\beta\}$. Since $\alpha$ and $\beta$ commute, $\Omega$ is $\alpha$-invariant.  Lemma~\ref{lem:big-angle} implies that 
\[\nabla d_\alpha |_y\cdot \nabla d(\cdot,\Omega)|_y\ge 0\]
for every $y\in\partial\{d_\alpha\le\epsilon_\alpha\}\cap\partial\Omega$. 
Using the remark following Theorem~\ref{lem:big-angle} and 
Lemma~\ref{clm3.8} we obtain that 
\[
 \nabla d(\cdot,\{d_\alpha\le\epsilon_\alpha\})|_y\cdot \nabla d(\cdot,\{ d_\beta\le\epsilon_\beta\})|_y\ge 0
\]
for every $y\in\partial\{d_\alpha\le\epsilon_\alpha\}\cap\partial\Omega$. 
Denoting the maximal angle by
\[
 \varphi_0=\sup\{\angle \big( \nabla d(\cdot,\{d_\alpha\le\epsilon_\alpha\})|_y,\nabla d(\cdot,\Omega)|_y\big) : y\in\partial\{d_\alpha\le\epsilon_\alpha\} \cap\partial\Omega\}
\]
we deduce that $\varphi_0\le\frac{\pi}{2}$. Now for $t>0$ we let
\[
 \varphi_t=\sup\{\angle \big( \nabla d(\cdot,\{d_\alpha\le\epsilon_\alpha\})|_y,\nabla d(\cdot,\Omega)|_y\big) : y\in\partial(\{d_\alpha\le\epsilon_\alpha\})_t \cap\partial(\Omega)_t\}.
\]
Observe that since the curvature is non-positive both $\{d_\alpha\le\epsilon_\alpha\}$ and $\Omega$ are convex, so we may apply
\cite[Lemma 7.2]{hv} and deduce that $\varphi$ is non-increasing. By assumption,
\[
 d(x,\{d_\alpha\le\epsilon_\alpha\})=d(x,\Omega)=t_\alpha.
\]
It follows that
\[
  \angle \big( \nabla d(\cdot,\{d_\alpha\le\epsilon_\alpha\})|_x,\nabla d(\cdot,\Omega)|_x\big)\le\varphi_{t_\alpha}\le\frac{\pi}{2}.
\]
Therefore
\[
\nabla d(\cdot,\{d_\alpha\le\epsilon_\alpha\})\cdot\nabla d(\cdot,\{d_\beta\le\epsilon_\beta\})=
 \nabla d(\cdot,\{d_\alpha\le\epsilon_\alpha\})|_x\cdot\nabla d(\cdot,\Omega)|_x\ge 0.\qedhere
\]
\end{proof}

\subsection*{Step 2: Setting the constants $b$ and $\delta$.}

In this step we define constants $b=b(d)$ and $\delta=\delta(d,b,\epsilon)$
which will be used throughout the proof and discuss their properties.

\begin{defn}
	For every dimension $d$ let $b=b(d)$ be the maximal cardinality of a $1$-discrete subset of unit vectors in $\bbR^d$. For every dimension $d$ and every $\epsilon>0$ choose $\delta_0(\epsilon/4,b+1,d)>0$ that satisfies the 	statement of Lemma~\ref{lem: positiveD} below. Then we define $\delta=\min\{\delta_0(\epsilon/4,b+1,d),\epsilon/(2(b+1))\}$.
\end{defn}

\begin{lemma}[cf.~\cite{hv}*{Proposition~4.7}]\label{lem: positiveD}
For all $\alpha>0$, $\beta>0$ and for every dimension $d>0$
there exists $\delta_0=\delta_0(\alpha,\beta,d)>0$
 satisfying the following property:
For $\delta\leq \delta_0$ and every $d$-dimensional Hadamard manifold $X$ with sectional curvature $\ge -1$,
any point $x\in X$, any ball $C\subset X$ of radius~$\alpha$ with $x\in\partial C$, and
any point $y\in B(x, \delta\beta)\setminus
(C)_{\delta/2}$, the inner product of the external normal vector of $C$ at $x$
with the tangent at $x$ to the geodesic segment $[x,y]$
is positive.
\end{lemma}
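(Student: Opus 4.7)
The plan is to reduce the statement to an estimate along the single geodesic from $x$ in direction $v$. Let $p$ denote the center of the ball $C$ and set $\rho(\cdot) = d(p,\cdot)$, so that $C = \{\rho \le \alpha\}$ and the outward normal of $C$ at $x$ equals $\hat n = \nabla\rho|_x$. Let $v$ be the unit tangent at $x$ to the geodesic segment $[x,y]$, and write $y(t) = \exp_x(tv)$, so $y = y(s)$ for some $s = d(x,y) \le \delta\beta$. Define $\phi(t) = \rho(y(t))$; then $\phi(0) = \alpha$, $\phi'(0) = \langle v, \hat n\rangle$, and for $y$ outside $C$ we have $d(y,C) = \phi(s) - \alpha$. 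I would argue the contrapositive of the assertion: assuming $\phi'(0) \le 0$, bound $\phi(s) - \alpha$ from above by something strictly less than $\delta/2$, contradicting $y \notin (C)_{\delta/2}$.

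Next I would apply the Hessian comparison theorem. Since the sectional curvatures of $X$ lie in $[-1,0]$ (in particular, $\ge -1$), and since $\mathrm{Hess}(\rho)$ annihilates $\nabla\rho$, comparison with the hyperbolic model yields
\[
  \mathrm{Hess}(\rho)(w,w) \le \coth(\rho)\,\lvert w\rvert^2
\]
for every tangent vector $w$ at a point with $\rho > 0$. Decomposing the unit vector $\dot y(t)$ into components along and orthogonal to $\nabla\rho$ and using that the parallel component contributes zero to the Hessian, this gives $\phi''(t) \le \coth(\phi(t))$. Restricting to $t \le \alpha/2$ and using that $\phi$ is $1$-Lipschitz forces $\phi(t) \ge \alpha/2$, hence $\phi''(t) \le K_\alpha := \coth(\alpha/2)$ on this range.

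Third, I would run Taylor's theorem with this bound: for $t \le \alpha/2$,
\[
  \phi(t) - \alpha \;=\; t\,\phi'(0) + \int_0^t (t-r)\,\phi''(r)\,dr \;\le\; t\,\phi'(0) + \tfrac{1}{2}K_\alpha t^2.
\]
Under the hypothesis $\phi'(0) \le 0$ and $t \le \delta\beta$ this yields $\phi(s) - \alpha \le \tfrac{1}{2} K_\alpha \beta^2 \delta^2$. Setting
\[
  \delta_0 \;:=\; \min\!\Bigl(\tfrac{\alpha}{2\beta},\;\tfrac{1}{2K_\alpha\beta^2}\Bigr),
\]
one checks that for every $\delta \le \delta_0$ the right-hand side is strictly smaller than $\delta/2$, producing the desired contradiction with $y \notin (C)_{\delta/2}$.

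The main obstacle, and the only genuinely geometric ingredient, is the correct invocation of Hessian comparison: one must verify that the Hessian of the distance from $p$ is bounded above by the hyperbolic model value on the orthogonal complement of $\nabla\rho$, so that no quantity depending on $d$ enters beyond the curvature bound itself. Once this is in place the rest is a standard one-dimensional Taylor estimate, and the dimension $d$ enters only through the uniform lower curvature bound used in the comparison.
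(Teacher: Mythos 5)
Your argument is correct, and it takes a genuinely different route from the paper. The paper (following \cite{hv}*{Proposition~4.7}) argues by contradiction and compactness: it rescales the balls $B(x_n,\delta_n\beta)$ by $\frac{1}{\delta_n\beta}$, lets the pointed spaces converge in the Gromov--Hausdorff topology to the Euclidean unit ball, observes that the rescaled balls $C_n$ converge to a half-ball, and reads off a positive inner product in the limit; this is soft and gives no explicit value of $\delta_0$. You instead prove the contrapositive directly along the single geodesic $t\mapsto\exp_x(tv)$: writing $\phi(t)=d(p,\exp_x(tv))$ with $p$ the center of $C$, you use that $\rho=d(p,\cdot)$ is smooth away from $p$ on a Hadamard manifold, that the lower curvature bound $K\ge-1$ gives the Hessian comparison $\mathrm{Hess}\,\rho\le\coth(\rho)\bigl(g-d\rho\otimes d\rho\bigr)$, hence $\phi''\le\coth(\alpha/2)$ as long as $t\le\alpha/2$ (using that $\phi$ is $1$-Lipschitz and $\phi(0)=\alpha$), and then Taylor's theorem with $\phi'(0)\le0$ forces $d(y,C)=\phi(s)-\alpha\le\tfrac12\coth(\alpha/2)\beta^2\delta^2<\delta/2$ for $\delta\le\delta_0=\min\bigl(\tfrac{\alpha}{2\beta},\tfrac{1}{2\coth(\alpha/2)\beta^2}\bigr)$, contradicting $y\notin (C)_{\delta/2}$. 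All the individual steps check out (the external normal is $\nabla\rho|_x$, the radial direction contributes nothing to the Hessian, and $d(y,C)=\rho(y)-\alpha$ outside $C$). What your approach buys is an explicit, effective $\delta_0$ depending only on $\alpha$ and $\beta$ (the dimension plays no role beyond the curvature bound), while the paper's limiting argument is shorter to state and generalizes verbatim to settings where one only controls the geometry up to Gromov--Hausdorff approximation rather than via a smooth distance function.
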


\begin{proof}
The statement is a variant of~\cite{hv}*{Proposition~4.7}. For convenience we sketch the proof.
Assume the contrary. Then there is a sequence $\delta_n \to 0$,
$d$-dimensional Hadamard manifolds $X_n$,
points $x_n\in X_n$ and $y_n\in B(x_n, \delta_n\beta)\setminus ( C_n)_{\delta_n/2}$,
and $\alpha$-balls $C_n$ violating the above statement.

Let $d_n$ be the metric of $X_n$. Let $B_n:=B(x_n, \delta_n\beta)$. The sequence of pointed metric spaces $(B_n, x_n)$ with scaled metric
$\frac{1}{\delta_n\beta}d_n$ converges in the Gromov--Hausdorff topology to the (pointed) standard Euclidean unit ball $(B_0,0)$.
The sequence $(C_n\cap B_n, x_n)$ converges to a half ball $C_0$ in $B_0$ because the scaled radius of $C_n$ is at least $\alpha/\delta_n\beta$, thus tends to~$\infty$. Upon passing to a subsequence, we may also assume that $(y_n)$
converges to a point $y_0\in B_0\setminus (C_0)_{\frac{1}{2\beta}}$.
The inner product of $y_0$ and the external normal to $C_0$ is $\ge \frac{1}{2\beta}>0$ which contradicts the assumption.
\end{proof}

Our choice of $b$ was made so that the following statement holds true.

\begin{lemma}[{\cite[Proposition~4.2]{hv}}]\label{closeN}
For every $s>0$ and $t>0$ with $s+t< \epsilon$, we have
\[
M\setminus (M_-)_s \subset ( M\setminus (M_-)_{s+t})_{bt}.\]
\end{lemma}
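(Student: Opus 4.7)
The claim reformulates as follows: from any $x\in M$ with $d(x,M_-)>s$ one can reach a point $y$ at distance $>s+t$ from $M_-$ by traveling in $M$ a distance at most $bt$. Equivalently, one must increase the $1$-Lipschitz function $f(y):=d(y,M_-)$ by $t$ along a path of length at most $bt$, i.e.\ at average rate $\ge 1/b$. I would work in the universal cover $X$, where $\tilde M_- = \bigcup_{\gamma\in\Gamma'}C_\gamma$ with each $C_\gamma=\{d_\gamma\le\epsilon_\gamma\}$ a closed convex subset of $X$ with smooth boundary (since $d_\gamma$ is convex on the Hadamard manifold $X$). Then $f_\gamma(y):=d(y,C_\gamma)$ is $1$-Lipschitz and convex, with unit outward gradient $v_\gamma(y)\in T_yX$ at any $y\notin C_\gamma$, and $f=\min_\gamma f_\gamma$.

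The plan is to construct a unit-speed piecewise-smooth path $\sigma:[0,bt]\to X$ starting at a lift $\tilde x$ of $x$ along which $r\mapsto f(\sigma(r))$ grows at rate $\ge 1/b$. At each $y=\sigma(r)$, let $I(y)\subset\Gamma'$ be the finite set of indices for which $f_\gamma(y)$ lies within a small buffer $\eta$ of $f(y)$. Corollary \ref{cor:2.5}, applied in a ball of controlled radius around $y$, bounds $|I(y)|$ uniformly in terms of $d$, $\epsilon$, $s+t$. The direction $\dot\sigma(r)\in T_yX$ is then chosen so that $\dot\sigma(r)\cdot v_\gamma(y)\ge 1/b$ for every $\gamma\in I(y)$; this is precisely what the choice of $b$ is designed to guarantee. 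By definition, $b$ is the maximal cardinality of a $1$-discrete subset of the unit sphere in $\mathbb{R}^d$, so the active gradients cluster into at most $b$ groups of angular diameter $<\pi/3$. Combining this with the commutativity hypothesis of Theorem \ref{thm:HV}, which via Lemma \ref{lem: angle of sublevel sets} forces pairwise non-negative inner products of the $v_\gamma(y)$'s, a normalized-barycenter argument yields the desired $v$ with $v\cdot v_\gamma(y)\ge 1/b$ for every $\gamma\in I(y)$.

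Once $\dot\sigma(r)$ is fixed, convexity of each $f_\gamma$ gives $\frac{d}{dr}f_\gamma(\sigma(r))\ge 1/b$ for all $\gamma\in I(y)$, while the non-critical $f_{\gamma'}$ remain strictly above $f$ in a small neighborhood of $r$; hence $\frac{d}{dr}f(\sigma(r))\ge 1/b$ almost everywhere. Integration over $[0,bt]$ yields $f(\sigma(bt))\ge f(\tilde x)+t>s+t$, and projecting $\sigma(bt)$ down to $M$ produces the required point $y$. The hypothesis $s+t<\epsilon$ guarantees that the enlarged sub-level sets $\{d_\gamma\le\epsilon_\gamma+s+t\}$ remain within the $\epsilon$-thin regime, so the description $(\tilde M_-)_r=\bigcup_\gamma\{d_\gamma\le\epsilon_\gamma+r\}$ stays valid along the path.

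The main obstacle is the combinatorial–geometric step producing the consensus direction with uniform lower bound $1/b$: a naive choice of $v$ as the normalized sum of active gradients can fail (e.g.\ in $\mathbb{R}^2$ with six equi-spaced gradients), so one genuinely needs both the clustering controlled by the packing number $b$ and the pairwise non-negativity supplied by the commutativity assumption. A secondary technicality is constructing $\sigma$ rigorously despite the discontinuous dependence of $I(y)$ on $y$; I would resolve this via an Euler-step discretization at scale $\tau\to 0$, exploiting the uniform bound on $|I(y)|$ to pass to the limit.
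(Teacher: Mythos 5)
Your proposal is correct in outline, but it is worth being clear about how it relates to the paper's proof: the paper does not construct the path at all, it proves Lemma~\ref{closeN} by reduction to \cite{hv}*{Proposition~4.2} (and Lemma~4.1 there), and its entire content is the verification that the hypotheses of that result hold here --- local finiteness of the cover of $\tilde M_+$ by the convex sets $\{d_\gamma<\epsilon_\gamma\}$ and, crucially, the existence of the (not necessarily continuous) vector field $\hat n$ with $\hat n(x)\cdot\nabla d(\cdot,\{d_\gamma<\epsilon_\gamma\})|_x>1/b$ whenever $d_\gamma(x)\le 4\epsilon_\gamma$, which is exactly Lemma~\ref{lem: direction}. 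What you write is essentially a self-contained reconstruction of the argument behind that citation: your ``consensus direction'' (normalized barycenter of a maximal $1$-discrete subset of the active gradients, using the pairwise non-negativity supplied by Lemma~\ref{lem: angle of sublevel sets} and the packing bound $b$) \emph{is} the paper's $\hat n(x)$, and your Euler-step path along which $d(\cdot,\tilde M_-)$ grows at rate at least $1/b$ is the content of \cite{hv}*{Proposition~4.2}; so the two routes buy, respectively, brevity (the paper) and self-containedness (yours). Two small repairs: first, the asserted identity $(\tilde M_-)_r=\bigcup_\gamma\{d_\gamma\le\epsilon_\gamma+r\}$ is not an identity --- moving a distance $r$ changes a displacement function by at most $2r$, so what holds (and is all you actually use, e.g.\ to see that active elements satisfy $d_\gamma\le 4\epsilon_\gamma$, that Lemma~\ref{lem: angle of sublevel sets} applies, and that Corollary~\ref{cor:2.5} applies because your path stays in $\tilde M_+\subset\tilde M_{\ge\epsilon}$) is the inclusion $(\tilde M_-)_r\subset\bigcup_\gamma\{d_\gamma<\epsilon_\gamma+2r\}$. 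Second, for the precise constant $1/b$ you should simply invoke Lemma~\ref{lem: direction} (which rests on \cite{hv}*{Lemma~7.3}) rather than rely on the rough barycenter estimate: the crude bound one gets from $S\cdot w>1/2$ and $|S|\le\sqrt{b(b+1)/2}$ is only of order $1/b$, not literally $1/b$; this would be harmless (any dimensional constant in place of $b$ serves the same purpose downstream), but as stated your claim of the exact constant is not justified by the sketch you give, whereas the paper has already packaged it for you.
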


\begin{proof}
Up to different notation the lemma is identical with \cite[Proposition~4.2]{hv}.
Note that \cite[Proposition~4.2]{hv} is a step in the proof of  \cite[Lemma~4.1]{hv} and some assumptions of the former are implicit in its formulation --- they are stated explicitly in the formulation of the latter.
However, not all the assumptions of the latter are actually used in the proof of the former. Hence our task is to compare the setting of~\cite{hv}*{Lemma~4.1 and~4.2} with
the present one. The correspondence\footnote{Recall that the results of \S4 of \cite{hv} apply to a general Hadamard manifold.} is
$S=X$, $M'=M_+$, $I=\Gamma'$, $X_\gamma=\{d_\gamma<\epsilon_\gamma\}$ for $\gamma\in \Gamma'$, $\tau=s$, and $\delta=t$.
Then we have $M'=M_+\subset M_{\geq\epsilon}$, and the cover of $\tilde M_+$ by the $X_i$ is indeed a locally finite cover by convex open sets with a smooth boundary.
Finally, Lemma~\ref{lem: direction} below provides the (not necessarily continuous) vector field $\hat n$ with the exact property assumed in~\cite[Lemma~4.1]{hv}. The properties above are the only ones used in the proof of~\cite[Proposition~4.2]{hv}. Thus we get the result.
\end{proof}

\begin{lemma}\label{lem: direction}
For every $x\in (\tilde M_-)_\epsilon\setminus\tilde M_-=\tilde M_+\cap (\tilde M_-)_\epsilon$ there is a unit vector $\hat n(x)\in T_xX$ such that
\[
 d_\gamma(x)\le 4\epsilon_\gamma
 ~ \Rightarrow ~
 \hat n(x) \cdot \nabla d(\cdot,\{d_{\gamma}<\epsilon_{\gamma}\})|_x > 1/b.
\]
for every $\gamma\in \Gamma'$.
\end{lemma}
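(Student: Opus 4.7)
\medskip
\noindent\textbf{Plan.}
The strategy is to build $\hat n(x)$ as a normalized sum of the gradient unit vectors
$v_\gamma := \nabla d(\cdot,\{d_\gamma<\epsilon_\gamma\})|_x$ indexed by
$\Lambda := \{\gamma\in\Gamma' : d_\gamma(x)\le 4\epsilon_\gamma\}$. Since $x\in(\tilde M_-)_\epsilon\setminus\tilde M_-$, the set $\Lambda$ is non-empty (as in the remark preceding Lemma~\ref{lem:big-angle}) and, since $x$ lies outside each convex set $\{d_\gamma<\epsilon_\gamma\}$ for $\gamma\in\Lambda$, each $v_\gamma$ is a bona fide unit vector in $T_xX$. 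The hypothesis of Theorem~\ref{thm:HV} says that the group $A=\langle\Lambda\rangle$ is commutative, so Lemma~\ref{lem: angle of sublevel sets} applies and furnishes the pairwise non-negative inner products $v_\alpha\cdot v_\beta\ge 0$ for all $\alpha,\beta\in\Lambda$.

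\medskip
The combinatorial step is to control the number of essentially different gradient directions. I extract a maximal $1$-discrete subset $V_0\subset V:=\{v_\gamma:\gamma\in\Lambda\}$ by a greedy procedure: enumerate $V$ and add $v$ to $V_0$ whenever $\|v-w\|\ge 1$ for every previously chosen $w\in V_0$. By the very definition of $b$, this forces $|V_0|\le b$, and by the maximality of the procedure every $v_\gamma$ either lies in $V_0$ or satisfies $v_\gamma\cdot w^*>1/2$ for some $w^*\in V_0$.

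\medskip
Now set $\hat n(x):=\sum_{w\in V_0}w\,\big/\,\bigl\|\sum_{w\in V_0}w\bigr\|$. The pairwise non-negativity from the first paragraph guarantees that every term of $v_\gamma\cdot\sum_{w\in V_0}w$ is non-negative, and the previous paragraph supplies at least one term of size $\ge 1/2$. Combined with $\bigl\|\sum_{w\in V_0}w\bigr\|\le|V_0|\le b$, this yields a lower bound of order $1/b$ for $v_\gamma\cdot\hat n(x)$. The main obstacle is to recover the exact threshold $1/b$ demanded by the statement rather than the $1/(2b)$ that the naive greedy bound furnishes. Closing the gap should be a matter of sharper bookkeeping: the Cauchy--Schwarz estimate $\bigl\|\sum_{w\in V_0}w\bigr\|\le|V_0|$ is strict unless all $w$'s coincide, and the off-diagonal inner products $v_\alpha\cdot v_\beta$ contribute strictly positive terms whenever $V_0$ has more than one element; alternatively one tightens the greedy threshold slightly and tracks the resulting constants through the same argument.
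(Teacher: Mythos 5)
Your construction is exactly the paper's (a maximal $1$-discrete subset of the gradient directions, summed and normalized; the paper then defers the verification to \cite{hv}*{Lemma~7.3}), but your verification has a genuine gap: as you yourself note, the estimate $v_\gamma\cdot S>1/2$ together with $\|S\|\le |V_0|\le b$ only yields the threshold $1/(2b)$, and none of the repairs you gesture at actually recovers $1/b$. The off-diagonal inner products $w\cdot w'$ for $w,w'\in V_0$ may all be $0$ (orthogonal vectors are both $1$-discrete and have non-negative inner products), so they contribute nothing; and even the sharp form of your Cauchy--Schwarz step, namely $\|S\|^2\le k+\binom{k}{2}$ (using $w\cdot w'\le 1/2$ for distinct $1$-discrete unit vectors), gives only $v_\gamma\cdot\hat n(x)>1/\sqrt{2k(k+1)}$, which for $k$ as large as $b$ is still below $1/b$. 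Tightening the greedy threshold does not help either: the cardinality bound you would then get is for $t$-discrete sets with $t\ne 1$, and is no longer the constant $b$ that the statement (and its later use via Lemma~\ref{closeN} and \cite{hv}*{Lemma~4.1}) requires.

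The missing idea is to use the pairwise non-negativity of the inner products a second time, to bound the \emph{cardinality} of $V_0$ rather than the norm of $S$. If $V_0=\{w_1,\dots,w_k\}$ has pairwise non-negative inner products and is $1$-discrete, then $V_0\cup(-V_0)$ is again a $1$-discrete set of $2k$ distinct unit vectors, since $\|w_i+w_j\|=\sqrt{2+2\,w_i\cdot w_j}\ge\sqrt{2}$ for $i\ne j$ and $\|w_i+w_i\|=2$; hence $2k\le b$, i.e.\ $k\le b/2$. Feeding this into your own estimate finishes the proof: $S\ne 0$ because $\|S\|^2\ge k$, every $v_\gamma$ satisfies $v_\gamma\cdot S>1/2$ (strictly, since either $v_\gamma\in V_0$, giving a diagonal term $1$, or maximality gives some $w^*$ with $\|v_\gamma-w^*\|<1$, i.e.\ $v_\gamma\cdot w^*>1/2$), and $\|S\|\le k\le b/2$, so $v_\gamma\cdot\hat n(x)>\tfrac{1/2}{b/2}=1/b$ as required. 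With this one additional observation your argument becomes a complete proof along the same lines as the paper's.
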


\begin{proof}
Let $I:=\{\gamma\in \Gamma':d_\gamma(x)\le 4\epsilon_\gamma\}$. Let $\Delta(x)$
be a maximal $1$-discrete subset of the vectors $\nabla d(\cdot,\{d_\gamma<\epsilon_\gamma\})|_x$, $\gamma\in I$. The set $\Delta(x)$ has at most
$b=b(d)$ elements. Let $\hat n(x)$ be the sum of the vectors of $\Delta(x)$ normalized to unit length. The statement about $\hat n(x)$ is then easily verified
(see~\cite{hv}*{Lemma~7.3}).
\end{proof}

\subsection*{Step 3: Constructing a deformation retract}

We define
\[
 \tilde M_+'=X\setminus (\tilde M_-)_{\frac{\epsilon}{2}}~\text{ and }M_+'=\Gamma\backslash\tilde M_+'.
\]
The main task of this step is to show that the shrinking $M_+'$ of $M_+$ has
the same homotopy type as $M_+$. Indeed there is a strong
deformation retract from $M_+$ to $M_+'$ with nice properties.
Recall that a \emph{strong deformation retract} from a space $W$ to a subset $A\subset W$ is a homotopy $h\colon W\times [0,1]\to W$ from $\id_W$ to a map into $A$ such that $h_t\vert_A=\id_A$ for every $t\in [0,1]$.

\begin{lemma}\label{lem: deformation retract via vector field}
There exists a
strong deformation retract $f\colon M_+\times [0,1]\to M_+$ from $M_+$ to $M_+'$ with the following
properties:
\begin{enumerate}
\item[(i)] The time 1 map, $f_1\colon M_+\to M_+'$, restricts to a homeomorphism $f_1\colon \partial M_+\to \partial M_+'$.
\item[(ii)] For every $y\in M_+'$ with $d(y,(M_-)_{\frac{\epsilon}{2}})>\delta$, the ball $B(y,(b+1)\delta)$ is stable under $f$ in the sense that if $(x,t)\in  M_+\times [0,1]$ and
$f(x,t)\in B(y,(b+1)\delta)$ then $f(x,s)\in B(y,(b+1)\delta)$ for all $s\in [t,1]$.
\end{enumerate}
\end{lemma}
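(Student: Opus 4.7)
The plan is to construct a continuous $\Gamma$-equivariant unit vector field $N$ on the collar $W:=\tilde M_+\cap (\tilde M_-)_{\epsilon/2}$ that points ``outward'' from $\tilde M_-$ in a controlled way, integrate it to a flow $\phi_t$, and reparametrize so that each trajectory reaches $\partial\tilde M_+'=\{d(\cdot,\tilde M_-)=\epsilon/2\}$ at time $t=1$. To obtain $N$, I would first invoke Lemma~\ref{lem: direction}: at every $x\in(\tilde M_-)_\epsilon\setminus\tilde M_-$ it provides a unit vector $\hat n(x)\in T_xX$ with inner product $>1/b$ against every $\nabla d(\cdot,\{d_\gamma<\epsilon_\gamma\})|_x$ for $\gamma\in\Gamma'$ satisfying $d_\gamma(x)\le 4\epsilon_\gamma$. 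This pointwise choice is not continuous, so I would fix a $\Gamma$-equivariant locally finite cover of $W$ by balls $B(x_i,\delta/2)$ with a $\Gamma$-equivariant smooth partition of unity $\{\rho_i\}$, and set
\[ N(x)\;:=\;\frac{\sum_i\rho_i(x)\,P_{x_i\to x}(\hat n(x_i))}{\bigl\|\sum_i\rho_i(x)\,P_{x_i\to x}(\hat n(x_i))\bigr\|}, \]
where $P_{x_i\to x}$ denotes parallel transport along the geodesic from $x_i$ to $x$. The constant $\delta$ (set in Step~2 of the proof of Theorem~\ref{thm:HV}) was tuned via Lemma~\ref{lem: positiveD} precisely so that on scales of $(b+1)\delta$ the relevant angles change by very little; hence each summand, and therefore $N(x)$ itself, still makes a strictly positive inner product with $\nabla d(\cdot,\{d_\gamma<\epsilon_\gamma\})|_x$ for every $\gamma$ with $d_\gamma(x)\le 4\epsilon_\gamma$.

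With $N$ in hand, I would integrate it to a flow $\phi_t$ on $W$. Along any trajectory the function $t\mapsto d(\phi_t(x),\tilde M_-)$ is strictly increasing, with rate bounded below by $1/b$, because at each point along the trajectory the nearest point of $\tilde M_-$ lies in some $\{d_\gamma<\epsilon_\gamma\}$ with $d_\gamma\le 4\epsilon_\gamma$. Consequently each flow line meets $\partial\tilde M_+'$ at a unique, continuously varying time $T(x)$. Setting $f(x,t):=\phi_{tT(x)}(x)$ for $x\in W$ and $f(x,t):=x$ on $\tilde M_+\setminus W$, and descending by $\Gamma$-equivariance, yields the desired strong deformation retract from $M_+$ to $M_+'$. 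Property~(i) follows at once: $\partial M_+=\partial M_-$ is smooth and transverse to $N$, as is $\partial M_+'$, so $f_1$ restricts to a diffeomorphism $\partial M_+\to\partial M_+'$.

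The main obstacle is property~(ii). Its truth depends on a \emph{local quasi-constancy} of $N$ built into the averaging: over a ball of radius $(b+1)\delta$ only boundedly many $\rho_i$ are supported, all with $x_i$ close to $y$, and by Lemma~\ref{lem: positiveD} the parallel transports $P_{x_i\to x}(\hat n(x_i))$ agree up to a small angle throughout the ball. In the exponential chart at $y$ the flow lines therefore look like almost-parallel straight segments in a common direction close to $N(y)$. The hypothesis $d(y,(\tilde M_-)_{\epsilon/2})>\delta$ forces any active trajectory passing through $B(y,(b+1)\delta)$ to have only a short remaining arclength before termination at $\partial\tilde M_+'$, and the near-parallelism prevents lateral escape before termination. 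The careful bookkeeping needed to close the ball under forward flow --- tracking the interplay between the angle bound from Lemma~\ref{lem: positiveD}, the radius $(b+1)\delta$, and the constraint $\delta\le\epsilon/(2(b+1))$ --- is where I expect the proof to be most delicate, and is the place where the specific numerical choice of $b$ is used in its sharpest form.
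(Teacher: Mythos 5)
There is a genuine gap, and it sits exactly where your construction departs from the paper's. You build the vector field by averaging parallel transports of the discontinuous vectors $\hat n(x_i)$ of Lemma~\ref{lem: direction} over a $\delta/2$-net, and the whole argument rests on the claim that ``on scales of $(b+1)\delta$ the relevant angles change by very little,'' so that each transported $\hat n(x_i)$ still pairs positively with $\nabla d(\cdot,\{d_\gamma<\epsilon_\gamma\})|_x$. That claim is false in general, and Lemma~\ref{lem: positiveD} does not assert it: that lemma controls the angle between the outward normal of a ball of \emph{fixed} radius $\alpha$ and geodesics to points outside a $\delta/2$-neighborhood; it says nothing about a modulus of continuity for the gradient fields $\nabla\phi_\gamma$. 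In fact these gradients can rotate by an angle close to $\pi$ over distances much smaller than $\delta$: for a loxodromic $\gamma$ whose sub-level set $\{d_\gamma<\epsilon_\gamma\}$ is a very thin tube around its axis, two points of the collar $W$ lying on opposite sides of the tube at distance $\ll\delta$ from it (and from each other) have nearly opposite gradients, so the corresponding $\hat n(x_i)$ point in nearly opposite directions, the averaged field can (nearly) vanish or point the wrong way, and the asserted lower bound $\ge 1/b$ on the growth of $d(\phi_t(x),\tilde M_-)$ fails. The paper avoids this entirely: it never tries to make $\hat n$ continuous, but instead takes $\Vec V(x)=\sum_{\gamma\in\Gamma'}\tau_\gamma(x)\nabla\phi_\gamma|_x$, a locally finite sum of the \emph{smooth} gradients themselves with equivariant cutoffs, and obtains positivity from the commutativity hypothesis via Lemma~\ref{lem: angle of sublevel sets}: all pairwise inner products of the relevant gradients are $\ge 0$, so the term for a $\gamma$ realizing the distance already forces $\langle\Vec V(x),\nabla\phi_\gamma|_x\rangle\ge 1$. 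Your proposal never invokes Lemma~\ref{lem: angle of sublevel sets}, which is the actual engine of the construction (and the reason the commutativity assumption appears in Theorem~\ref{thm:HV} at all).

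Property (ii) is also not established. Your sketch (``short remaining arclength'' plus ``near-parallelism prevents lateral escape'') again relies on the false quasi-constancy of the field, and the premise is quantitatively wrong: a trajectory entering $B(y,(b+1)\delta)$ can still have remaining length on the order of $\epsilon/2$, which by the choice $\delta\le\epsilon/(2(b+1))$ is at least the radius of the ball, so nothing is ``short'' at the relevant scale. The paper's mechanism is different and is where Lemma~\ref{lem: positiveD} is genuinely used: for $x$ in the ball with $\phi_\gamma(x)\le\epsilon/2+\delta/2$ one inscribes a ball $C=B(z,\epsilon/4)$ inside $\{\phi_\gamma\le\phi_\gamma(x)\}$ so that $\nabla\phi_\gamma|_x$ is the external normal of $C$ at $x$, and then Lemma~\ref{lem: positiveD} (with $\alpha=\epsilon/4$, $\beta=b+1$, using $d(y,C)>\delta/2$, which comes from $d(y,(\tilde M_-)_{\epsilon/2})>\delta$) shows the velocity makes an acute angle with the geodesic from $x$ to $y$; hence the distance to $y$ is non-increasing wherever the field is active and flow lines cannot exit the ball. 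Finally, for (i) your transversality remark does not by itself give a homeomorphism $\partial M_+\to\partial M_+'$ (injectivity and surjectivity need an argument); the paper produces the inverse by flowing along $-\Vec V$. In short: the correct statement needs the sum-of-gradients field together with Lemma~\ref{lem: angle of sublevel sets} for positivity and the inscribed-ball application of Lemma~\ref{lem: positiveD} for ball stability, and your averaging construction does not supply substitutes for either.
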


Lemma \ref{lem: deformation retract via vector field} could be deduced from the analysis in \cite[\S 3]{hv}. However, we cannot refer directly to statements from \cite[\S 3]{hv} but require adjustments in the arguments. Therefore we shall give a self contained proof which is in a sense more straightforward.

\begin{center}
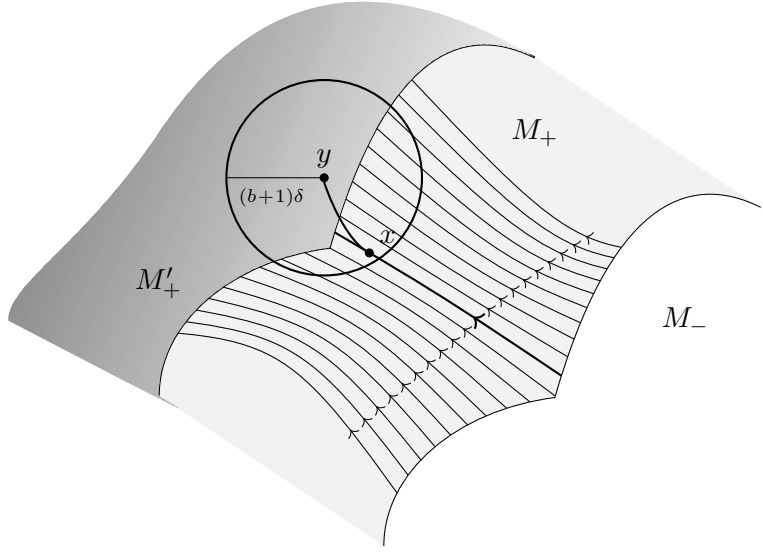
\begin{figure}[h]\label{fig: deformation retract}
\begin{tikzpicture}
\fill[color=gray!10, xshift=-3cm, yshift=+2cm] (0,0) arc (180:100:2.75cm and 2cm)  .. controls (3,4.5) and (4,5) .. (5,4.5) -- (8, 2.5) -- (3, -2) -- cycle;


\foreach \s in {0.6, 0.8, 1.0,1.2,1.4,1.6, 1.8, 2.0, 2.2,2.4, 2.8, 3.0, 3.2,3.4,3.6,3.8,4.0,4.2,4.4}
      \draw[thin, xshift=\s cm, yshift=\s cm,decoration={markings, mark=at position 0.7 with {\arrow{<}}},
        postaction={decorate}] (-4.5,2.3) .. controls (-1.9, 2.5-0.7*\s+0.1) .. (0,-0.5);
 \draw[thick, xshift=2.6 cm, yshift=2.6 cm,decoration={markings, mark=at position 0.7 with {\arrow{<}}},
        postaction={decorate}] (-4.5,2.3) .. controls (-1.9, 2.5-0.7*2.6+0.1) .. (0,-0.5);


\draw[thick, xshift=-3cm, yshift=+2cm] (0,0) arc (180:100:2.75cm and 2cm)  .. controls (3,4.5) and (4,5) .. (5,4.5);

\draw[thick] (0,0) arc (180:100:2.75cm and 2cm) coordinate (midpoint) .. controls (3,4.5) and (4,5) .. (5,4.5);

\fill[color=white]  coordinate (startpoint) ++(4,0) -- (0,0) arc (180:100:2.75cm and 2cm) .. controls (3,4.5) and (4,5) .. (5,4.5) .. controls (4,0) .. (startpoint);

\coordinate (endpoint) at (-0.8,4.9);

\shade[shading angle = 70, shading = axis, color=gray, xshift=-3cm, yshift=+2cm] coordinate (startpoint) (0,0) arc (180:100:2.75cm and 2cm)  .. controls (3,4.5) and (4,5) .. (5,4.5) .. controls (2.5, 6) and (1,5).. (0,3.5) .. controls (-1,2) and (-2, 1.5) .. (-2,1) ..controls (0.5,-0.3 ) .. cycle;


\draw[thick] (endpoint) circle (1.3cm);


%

\draw[thick]  (endpoint) ++(+0.6,-1) coordinate (start_geo) .. controls +(-0.4,0.2) and +(-0.1, 0.2) .. (endpoint);

\filldraw (start_geo) circle (1.5pt) node[above right] {$x$};

\filldraw (endpoint) circle (1.5pt) node[above] {$y$};


\draw[thin] (endpoint) -- +(-1.3,0);

\draw (endpoint) ++(-0.7,0) node[below] {\tiny $(b\negthinspace +\negthinspace 1)\delta$};

\node at (4,3) {$M_-$};

\node at (2,5.5) {$M_+$};

\node at (-3,3.5) {$M_+'$};
	
\end{tikzpicture}
\caption{Deformation retract by (truncated) flow lines. The angle between the flow line through $x$ and the geodesic $[x,y]$ is acute. The distance of $y$ to the light gray part is $>\delta$.}
\end{figure}
\end{center}

\begin{proof}
We will construct a $\Gamma$-equivariant strong deformation
retract from $\tilde M_+$ to $\tilde M_+'$; by passing to $\Gamma$-quotients we obtain the desired map~$f$. To this end,
we will construct a
smooth $\Gamma$-equivariant vector field $\Vec{V}$ on
$X$ with the following properties.

\begin{enumerate}[a)]
\item The image of the support of $\Vec{V}$ in $M=\Gamma\bs X$ is compact. Hence $\Vec{V}$ possesses a global flow $c\colon X\times\bbR\to X$.
\item On $\{(x,t)\in X\times [0,\infty)\mid d(c(x,t), \tilde M_-)\in (0,\epsilon/2)\}$
the function
$t\mapsto d(c(x,t), \tilde M_-)$ has lower Lipschitz constant~$1$.
\item Let $y\not\in (\tilde M_-)_{\epsilon/2+\delta}$. For $x\in B(y,(b+1)\delta)\cap  (\tilde M_-)_{\epsilon/2}$ the
angle between $\Vec{V}(x)$ and the geodesic from $x$ to $y$ is acute.
\end{enumerate}

Before constructing $\Vec{V}$ let us explain how, given $\Vec{V}$, we obtain the desired deformation
retract.

For $x\in X$ with $d(x, \tilde M_-)\le\epsilon/2$ define
$T(x)$ to be the minimal time $t\in [0,\infty)$ such that $d(c(x,T(x)), \tilde M_-)=\epsilon/2$, that is, $T(x)$ is the entrance time of the flow
line through $x$ into $\tilde M_+'$.
We claim that the entrance time is continuous.
Because of (b) we have $T(x)\le \frac{\epsilon}{2}$. Let $\epsilon_1>0$. The function
\[g\colon\bigl\{x\in X: d(x, \tilde M_-)\le\epsilon/2\bigr\}\times [0, \frac{\epsilon}{2}]\to\bbR,~(x,t)\mapsto d(c(x,t), \tilde M_-)\]
is continuous. Thus every point whose distance to $\tilde M_-$ is at most $\epsilon/2$ has a neighborhood $U$
so that there is $\delta_1>0$ such that $d(x,x')<\delta_1$ with $x,x'\in U$ implies
$|g(x,t)-g(x',t)|<\epsilon_1$ for every $t\in [0,1]$. Consider $x,x'\in U$ with
$d(x,x')<\delta_1$. Assume that $T(x)\le T(x')$.
Then $d(c(x', T(x)), \tilde M_-)\ge \epsilon/2-\epsilon_1 $ and thus
$T(x)-T(x')\le \epsilon_1$ according to (b).
Thus indeed, $T$ is continuous.

The map $f\colon \tilde M_+\times [0,1]\to \tilde M_+$ defined by
\[ f(x,t)=\begin{cases}
       	      c(x, tT(x)) & \text{ if $d(x, \tilde M_-)\le \epsilon/2$, }\\
       	      x & \text{ otherwise,}
\end{cases}
 \]
is the desired $\Gamma$-equivariant strong deformation retract from $\tilde M_+$ to $\tilde M_+'$. It is clear that $f_1$ restricts
to a map $\partial \tilde M_+\to \partial \tilde M_+'$. The inverse
of $f_1$ is obtained by flowing along $-\Vec{V}$ to $\partial \tilde M_+$ in a similar way as above.

The stability of balls, i.e.~the fact that flow lines do not leave a ball $B(y, (b+1)\delta))$ once they enter it, follows directly from property (c).

Next we turn to the construction of~$\Vec{V}$.
For $\gamma\in \Gamma'$ we define the function
\[\phi_\gamma(x)=d(x, \{d_\gamma<\epsilon_\gamma\})\]
which is convex and smooth.
It satisfies the
equivariance property
\[\phi_\gamma(\lambda x)=\phi_{\lambda \gamma\lambda^{-1}}(x).\]
We choose smooth
functions $\tau_\gamma\colon X\to [0,1]$ such that $\tau_\gamma \equiv 1$ on
$(\{d_\gamma<\epsilon_\gamma\})_{\epsilon/2}\cap \tilde M_+$ and $\tau_\gamma$ is supported
in $(\{d_\gamma<\epsilon_\gamma\})_{\epsilon/2+\delta/2}\cap (\tilde M_+)_{\delta/2}$,
and the collection $\{\tau_\gamma\}$ is $\Gamma$-equivariant, i.e. 
$\tau_\gamma(\lambda x)=\tau_{\lambda \gamma\lambda^{-1}}(x),~\forall \gamma,\lambda\in \Gamma$.
Now we set
\[ \Vec{V}(x)=\sum_{\gamma\in \Gamma'}\tau_\gamma(x)\cdot \nabla \phi_\gamma\vert_x.\]
Since for any $r>0$ and $x\in X$ the subset $\{g\in\isom(X): d_g(x)<r\}$ is compact and $\Gamma<\isom(X)$ is discrete, the above sum only involves finitely many summands.
Since $M$ has finite volume, Theorem~\ref{thm:comparison} easily implies that $M_+$ is compact. 

Property (b) follows since for every $x$ with $d(x, \tilde M_-)\in (0,\epsilon/2]$ and for every $\gamma\in\Gamma'$ with
\begin{equation}\label{eq: realizing dist}
	 d(x, \tilde M_-)=d\bigl(x, \{d_\gamma<\epsilon_\gamma\}\bigr),
\end{equation}
we have
\[ \langle \Vec{V}(x), \nabla\phi_\gamma\vert_x\rangle \ge \langle \nabla\phi_\gamma\vert_x, \nabla\phi_\gamma\vert_x\rangle =1\]
by Lemma~\ref{lem: angle of sublevel sets} and~\eqref{eq: distant from sublevel set}.
Let $y\not\in (\tilde M_-)_{\epsilon/2+\delta}$. Let $\Vec{U}(x)\in T_xX$ be the tangent to the geodesic from $x$ to $y$ in $X$. The stability
of $B(y, (b+1)\delta)$ will follow once we show:
\begin{equation}\label{eq: implication on inner product}
x\in  B(y,(b+1)\delta)\cap  (\tilde M_-)_{\epsilon/2}
~\Rightarrow~ \Vec{V}(x)\cdot \Vec{U}(x) >0.
\end{equation}
Let $x\in B(y,(b+1)\delta)\cap  (\tilde M_-)_{\frac{\epsilon}{2}}$. The  implication~\eqref{eq: implication on inner product} will follow from
\begin{equation}\label{eq: acute angle}
 \nabla \phi_\gamma|_x\cdot \Vec{U}(x)>0~~\text{ for every $x,\gamma$ with $\phi_\gamma(x)\le \epsilon/2+\delta/2$}.
\end{equation}
Let $z$ be a point on the shortest geodesic from $x$ to $\{d_\gamma\leq\epsilon_\gamma\}$ satisfying $d(z,x)=\epsilon/4$. Let $\pi_\gamma$ be the nearest point projection to $\{d_\gamma\leq\epsilon_\gamma\}$.
We let $C=B(z,\epsilon/4)$. For $w\in C$ note that
\[d(w, \pi_\gamma(x))\le d(w, z)+d(z, \pi_\gamma(x))\le \epsilon/4+d(x,\pi_\gamma(x))-\epsilon/4=\phi_\gamma(x).\]
Hence $C\subset\{\phi_\gamma\le \phi_\gamma(x)\}$. Hence
$\nabla \phi_\gamma|_x$ is the external normal vector of $C$ at $x$.
Now~\eqref{eq: acute angle} follows by applying Lemma~\ref{lem: positiveD} for
$\alpha=\epsilon/4$ and $\beta=b+1$ provided
$d(y,C)>\delta/2$. Since $\phi_\gamma(x)\le \epsilon/2+\delta/2$ we have
\[d(z, \tilde M_-)\le \phi_\gamma(z)= \phi_\gamma(x)-\epsilon/4<\epsilon/4+\delta/2.\]
So
\[ d(y, C)>d(y, \tilde M_-)-(\epsilon/4+d(z, \tilde M_-))>\delta/2
\]
since $y\not\in (\tilde M_-)_{\epsilon/2+\delta}$.
\end{proof}

\subsection*{Step~4: Completion of the proof of Theorem~\ref{thm:HV}} In the final step of the proof of
Theorem~\ref{thm:HV} we construct suitable open covers of $M_+$ and $\partial M_+$ that yield
the desired simplicial pair via the nerve construction. Let $\pr\colon X\to M$
be the universal cover. We define
\begin{itemize}
\item $Z$ to be a maximal $\delta$-discrete subset in $M\setminus (M_-)_{\frac{\epsilon}{2}+\delta}$ and for every $z\in Z$ the point $\bar z\in X$ to be a choice of lift of $z$, 
\item $N_+$ to be the union of the balls $B(z,(b+1)\delta)$ with $z\in Z$,
\item $N_0:=N_+\cap \overline{(M_-)_{\epsilon/2}}$,
\item $W\subset Z\times\Gamma'$ to be the subset of pairs 
 $(z,\gamma)$ for which
\[V_{z,\gamma}:=\pr\bigl(B(\bar z,(b+1)\delta)\cap \overline{\{d_\gamma<\epsilon_\gamma\})_{\epsilon/2}}\bigr)\ne \emptyset,\]
\item $\calV$ to be the family of sets $V_{z,\gamma}$ indexed over $W$,
\item $\calU$ to be the family of sets
\[\bigl(B(z, (b+1)\delta)\bigr)_{z\in Z}\quad\mbox{and}\quad \bigl(V_{z,\gamma}\bigr)_{(z,\gamma)\in W}\]
indexed over $Z\sqcup W$.
\end{itemize}

The family $\calV$ is an open cover of $N_0$ in the sense of
\S\ref{sub: revisiting nerve}, that is, the sets in $\calV$ are not necessarily open in $N_0$ but the union
of their $N_0$-interiors cover $N_0$. The family $\calU$ is an open cover of $N_+$.
Both $\calV$ and $\calU$ are good since 
their elements are convex.

In the sequel we refer to the deformation retract $f$ constructed in the previous step. The deformation retract at time $t$ is denoted by $f_t$.

Let $z\in Z$. Since
$(b+1)\delta<\epsilon/2$ and $d(z, M_-)\ge \epsilon/2+\delta$
we get $B(z,(b+1)\delta)\cap M_-=\emptyset$, thus $B(z,(b+1)\delta)\subset M_+$ for every $z\in Z$. Hence
\[ N_+\subset M_+.\]
By Lemma~\ref{closeN} we have
\[ M_+'\subset N_+\]
and hence also $\partial M_+'\subset N_0$.


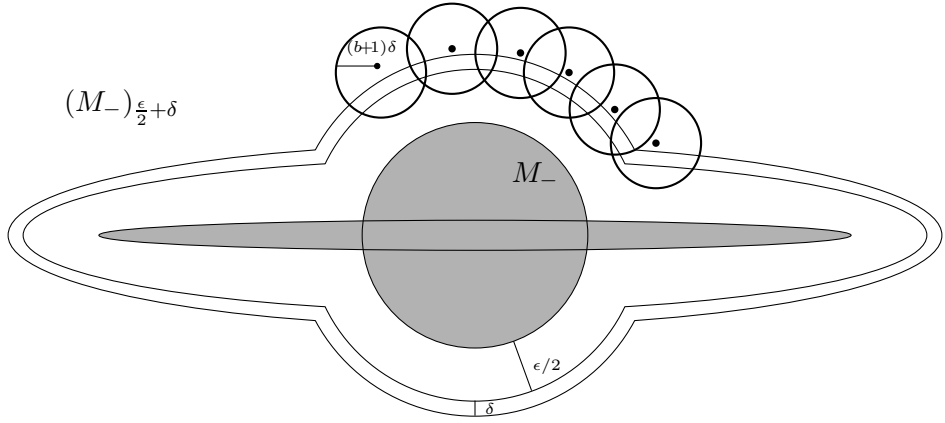
\begin{figure}[h]
\centering
\begin{tikzpicture}


    \draw[thick] (0,0) circle (2.4cm);
	\draw[thick] (0,0) ellipse (6.2cm and 1.2cm);
    \fill[white] (0,0) circle (2.4cm);
    \fill[white] (0,0) ellipse (6.2cm and 1.2cm);
    \draw[thick] (0,0) circle (2.2cm);
	\draw[thick] (0,0) ellipse (6cm and 1cm);
    \fill[white] (0,0) circle (2.2cm);
    \fill[white] (0,0) ellipse (6cm and 1cm);

	\fill[color=gray!60] (0,0) circle (1.5cm);
	\fill[color=gray!60] (0,0) ellipse (5cm and 0.2cm);
	\draw (0,0) circle (1.5cm);
	\draw (0,0) ellipse (5cm and 0.2cm);

    \draw[thick] (27: 2.7cm) circle (0.6cm);
    \filldraw[thick] (27: 2.7cm) circle (1pt);

	\draw[thick] (42: 2.5cm) circle (0.6cm);
	\filldraw[thick] (42: 2.5cm) circle (1pt);

    \draw[thick] (60:2.5cm) circle (0.6cm);
    \filldraw[thick] (60:2.5cm) circle (1pt);

    \draw[thick] (76:2.5cm) circle (0.6cm);
    \filldraw[thick] (76:2.5cm) circle (1pt);

    \draw[thick] (97:2.5cm) circle (0.6cm);
    \filldraw[thick] (97:2.5cm) circle (1pt);

    \draw[thick] (120:2.5cm) circle (0.6cm);
    \filldraw (120:2.6cm) circle (1pt);


    \draw (120:2.6cm) -- +(-0.55cm, 0) node[above right] { $\scriptscriptstyle (b\!+\!1)\delta$};

    \draw (-70:1.5cm) -- (-70:2.2cm) node [midway, right] { $\scriptscriptstyle \epsilon/2$};

    \draw (-90:2.2cm) -- (-90:2.4cm) node [midway, right] { $\scriptscriptstyle \delta$};


    \node at (0.8,0.8) {$M_-$};

    \node at (160:5cm) {$(M_-)_{\frac{\epsilon}{2}+\delta}$};
\end{tikzpicture}
    \caption{A schematic picture of the situation in the proof of Theorem~\ref{thm:HV}.}
\end{figure}

\begin{lemma}
The inclusions $M_+'\hookrightarrow N_+$ and $\partial M_+'\hookrightarrow N_0$
are homotopy equivalences.
\end{lemma}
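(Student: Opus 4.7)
The plan is to show that the strong deformation retract $f\colon M_+\times[0,1]\to M_+$ constructed in Lemma~\ref{lem: deformation retract via vector field} restricts to strong deformation retracts $N_+\times[0,1]\to N_+$ with image in $M_+'$ at time one, and $N_0\times[0,1]\to N_0$ with image in $\partial M_+'$ at time one. This immediately implies that both inclusions are homotopy equivalences.

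For the first inclusion, the key input is the stability property (ii) of Lemma~\ref{lem: deformation retract via vector field}. I would first note that every $z\in Z$ satisfies $z\in M_+'$ and $d(z,(M_-)_{\epsilon/2})\geq\delta$, since $Z$ was chosen in $M\setminus (M_-)_{\epsilon/2+\delta}$. Hence the hypotheses of (ii) apply to $y=z$, and each ball $B(z,(b+1)\delta)$ is $f$-stable. Since $N_+=\bigcup_{z\in Z} B(z,(b+1)\delta)$, the map $f$ restricts to $N_+\times[0,1]\to N_+$. It remains a strong deformation retract onto $M_+'\subset N_+$ because $f_1(M_+)\subset M_+'$ and $f_t|_{M_+'}=\id$.

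For the second inclusion, I would combine the stability above with the flow's monotone behavior relative to $M_-$. Namely, property (b) in the proof of Lemma~\ref{lem: deformation retract via vector field} asserts that $t\mapsto d(c(x,t),\tilde M_-)$ has lower Lipschitz constant $1$ wherever this distance lies in $(0,\epsilon/2)$; in particular, it is non-decreasing along flow lines starting in $\overline{(M_-)_{\epsilon/2}}\cap M_+$, and at the stopping time $T(x)$ it equals exactly $\epsilon/2$. Therefore for $x\in N_0$, on the one hand $f(x,s)$ remains in $N_+$ by stability, and on the other hand $f(x,s)\in\overline{(M_-)_{\epsilon/2}}$ since $d(f(x,s),M_-)\le\epsilon/2$ throughout $s\in[0,1]$. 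Hence $f$ restricts to $N_0\times[0,1]\to N_0$. Since $f_1$ sends the region $\{d(\cdot,M_-)\le\epsilon/2\}\cap M_+$ into $\partial M_+'$, and $f_t$ is the identity on $\partial M_+'\subset N_0$, this gives a strong deformation retract of $N_0$ onto $\partial M_+'$.

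The only subtle point is verifying that the flow really preserves $\overline{(M_-)_{\epsilon/2}}$; aside from this, everything follows essentially formally from Lemma~\ref{lem: deformation retract via vector field}. I expect no technical obstacle beyond bookkeeping about which conditions in that lemma's proof are being invoked.
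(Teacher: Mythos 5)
Your argument is correct and is essentially the paper's own proof: the paper likewise uses the ball-stability property (ii) to see that $f_t$ preserves each $B(z,(b+1)\delta)$ and hence $N_+$ (so $f$ restricts to a deformation retraction of $N_+$ onto $M_+'$), and then the observation that $f_t$ also preserves $\overline{(M_-)_{\epsilon/2}}$ to run the same argument for $\partial M_+'\hookrightarrow N_0$. One cosmetic point: since $Z\subset M\setminus (M_-)_{\epsilon/2+\delta}$ and the $\tau$-neighborhood is defined with $\le$, one actually has $d(z,(M_-)_{\epsilon/2})>\delta$ strictly, which is exactly the hypothesis required in property (ii).
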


\begin{proof}
We claim that
\[ r\colon N_+\hookrightarrow M_+\xrightarrow{f_1} M_+'\]
is a homotopy inverse of the inclusion $j\colon M_+'\hookrightarrow N_+$. It is clear that $r\circ j=\id$. By the stability of balls under $f$ (see Lemma~\ref{lem: deformation retract via vector field}) we
have \[f_t(B((b+1)\delta, z))\subset B((b+1)\delta, z)~\text{ for every $t\in [0,1]$}\] since this is obviously true  for $t=0$. In particular, $f_t$ preserves $N_+$ for every $t\in [0,1]$. Thus
$f$ restricts to a homotopy between $\id$ and $j\circ r\colon N_+\to N_+$. Since $f_t$ also restricts to $\overline{(M_-)_{\epsilon/2}}$ we may argue similarly for $\partial M_+'\hookrightarrow N_0$.
\end{proof}

Consider the following commutative diagram
\begin{equation}\label{eq: main diagram for nerve}
		\begin{tikzcd}
		\partial M_+\arrow[hook]{d}\arrow{r}{\cong}[swap]{f_1\vert_{\partial M_+}} & \partial M_+'\arrow[hook]{d}\ar[hook]{r}{\simeq} & N_0\arrow[hook]{d}{j} & (N_0)_\calV\arrow{l}[swap]{\simeq}\arrow[hook]{d}{k}\arrow{r}{\simeq} & N(\calV)\arrow[hook]{d}\\
		M_+\arrow{r}{\simeq}[swap]{f_1}& M_+'\arrow[hook]{r}{\simeq} & N_+ & (N_+)_{\calU}\arrow{l}[swap]{\simeq}\arrow{r}{\simeq} & N(\calU)
		\end{tikzcd}
	\end{equation}
where $\cong$ indicates a homeomorphism, $\simeq$ indicates a homotopy equivalence and the hooked arrows are inclusions. The horizontal maps on the right hand side are homotopy equivalences since the open covers $\calV$ and $\calU$ are good (see Theorem~\ref{thm: htp equivalence for nerve of good cover}).
It was shown in Step~$3$ that its restriction $f_1\colon \partial M_+\to \partial M_+'$ is a homeomorphism, in particular, a homotopy equivalence. The outer vertical maps
in~\eqref{eq: main diagram for nerve} are cofibrations, being the inclusion
of the boundary of a manifold and the inclusion of a subcomplex, respectively. We avoid proving that the inner vertical maps are cofibrations, in which case we could appeal directly to the relative nerve lemma as a blackbox (see Lemma~\ref{lem: relative nerve lemma}). Instead we will use below the method of the proof of the relative nerve lemma.

We factorize $j$ into a cofibration $j'$ and a homotopy equivalence $r$ via the
mapping cylinder $\cyl(j)$ of $j$:
\[j\colon N_0\xrightarrow{j'}\cyl(j)\xrightarrow{r} N_+\]
We choose a homotopy inverse $r'\colon N_+\to \cyl(j)$ of~$r$.
Next consider the following three squares:
\begin{equation*}
\begin{tikzcd}
     \partial M_+\arrow[r, "\simeq"]\arrow[d,hook] &N_0\arrow[d, "j",hook]\\	
     M_+\arrow[r, "F"',"\simeq"] & N_+
\end{tikzcd}\quad
\begin{tikzcd}
     \partial M_+\arrow{r}{\simeq}\arrow[hook]{d} &N_0\arrow[hook]{d}{j'}\\	
     M_+\arrow{r}[swap]{r'\circ F}{\simeq} & \cyl(j)\quad
\end{tikzcd}\quad
\begin{tikzcd}
     \partial M_+\arrow{r}{\simeq}\arrow[hook]{d} &N_0\arrow[hook]{d}{j'}\\	
     M_+\arrow{r}[swap]{F'}{\simeq} & \cyl(j)\quad
\end{tikzcd}
\end{equation*}
The left square arises from composing the left two squares in~\eqref{eq: main diagram for nerve} and thus commutes. The middle
square, and in particular its lower horizontal arrow, arises from composition with $r'$. The middle square only commutes up to homotopy. From an application of Lemma~\ref{lem: making diagram commutative} we obtain the commutative right square. By Theorem~\ref{thm: cofibrations and homotopy equivalences} applied to the right square we obtain a homotopy equivalence of pairs
\begin{equation}\label{eq: first pair homotopy equivalence}
	 (M_+, \partial M_+)\simeq (\cyl(j), N_0).
\end{equation}
Next we apply the mapping cylinder construction to the map $k$ in~\eqref{eq: main diagram for nerve} and get a factorization of $k$
as
\[ (N_0)_\calV\xrightarrow{k'}\cyl(k)\xrightarrow{s} (N_+)_{\calU}.\]
By pre-composing the lower horizontal map in the third square of~\eqref{eq: main diagram for nerve} with the homotopy equivalence $s$ and post-composing it with $r'$ we obtain a homotopy-commutative
diagram with vertical cofibrations:
\[\begin{tikzcd}
 N_0\arrow[hook]{d}{j'} & (N_0)_\calV\arrow{l}[swap]{\simeq}\arrow[hook]{d}{k'}\\
		 \cyl(j) & \cyl(k)\arrow{l}[swap]{\simeq}
\end{tikzcd}
\]
As above, we can use Lemma~\ref{lem: making diagram commutative} to replace its lower horizontal arrow by another homotopy equivalence making the square strictly commute and then apply
Theorem~\ref{thm: cofibrations and homotopy equivalences} to get a homotopy equivalence of pairs
\begin{equation}\label{eq: second pair homotopy equivalence}
 (\cyl(j), N_0)\simeq (\cyl(k), (N_0)_\calV). 	
\end{equation}
By composing the lower horizontal arrow in the right hand square of~\eqref{eq: main diagram for nerve} with $s$ we obtain the commutative diagram
\[\begin{tikzcd}
     (N_0)_\calV\arrow[hook]{d}{k'}\arrow{r}{\simeq} & N(\calV)\arrow[hook]{d}\\
     \cyl(k)\arrow{r}{\simeq} & N(\calU)
\end{tikzcd}\]
to which we apply Theorem~\ref{thm: cofibrations and homotopy equivalences}. We thus obtain a homotopy equivalence of
pairs
\begin{equation}\label{eq: third pair homotopy equivalence}
	(\cyl(k), (N_0)_\calV)\simeq (N(\calU), N(\calV)).
\end{equation}
The desired homotopy equivalence of pairs
\begin{equation}\label{eq: final pair homotopy equivalence}
	(M_+,\partial M_+)\simeq (N(\calU), N(\calV))
\end{equation}
is obtained by combining~\eqref{eq: first pair homotopy equivalence}
and~\eqref{eq: second pair homotopy equivalence} and~\eqref{eq: third pair homotopy equivalence}.

Referring to~\eqref{eq: multiplicity function} we define two constants:
\begin{align*}
c&:=\bigl(N(d,\epsilon, 4\epsilon_0)+1\bigr)/\vol_{\mathbb{E}^d}\bigl(B(0,\delta/2)\bigr)\\
D&:=\bigl(N(d,\epsilon, 4\epsilon_0)+1\bigr)N(d,\delta,2(b+1)\delta)	
\end{align*}

The proof of Theorem~\ref{thm:HV} will be completed by the following lemma.

\begin{lemma}
The nerve $N(\calU)$ is a $(D,c\vol(M_+))$-simplicial complex. 	
\end{lemma}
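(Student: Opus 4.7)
The plan is to verify the two defining conditions of a $(D,c\vol(M_+))$-simplicial complex separately: first the vertex count, then the local degree. The vertices of $N(\calU)$ are indexed by the disjoint union $Z\sqcup W$, so I would bound $|Z|$ and $|W|$ in turn, and then bound how many vertices can be joined to any given one.

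First I would control $|Z|$ by a standard packing argument. Because $z\mapsto \bar z$ lifts each $z\in Z$ into the region $X\setminus (\tilde M_-)_{\epsilon/2+\delta}\subset \tilde M_{\ge\epsilon}$, every $z\in Z$ sits in $M_+$ at distance $>\delta$ from $M_-$, so the open $\delta/2$-balls around the points of $Z$ are pairwise disjoint and contained in $M_+$. By the Bishop--G\"unther lower bound of Theorem~\ref{thm:comparison} each such ball has volume at least $\vol_{\mathbb{E}^d}(B(0,\delta/2))$, giving
\[
|Z|\le \frac{\vol(M_+)}{\vol_{\mathbb{E}^d}(B(0,\delta/2))}.
\]
Next I would bound, for a fixed $z\in Z$, the number of $\gamma\in\Gamma'$ with $(z,\gamma)\in W$. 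Non-emptiness of $V_{z,\gamma}$ forces the existence of $x\in X$ with $d(\bar z,x)\le (b+1)\delta$ and $d(x,\{d_\gamma<\epsilon_\gamma\})\le\epsilon/2$; since the displacement function $d_\gamma$ is $2$-Lipschitz, this yields
\[
d_\gamma(\bar z)\le \epsilon_\gamma+2(b+1)\delta+\epsilon\le 4\epsilon_0,
\]
using $\epsilon_\gamma\le\epsilon_0$ and the choice $\delta\le\epsilon/(2(b+1))$ from Step~2. Corollary~\ref{cor:2.5} then bounds the number of such $\gamma$ by $N(d,\epsilon,4\epsilon_0)$, so $|W|\le N(d,\epsilon,4\epsilon_0)\cdot |Z|$. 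Combining the two inequalities gives $|Z|+|W|\le c\vol(M_+)$ with $c$ as defined before the lemma.

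For the degree bound, note that every element of $\calU$, whether of the form $B(z,(b+1)\delta)$ or $V_{z,\gamma}\subset B(\bar z,(b+1)\delta)$ (after projection), is contained in a ball of radius $(b+1)\delta$ about some $z\in Z$. So if two vertices of $N(\calU)$ are joined by an edge, the associated balls must intersect, forcing the centers in $Z$ to lie within $2(b+1)\delta$ of each other. Since $Z$ is $\delta$-discrete in $M$, Lemma~\ref{lem:ess-vol} (applied upstairs in $X$ at a single lift) bounds the number of such centers by $N(d,\delta,2(b+1)\delta)$. For each such neighboring center $z'$, the vertex of type $z'$ and at most $N(d,\epsilon,4\epsilon_0)$ vertices of type $(z',\gamma')\in W$ can appear, by the previous paragraph. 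Hence the degree of any vertex is at most $(N(d,\epsilon,4\epsilon_0)+1)\,N(d,\delta,2(b+1)\delta)=D$.

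The only conceptual subtlety I anticipate is the passage from ``$V_{z,\gamma}\ne\emptyset$'' to an estimate on $d_\gamma(\bar z)$, which is the reason the constants $b$ and $\delta$ were pinned down in Step~2; everything else is a routine combination of the volume comparison theorem and Corollary~\ref{cor:2.5}. Once both the vertex count and the degree bound are in hand, the two bullet points in the definition of a $(D,c\vol(M_+))$-simplicial complex are satisfied, completing the proof.
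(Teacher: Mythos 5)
Your proposal is correct and follows essentially the same route as the paper: the packing bound via Theorem~\ref{thm:comparison} for $\#Z$, the estimate $d_\gamma(\bar z)\le\epsilon_\gamma+\epsilon+2(b+1)\delta<4\epsilon_0$ combined with Corollary~\ref{cor:2.5} for $\#W$, and the $2(b+1)\delta$-proximity of centers together with Lemma~\ref{lem:ess-vol} for the degree bound $D=(N(d,\epsilon,4\epsilon_0)+1)N(d,\delta,2(b+1)\delta)$. The only difference is that you spell out details (lifting to $X$, the $2$-Lipschitz property of $d_\gamma$) that the paper leaves implicit.
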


\begin{proof}
First we show that the number of vertices in $N(\calU)$, that is $\# Z+\#W$, is at most $c\vol(M_+)$.
Since $Z$ is a $\delta$-discrete subset of $M_+$,
Theorem~\ref{thm:comparison} implies that
 \[\# Z\leq \vol(M_+)/\vol_{\mathbb{E}^d}(B(0,\delta/2)).\]
Let $z\in Z$. If $\gamma\in\Gamma'$ is such that $B(\bar z, (b+1)\delta)$ intersects $\overline{(\{d_\gamma<\epsilon_\gamma\})_{\epsilon/2}}$ then
\[
 d_\gamma(\bar z)\le\epsilon_\gamma+2(\frac{\epsilon}{2}+(b+1)\delta)<4\epsilon_0.
\]
Hence Corollary~\ref{cor:2.5} implies that
\begin{equation}\label{eq: multiplicity with fixed ball}
 \#\bigl\{\gamma\in\Gamma'~:~B(\bar z, (b+1)\delta)\cap \overline{(\{d_\gamma<\epsilon_\gamma\})_{\frac{\epsilon}{2}}}\ne\emptyset\bigr\}\le
N(d,\epsilon, 4\epsilon_0).
\end{equation}
Thus $\#W \le \#Z\cdot N(d,\epsilon, 4\epsilon_0)$
from which the desired bound $\#Z+\#W\le c\vol(M_+)$ follows. \medskip\\
Next we show that the degree of a vertex in $N(\calU)$ is at most~$D$. Consider an element $U_0\in\calU$
which represents a vertex in $N(\calU)$. Let $z_0\in Z$ or $(z_0,\gamma_0)\in W$, respectively, be the index of the element $U_0\in\calU$.
The degree of $U_0$ is bounded
using~\eqref{eq: multiplicity with fixed ball} by
\begin{align*}
\deg U_0 \le & \#\bigl\{z\in Z : B(z_0, (b+1)\delta)\cap B(z, (b+1)\delta)\ne\emptyset\bigr\}\\&
	+\#\bigl\{ (z,\gamma)\in Z\times \Gamma' : B(\bar z_0, (b+1)\delta)\cap \overline{\{d_\gamma<\epsilon_\gamma\})_{\frac{\epsilon}{2}}}\ne\emptyset~\text{ and }\\
 &\omit\hfill $B(z_0, (b+1)\delta)\cap B(z, (b+1)\delta)\ne\emptyset	\bigr\}$\\
\le & \bigl(N(d,\epsilon, 4\epsilon_0)+1\bigr)\cdot \#\bigl\{z\in Z : B( z_0, (b+1)\delta)\cap B(z, (b+1)\delta)\ne\emptyset\bigr\}.
\end{align*}
The latter factor is bounded the cardinality of $Z\cap B(z_0, 2(b+1)\delta)$. With Lemma~\ref{lem:ess-vol}
the desired bound $\deg U_0\le D$ follows.
\end{proof}

\section{A simplicial decomposition of negatively curved manifolds} \label{sec:mainproof}

The main goal of this section is to prove the following theorem. 

\begin{theorem}[see Theorem~\ref{thm:simplicial-decomposition in intro}]\label{thm:simplicial-decomposition}
Let $d>1$ be an integer. There are constants $D, c>0$ with the following properties. Every complete $d$-dimensional Riemannian manifold~$M$ of normalized bounded negative curvature has a compact $d$-dimensional submanifold $M_+$ with boundary $\partial M_+$ such that the pair $(M_+,\partial M_+)$ is homotopy equivalent to a $(D,c\cdot\vol{M})$-simplicial pair $(\mathcal{R},\mathcal{R}_0)$ and the closure of $M\setminus M_+$ consists of at most $c\cdot\vol(M)$ many connected components, each of which is either homeomorphic to its boundary times $[0,\infty)$ or to a $D^{d-1}$-bundle over $S^1$. \end{theorem}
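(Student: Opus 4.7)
The strategy is to reduce Theorem~\ref{thm:simplicial-decomposition} to an application of Theorem~\ref{thm:HV}, supplying the latter with a compact submanifold $M_+$ and auxiliary data $(\Gamma',\{\epsilon_\gamma\})$ coming from the Margulis thick-thin decomposition. I begin by invoking the Margulis lemma in normalized bounded negative curvature: there is $\epsilon_0=\epsilon_0(d)>0$ such that for every $x\in X$ the subgroup $\Lambda_x:=\langle\gamma\in\Gamma : d_\gamma(x)<4\epsilon_0\rangle$ is virtually nilpotent; in strictly negative curvature, each non-trivial $\Lambda_x$ is either virtually cyclic (axial case) or parabolic (stabilizing a horosphere through $x$). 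Fix $\epsilon\in(0,\epsilon_0/20)$ and let $\calC$ denote the set of connected components of the $\epsilon$-thin part; each $C\in\calC$ has a Margulis subgroup $\Lambda_C<\Gamma$, well-defined up to conjugation, which in turn contains a canonical abelian subgroup $A_C$ (the cyclic core in the axial case; the translation lattice on a horosphere in the parabolic case). Take $\Gamma'$ to be the $\Gamma$-conjugacy closure of $\bigsqcup_C(A_C\setminus\{1\})$.

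For $\gamma\in\Gamma'$ I assign, in a conjugation-invariant manner, a constant $\epsilon_\gamma\in[\epsilon,\epsilon_0]$ such that the sub-level set $\{d_\gamma<\epsilon_\gamma\}\subset X$ covers the lift of the associated component $C$ while remaining inside its $\epsilon_0$-neighborhood. Define $\tilde M_-:=\bigcup_{\gamma\in\Gamma'}\{d_\gamma<\epsilon_\gamma\}$ and $M_+:=M\setminus\pr(\tilde M_-)$; by smoothness and strict convexity of $d_\gamma$ on the Hadamard manifold $X$, and with a generic choice of the $\epsilon_\gamma$'s inside $[\epsilon,\epsilon_0]$, the resulting $M_+$ is a compact $d$-dimensional submanifold with smooth boundary contained in $M_{\ge\epsilon}$. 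What remains is to verify the commutativity hypothesis of Theorem~\ref{thm:HV}: for $x\notin\tilde M_-$ the subgroup $H_x:=\langle\gamma\in\Gamma':d_\gamma(x)\le 4\epsilon_\gamma\rangle$ must be abelian. Each such $\gamma$ satisfies $d_\gamma(x)\le 4\epsilon_0$ and therefore lies in $\Lambda_x$; the choice of $A_C$ and the cut-offs $\epsilon_\gamma$ is designed so that all such $\gamma$ actually lie in a single abelian $A_C$. Making this last claim rigorous in the variable-curvature parabolic case is the main obstacle I anticipate: one must coherently pick conjugation-invariant abelian subgroups of the parabolic Margulis groups so that two elements of $\Gamma'$ simultaneously having small displacement at $x\notin\tilde M_-$ are forced to commute, which I would handle by combining the Margulis alternative with convexity of the displacement functions on $X$.

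Once these hypotheses are in place, Theorem~\ref{thm:HV} delivers the $(D,c\vol(M_+))$-simplicial pair $(\mathcal R,\mathcal R_0)$ homotopy equivalent to $(M_+,\partial M_+)$, with $D$ and $c$ depending only on $d$. The components of $\overline{M\setminus M_+}$ correspond, up to a standard identification, to elements of $\calC$: when $A_C$ is cyclic the component is a Margulis tube, identified via the exponential map with the normal disk bundle of its core closed geodesic and hence a $D^{d-1}$-bundle over $S^1$; when $A_C$ is parabolic, the component is foliated by pieces of horospheres and is therefore diffeomorphic to its boundary times $[0,\infty)$ through the radial flow transverse to the horospheres. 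Finally, Bishop--Gromov volume comparison shows that each tube and each cusp collar contributes a definite amount of volume to $M$, so the total number of components of $\overline{M\setminus M_+}$ is at most $c\vol(M)$, completing the proof.
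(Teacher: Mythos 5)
Your overall strategy matches the paper's: reduce to Theorem~\ref{thm:HV} by choosing a conjugation-invariant $\Gamma'\subset\Gamma$ and radii $\epsilon_\gamma\in[\epsilon,\epsilon_0]$, then read off the tube/cusp structure of $\overline{M\setminus M_+}$ from the thick--thin decomposition. But the step you yourself flag as ``the main obstacle'' is a genuine gap, and the way you propose to close it does not work. In variable negative curvature (already for cusps modelled on complex hyperbolic space, whose curvature can be normalized into $[-1,0)$), the parabolic Margulis group $N_\zeta$ is a lattice in a possibly non-abelian nilpotent group, e.g.\ of Heisenberg type. There is then no ``translation lattice on a horosphere'': no conjugation-invariant abelian subgroup $A_C<N_\zeta$ acts cocompactly on the horosphere. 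If you take $A_C$ to be, say, the center of $N_\zeta$, two problems arise. First, the sub-level sets $\{d_\gamma<\epsilon_\gamma\}$, $\gamma\in A_C$, need not cover the $\epsilon$-thin component (central and non-central elements have displacement decaying at different rates into the cusp), so $M_+$ is not contained in $M_{\ge\epsilon}$, is not compact, and the complement is not a standard cusp; the paper's verification of $M_+\subset M_{\ge\epsilon}$ essentially uses that \emph{every} small parabolic $\gamma$ has a power $\gamma^k\in N_\zeta$ with $k\le m(d)$, which forces one to put all of $N_\zeta\setminus\{1\}$ into $\Gamma'$ and to take $\epsilon=\epsilon(d)/(4m(d)17^d)$. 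Second, even granting a covering, you still have to show that all elements of $\Gamma'$ with $d_\gamma(x)\le 4\epsilon_\gamma$ at a point $x\notin\tilde M_-$ lie in a \emph{single} abelian group, and ``combining the Margulis alternative with convexity of the displacement functions'' is not an argument: the Margulis lemma only places them in a common nilpotent $N_\zeta$.

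The paper's resolution is different and is the actual content of the proof: it keeps the whole maximal normal nilpotent subgroup $N_\zeta$ inside $\Gamma'$ but grades the radii by the upper central series, setting $i(\gamma)=\min\{i:\gamma\in C^i(N_\zeta)\}$ and $\epsilon_\gamma=\epsilon(d)/(4\cdot 17^{i(\gamma)})$ (and $i(\gamma)=0$ for loxodromics), so that more central elements get \emph{larger} thresholds; the nilpotency degree is bounded by $d-1$ via cohomological dimension, which keeps all $\epsilon_\gamma\ge\epsilon$. Commutativity at $x\notin\tilde M_-$ is then forced by the displacement estimate $d_{[\alpha,\beta]}(x)\le 2d_\alpha(x)+2d_\beta(x)\le 8(\epsilon_\alpha+\epsilon_\beta)<\epsilon_{[\alpha,\beta]}$: if $\alpha,\beta$ did not commute, $[\alpha,\beta]$ would be strictly more central, hence would have a much larger threshold, putting $x$ into $\tilde M_-$ --- a contradiction. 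Without this weighting device (or some substitute), your construction does not verify the hypotheses of Theorem~\ref{thm:HV}, so the proposal as written is incomplete at its crucial point; the remaining parts (compactness and smooth genericity of $\partial M_+$, the tube/cusp description, and the volume count of components) are consistent with the paper's Theorem~\ref{thm:think-thin}.
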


A similar result, without the information on $\partial M^+$ is given in \cite[\S 4,7,8]{hv}. The preparation made in the previous section allows us to control the inclusion of the boundary $\partial M^+$ in $M^+$. An additional complication comes from the fact that we deal with general negatively curved manifolds whose universal cover is not necessarily homogeneous. This is the reason we will not work with the usual thick-thin decomposition but with a variant which gives `more weight' to loops corresponding to elements with larger centralisers.  

\begin{remark}\label{rem: boundary restriction}
Note that the connected components of $\partial M_+$ correspond to the connected components of $\mathcal{R}_0$.
 In particular, if $N\subset\partial{M}_+$ is a union of several connected components then $(M_+,N)$ is homotopy equivalent to a $(D,c\cdot\vol{M})$ simplicial pair $(\mathcal{R},\mathcal{R}_0')$. 
\end{remark}

\subsection{The Margulis lemma}\label{sub: margulis}
Fix $d>1$. Let $X$ be a $d$-dimensional Hadamard space and $\Gamma$ a discrete group of isometries of $X$. For $\gamma\in\Gamma$ we denote by $d_\gamma$  the  displacement function $x\mapsto d(x,\gamma\cdot x)$. For $\epsilon>0$ and $x\in X$ we let
$$
 \Sigma_{x,\epsilon}=\{\gamma\in \Gamma\setminus\{1\}:d_\gamma(x)<\epsilon\},~\text{and}~\Gamma_{x,\epsilon}=\langle\Sigma_x\rangle.
$$

In negative curvature, the classical Margulis lemma has the following form:

\begin{theorem}[Margulis lemma {\cite[10.3]{BGS}}]\label{thm: margulis lemma}
Given $d>1$ there are $\epsilon(d)>0$ and $m=m(d)\in \mathbb{N}$ such that if $X$ is a $d$-dimensional Hadamard manifold with normalized bounded negative curvature then for every torsion-free discrete group $\Gamma$ of isometries of $X$, any $\epsilon\leq \epsilon(d)$ and every $x\in X$, the group $\Gamma_{x,\epsilon}$ is either
\begin{itemize}
\item trivial,
\item a cyclic group generated by a loxodromic element, or
\item consisting of parabolic elements sharing a common fixed point at $X(\infty)$ and admitting a normal nilpotent subgroup of index $\le m$.
\end{itemize}
\end{theorem}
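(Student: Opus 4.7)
The plan is to follow the classical Zassenhaus--Kazhdan--Margulis strategy adapted to variable negative curvature in the spirit of \cite{BGS}. The core analytic input is a \emph{commutator contraction} estimate for isometries of the Hadamard manifold $X$: there exists a constant $K=K(d)$ such that whenever $d_g(x),d_h(x)<\eta$ with $\eta$ small in terms of the curvature bounds, one has $d_{[g,h]}(x)\le K\cdot d_g(x)\cdot d_h(x)$. This is obtained by working in the Lie group $\Isom(X)$ near the identity, trivializing via the exponential map on a ball around $x$, and expanding the group commutator as a second order term in the Lie bracket whose coefficients are uniformly bounded in terms of the curvature pinching.

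Given this, the proof proceeds in two stages. First, the Zassenhaus step: choose $\epsilon(d)$ so small that $K\epsilon(d)<1/2$. Construct a ``short basis'' of $\Gamma_{x,\epsilon}$ inductively, picking at each step an element of $\Sigma_{x,\epsilon}$ of minimal displacement at $x$ not lying in the subgroup generated by previous choices. By Theorem~\ref{thm:comparison} and a packing argument for the corresponding lifts $\gamma_i\cdot x$ inside the ball $B(x,\epsilon)$, the number of such generators is bounded by a constant $m=m(d)$. Iterating the commutator estimate along the descending central series produces elements whose displacements at $x$ contract geometrically; by discreteness of $\Gamma$, they must eventually be trivial. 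This shows that a subgroup $N\normal \Gamma_{x,\epsilon}$ of index at most $m$ is nilpotent.

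Second, one reads off the three structural cases using torsion-freeness and the classification of isometries of a Hadamard manifold with sectional curvature bounded above by a negative constant. Since $\Gamma$ is torsion-free, no element of $N$ is elliptic, so each is either loxodromic or parabolic. If $N$ contains a loxodromic element $g$ with axis $\ell$, then climbing the upper central series and using that in negative curvature the centralizer of a loxodromic isometry preserves its (unique) axis, one sees that all of $N$ preserves $\ell$; torsion-freeness forces $N$, and then by the same centralizer argument all of $\Gamma_{x,\epsilon}$, into the translation subgroup of $\Stab(\ell)$. This translation group injects discretely into $\bbR$, so $\Gamma_{x,\epsilon}$ is cyclic. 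Otherwise $N$ is purely parabolic; since a parabolic isometry of a negatively curved Hadamard manifold has a unique fixed point at infinity, commuting parabolics share it, and induction on the nilpotency class shows $N$ fixes a common $\xi\in X(\infty)$. The full $\Gamma_{x,\epsilon}$ then normalizes the (finite) set of parabolic fixed points of $N$, and by index $m$ and torsion-freeness it must itself fix $\xi$, yielding the third case.

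The main obstacle I anticipate is the commutator contraction estimate in variable curvature: in constant curvature one uses explicit Lie-theoretic computations in $\Isom(\bbH^d)$, but here one must work intrinsically, controlling the exponential chart on a ball around $x$ using only the two-sided sectional curvature bounds. Once this uniform estimate is in place, the Zassenhaus iteration and the subsequent geometric trichotomy are routine and yield constants $\epsilon(d)$, $m(d)$ depending only on the dimension.
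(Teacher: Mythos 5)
The paper does not prove this statement at all; it is quoted verbatim from \cite{BGS}*{10.3}, so your proposal has to stand on its own. It does not, because its ``core analytic input'' is false as stated: the displacement of a commutator is \emph{not} controlled by the product of displacements. The rotational (holonomy) part of an isometry is invisible to $d_g(x)$ but survives at first order in commutators. Concretely, already in $\bbH^3$ take $h$ a translation of length $\delta$ along an axis through $x$, and $g$ a loxodromic ``screw motion'' whose axis passes through $x$, with tiny translation length but rotation angle close to $\pi$; then $d_g(x)$ is arbitrarily small while $g h g^{-1}$ translates along an axis turned by almost $\pi$, so $d_{[g,h]}(x)\approx 2\delta$, which is nowhere near $K\,d_g(x)\,d_h(x)$. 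Such elements occur in torsion-free discrete groups (short closed geodesics with nontrivial rotation angle), so the estimate cannot be rescued within the class of groups considered. The correct Zassenhaus-type inequality, which is what \cite{BGS} actually uses, is for a norm that measures both the displacement of $x$ and the distance of the differential at $x$ (compared via parallel transport) from the identity; with displacement alone, the geometric contraction you want in the iteration simply fails, and your choice ``$K\epsilon(d)<1/2$'' has nothing to bite on. Relatedly, the suggestion to expand the group commutator in the Lie algebra of $\Isom(X)$ is not available with uniform constants: for a generic negatively curved $X$ the isometry group is discrete, and all estimates must be made intrinsically at $x$ via Rauch/Jacobi-field comparison, which is exactly where the holonomy term enters.

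The second structural problem is the index bound. Bounding the number of ``short basis'' generators does not yield a nilpotent subgroup of index $\le m$, and in any case the packing bound you invoke is unavailable: the cardinality of $\Sigma_{x,\epsilon}$ is \emph{not} bounded in terms of $d$ (think of the cyclic group of a very short loxodromic, which has many elements of displacement $<\epsilon$ at $x$); the paper's Corollary~\ref{cor:2.5} only applies when $x$ lies in the thick part. In the genuine argument the bounded index $m(d)$ arises from the rotational parts: one pigeonholes the derivatives of the short elements in the compact group $\mathrm{O}(d)$, shows that the elements whose rotational part is also small generate a nilpotent group via the corrected commutator estimate, and then bounds the index by the number of small balls needed to cover $\mathrm{O}(d)$. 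None of this mechanism appears in your sketch, so the central content of the Margulis lemma is missing. Your second stage (the trichotomy: cyclic loxodromic versus parabolic with common fixed point) is essentially correct modulo standard facts, though you should also justify why $\Gamma_{x,\epsilon}$ in the parabolic case contains no loxodromic element fixing $\zeta$ (a discrete group cannot contain a parabolic and a loxodromic with a common fixed point at infinity); but these points are downstream of the part that is broken.
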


We shall refer to the second and third types as `loxodromic' and `parabolic' types respectively.
We shall refer to the constant $\epsilon(d)$ as the \emph{Margulis  constant}.


\subsection{Nilpotent subgroups of small elements}\label{sec:normal-nilpotent}
Let $X$ be a $d$-dimensional Hadamard space of normalized bounded negative curvature and
let $\epsilon\le\epsilon(d)$.

Let $\Gamma$ be a torsion free discrete group of isometries of $X$.
For every $x\in X$ we denote by $N_{x}=N_{x,\epsilon}$ the (unique) maximal normal nilpotent subgroup of $\Gamma_{x}=\Gamma_{x,\epsilon}$.
It satisfies 
\[
 N_x=\prod \{ H:  H\lhd\Gamma_{x}~\text{is normal nilpotent of index}~\le m\}.
\]
The group~$N_x$ is a nilpotent characteristic subgroup of $\Gamma_{x}$ of index $\le m$ satisfying $N_{\gamma\cdot x}=\gamma N_x\gamma^{-1}$ for every $x\in X$
and $\gamma\in\Gamma$.

When $\Gamma_x$ is of  loxodromic type we have that $N_x=\Gamma_x$. When $\Gamma_x$ is of parabolic type, it fixes a unique point $\zeta\in X(\infty)$ and preserves the horospheres around it.
In that case, we set
$$
 \Gamma_\zeta:=\langle \gamma\in\Gamma:\gamma\cdot\zeta=\zeta\rangle,
$$
We shall call such $\zeta$ a $\Gamma$-{\it parabolic} point.

Suppose now that $M=\Gamma\backslash X$ has finite volume.
Given a $\Gamma$-parabolic point $\zeta\in X(\infty)$ and fixing a horosphere $H$ centered at $\zeta$ we get that $\Gamma_\zeta$ acts cocompactly on $H$. It follows that $\Gamma_\zeta$ is finitely generated.
In particular, we see that if $c(t)$ is a geodesic in $X$ with $c(\infty)=\zeta$ then for all $t$ sufficiently large we have $\Gamma_{c(t)}=\Gamma_\zeta$ and hence we may set $N_{c(t)}=N_\zeta$. Thus $N_\zeta$ is a characteristic subgroup of $\Gamma_\zeta$ of index at most $m$. It follows in particular that $N_{\gamma\cdot\zeta}=\gamma N_\zeta\gamma^{-1}$ for every $\gamma\in \Gamma$ and $\zeta\in X(\infty)$ $\Gamma$-parabolic.
Moreover, we deduce that $N_\zeta$ acts cocompactly on $H$ and hence its cohomological dimension is $d-1$. Since the cohomological dimension of a torsion free nilpotent group coincides with the Hirsch rank ~\cite{bieri}*{Theorem~ 7.10}, and since the nilpotency degree is obviously bounded by the Hirsch rank, we obtain:

\begin{lemma}
Given a $\Gamma$-parabolic point $\zeta\in X(\infty)$,
the nilpotency degree of $N_\zeta$ is at most $d-1$.
\end{lemma}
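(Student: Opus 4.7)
The plan is essentially to assemble the ingredients already laid out in the paragraph preceding the statement. Fix a horosphere $H$ centered at $\zeta$. We have observed that $N_\zeta$ acts cocompactly on $H$, and since $\Gamma$ is assumed torsion-free, this action is also free and properly discontinuous. The horosphere $H$ is diffeomorphic to $\mathbb{R}^{d-1}$, hence contractible, so $N_\zeta\backslash H$ is a closed aspherical manifold of dimension $d-1$. It follows that $N_\zeta$ has cohomological dimension $\operatorname{cd}(N_\zeta)=d-1$. This was already recorded just above the lemma.

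Next I invoke the cited result of Bieri: for a finitely generated torsion-free nilpotent group the cohomological dimension equals the Hirsch rank. Therefore the Hirsch rank satisfies $h(N_\zeta)=d-1$.

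The last step, which is the one step actually requiring a short argument, is to prove that the nilpotency class of any finitely generated torsion-free nilpotent group $G$ is bounded above by its Hirsch rank $h(G)$. I would argue via the lower central series
\[
G=G_1\supsetneq G_2\supsetneq\dots\supsetneq G_{c+1}=\{1\},
\]
where $c$ is the nilpotency class. Passing if necessary to the torsion-free quotients $G_i/(G_{i+1}\cdot\text{tors})$ (which is unnecessary here because all successive quotients of the lower central series of a finitely generated torsion-free nilpotent group are themselves finitely generated and the rank is additive in short exact sequences), each strict inclusion $G_i\supsetneq G_{i+1}$ contributes at least $1$ to the Hirsch rank. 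Summing over the $c$ strict inclusions gives $h(G)\ge c$. Applied to $G=N_\zeta$ this yields nilpotency class at most $h(N_\zeta)=d-1$, as claimed.

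The argument is essentially a bookkeeping exercise and there is no substantive obstacle; the only point that deserves any care is the comparison between nilpotency class and Hirsch rank, and even that is a standard fact that one could simply cite (e.g.~from Segal's book on polycyclic groups) rather than reprove.
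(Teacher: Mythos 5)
Your proposal is correct and follows essentially the same route as the paper: $N_\zeta$ acts freely, properly discontinuously and cocompactly on the contractible horosphere $H$, so $\operatorname{cd}(N_\zeta)=d-1$, Bieri's theorem identifies this with the Hirsch rank, and the nilpotency class is bounded by the Hirsch rank (a step the paper simply declares obvious). The only point to tighten in your lower-central-series bookkeeping is that a strict inclusion contributes to the Hirsch rank only if the quotient is infinite, which does hold here (e.g.\ argue via the upper central series, whose successive quotients are torsion-free for a torsion-free nilpotent group, or cite the standard fact as you suggest).
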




\subsection{The thick-thin decomposition}\label{sec:thick-thin}

We will require the following variant of the classical thick-thin decomposition due to Thurston.

\begin{theorem} \label{thm:think-thin}
Fix $0<\epsilon \leq \epsilon(d)$.
Assume $M=\Gamma\backslash X$ is a complete $d$-manifold of normalized bounded negative curvature. Suppose further that $M$ has finite volume.
Suppose that we are given an assignment of numbers $\gamma\mapsto\epsilon_\gamma,~\gamma\in\Gamma\setminus\{1\}$, which is conjugation invariant and valued in the interval $[\epsilon,\epsilon(d)]$. Let
$$
 \tilde M_-:=\cup_{\gamma\in\Gamma\setminus\{1\}}\{d_\gamma<\epsilon_\gamma\},~\tilde M_+:=X\setminus\tilde M_-,
$$
and set $M_+=\Gamma\backslash\tilde M_+$ and $M_-=\Gamma\backslash\tilde M_-$.
Then $M_+$ is a compact manifold with boundary and the connected components of $M_-$ are of two types (corresponding to the type of $\Gamma_{x}$):
\begin{itemize}
\item A tube: a tubular neighborhood of a short closed geodesic, homeomorphic to a ball bundle over the circle and homotopy equivalent to a circle.
\item A cusp: homeomorphic to a compact $(d-1)$-manifold times a half line.
\end{itemize}
The number of components of $M,$ is at most $C\cdot\vol(M)$ where $C=C(d)$ is a constant depending on $d$.
If $d>2$ then $M_+$ is connected.
\end{theorem}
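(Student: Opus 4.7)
The plan is to break the proof into local analysis (via the Margulis lemma), global analysis of connected components (to identify tube/cusp structure), and volume counting.

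First, I would use Theorem~\ref{thm: margulis lemma} at every point of $\tilde M_-$. Since $\epsilon_\gamma\leq\epsilon(d)$, whenever $x\in\{d_\gamma<\epsilon_\gamma\}$ we have $\gamma\in\Gamma_{x,\epsilon(d)}$, so the local Margulis group at $x$ controls which sub-level sets can meet at $x$. Each $U_\gamma:=\{d_\gamma<\epsilon_\gamma\}$ is open and convex by convexity of $d_\gamma$, so a connected component $\tilde C$ of $\tilde M_-$ is a union $\bigcup_{\gamma\in \Sigma(\tilde C)} U_\gamma$ with the $U_\gamma$ pairwise overlapping into a chain. Because at every overlap the relevant generators sit in a common Margulis group, a continuity/propagation argument along paths in $\tilde C$ shows that all generators in $\Sigma(\tilde C)$ share a single ``type'': either they are all loxodromic with a common axis $A$, or all parabolic with a common fixed point $\zeta\in X(\infty)$. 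This is the main structural step, and it is also the place requiring the most care: one must check that the type (and axis/fixed point) cannot change along a continuous path in $\tilde C$ via the three alternatives of Theorem~\ref{thm: margulis lemma}.

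Next I would identify the topology of each quotient component of $M_-$. In the loxodromic case the $\Gamma$-stabilizer of $\tilde C$ is the maximal cyclic group with axis $A$; the quotient of the tube $\tilde C$ is a ball bundle over the closed geodesic $A/\langle\gamma_0\rangle$, so a tube as claimed. In the parabolic case, $\tilde C$ is contained in a horoball around $\zeta$ and its stabilizer is $\Gamma_\zeta$; using the finite-volume hypothesis, $\Gamma_\zeta$ acts cocompactly on the horospheres centered at $\zeta$ (as recalled in \S\ref{sec:normal-nilpotent}), so the quotient is homeomorphic to a compact $(d-1)$-manifold times $[0,\infty)$.

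For compactness of $M_+$, I would observe that since $\epsilon_\gamma\geq\epsilon$, the classical $\epsilon$-thin part of $M$ is contained in $M_-$. Hence $M_+\subset M_{\geq\epsilon}$, and the latter is compact by the standard thick--thin decomposition for finite-volume manifolds of bounded negative curvature. The smooth manifold-with-boundary structure on $M_+$ follows from a standard generic/perturbation argument on the level sets of the $d_\gamma$ (choosing the $\epsilon_\gamma$ to be regular values, or smoothing corners). To bound the number of components by $C\cdot\vol(M)$, I would select from each component a boundary point $y$ in $\partial M_+\subset M_{\geq\epsilon}$; using the Margulis lemma, such points from distinct components must lie at distance at least some universal $\delta=\delta(d,\epsilon)>0$, because two boundary strands meeting within that distance would produce a Margulis group at a joining point containing generators from both components, forcing them to share axis or fixed point and thus to coincide. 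The bound then follows from Lemma~\ref{lem:ess-vol} together with Theorem~\ref{thm:comparison}.

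Finally, for connectedness of $M_+$ when $d>2$, I would first show that $\tilde M_+$ is connected: the closures of the components of $\tilde M_-$ are pairwise disjoint convex-like subsets of the topological $\bbR^d$ given by $X$ (each is the union of nested convex tubes around an axis, or lies in a horoball), so in codimension $\geq 2$ with respect to the axis/horosphere, their removal from $X$ does not disconnect $X$ once $d\geq 3$. Equivariance then implies $M_+=\Gamma\backslash\tilde M_+$ is connected. The obstruction in dimension $2$ is genuine, as a short closed geodesic on a surface may separate, which matches the formulation of the statement.
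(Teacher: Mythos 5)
The paper does not spell out a proof of Theorem~\ref{thm:think-thin}: it simply remarks that replacing the uniform cutoff $\epsilon$ by a conjugation-invariant assignment $\gamma\mapsto\epsilon_\gamma$ valued in $[\epsilon,\epsilon(d)]$ does not affect the classical thick--thin argument and refers to \cite{BGS}*{\S 8}. Your proposal is a reasonable from-scratch sketch of exactly that classical argument (Margulis lemma at each point, propagation of type along connected components, tube/cusp identification, cocompactness from finite volume, compactness of $M_{\geq\epsilon}$, and a packing estimate for the number of components), so it is essentially the same route the paper points to, and I find no gap in the skeleton.

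One part is phrased misleadingly: the connectedness of $M_+$ for $d>2$. You appeal to a ``codimension $\geq 2$'' heuristic, but the components of $\tilde M_-$ that you remove are open, $d$-dimensional sets, not subsets of codimension $\geq 2$. The correct reason removal does not disconnect is that the boundary of each component of $M_-$ (equivalently, of each component of $\tilde M_-$) is connected once $d\geq 3$: a tube boundary is an $S^{d-2}$-bundle over $S^1$ (upstairs, $\mathbb{R}\times S^{d-2}$), connected for $d\geq 3$, and a cusp boundary is a compact quotient of a horosphere $\cong\mathbb{R}^{d-1}$, hence always connected. Given that, a chain argument through the components of $M_-$ (if $M_+$ were disconnected, some component of $M_-$ would have boundary meeting two different components of $M_+$, contradicting connectedness of that boundary) establishes connectedness of $M_+$. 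The $d=2$ obstruction is then transparent: the tube boundary is $S^0\times S^1$, two circles, and a short separating geodesic indeed disconnects the thick part.
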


The difference between our formulation and the standard one is that in the standard one $\epsilon=\epsilon(d)$ and hence all the $\epsilon_\gamma$ are equal to $\epsilon$. However, the proof of the statement is the same as the original one. For instance the result follows mutatis mutandis from \cite[\S 8]{BGS}.

\begin{remark}
In practice, it is useful to apply Theorem \ref{thm:think-thin} when the assignment $\gamma\mapsto\epsilon_\gamma$ is defined on a proper (conjugation invariant) subset $\Gamma'\subset\Gamma\setminus\{1\}$. This is legitimate as long as one is able to show that the resulting $M_+$ is contained in the $\epsilon$-thick part $M_{\ge \epsilon}$. Indeed, in that situation one can extend the assignment to $\Gamma\setminus\{1\}$ by declaring $\epsilon_\gamma=\epsilon$ for every $\gamma\notin\Gamma'$, without changing $M_+$.
\end{remark}

\begin{remark}
For a generic assignment $\epsilon_\gamma$, $\#\{\gamma:d_\gamma(x)=\epsilon_\gamma\}\le d$ for every $x\in X$, in which case $M_+$ is a manifold with corners.
\end{remark}

\subsection{Proof of Theorem~\ref{thm:simplicial-decomposition}}
We assume $M$ is a manifold as in Theorem~\ref{thm:simplicial-decomposition},
$X$ is its universal cover and $\Gamma$ its fundamental group, which acts on $X$ by deck transformations.

We aim to apply Theorem~\ref{thm:HV}.
To this end, we have to define constants $0<\epsilon<\epsilon_0$, a conjugation invariant subset $\Gamma'\subset \Gamma$ and a conjugation invariant assignment of constants $\epsilon_\gamma\in[\epsilon,\epsilon_0]$ for $\gamma\in\Gamma'$.
We will then set
$$
 \tilde M_-=\cup_{\gamma\in\Gamma'}\{ d_\gamma<\epsilon_\gamma\}\subset X
$$
and let $M_-$ be its image in $X$ and $M_+=M\setminus M_-$.
We will have to show that $M_+\subset M_{\geq\epsilon}$
and that for every point $x\notin \tilde M_-$ the group
$$
 \langle \gamma\in \Gamma':d_\gamma(x)\le 4\epsilon_\gamma\rangle
$$
is commutative.

We let $\epsilon(d)$ and $m(d)$ be as in Theorem~\ref{thm: margulis lemma}
and set
$$
 \epsilon_0=\epsilon(d)~\text{and}~\epsilon=\frac{\epsilon(d)}{4m(d)17^{d}}.
$$
Having fixed $\epsilon$, we use the shorthand notation $\Gamma_x=\Gamma_{x,\epsilon}$ and for a $\Gamma$-parabolic point $\zeta\in X(\infty)$ we let $\Gamma_\zeta$ and $N_\zeta\lhd\Gamma_\zeta$ be the corresponding subgroup defined in \S \ref{sec:normal-nilpotent}.
We let $\Gamma'$ be the union of all nontrivial loxodromic elements with minimal displacement $\le \epsilon(d)$ and all nontrivial parabolic elements which belong to some $N_\zeta$ with $\zeta\in X(\infty)$ $\Gamma$-parabolic.

Aiming at defining $\epsilon_\gamma$ we start by defining $i(\gamma)$ for each $\gamma\in\Gamma'$.
For a loxodromic $\gamma$ we set $i(\gamma)=0$.
For a parabolic $\gamma$ we let $i(\gamma)$ be its centrality rank at infinity.
Let us make this precise.
Assume $\gamma\in \Gamma'$ is parabolic and let $\zeta$ be its fixed point in $X(\infty)$.
Let $C^i(N_\zeta)$ be the upper central series of $N_\zeta$. That is $C^0(N_\zeta)$ is the center of $N_\zeta$ and $C^i(N_\zeta)/C^{i-1}(N_\zeta)$ is the center of $N_\zeta/C^{i-1}(N_\zeta)$.
We let $i(\gamma)$ be the minimal index such that $\gamma\in C^i(N_\zeta)$.

Finally, for every $\gamma\in\Gamma'$ we define
$$
 \epsilon_\gamma=\frac{\epsilon(d)}{4\cdot17^{i(\gamma)}}.
$$
Note that $\Gamma'$ and the assignment $\Gamma'\ni \gamma\mapsto\epsilon_\gamma$ are
invariant under conjugation in $\Gamma$.
We set
$$
 \tilde M_-=\cup_{\gamma\in\Gamma'}\{d_\gamma<\epsilon_\gamma\},~
 M_-=\Gamma\backslash \tilde M_-~\text{and}~M_+=M\setminus M_-.
$$
We now argue to show that $M_+\subset M_{\geq\epsilon}$.
Fix $x$ in the preimage of $M_+$, that is $x\notin \tilde M_-$, and $\gamma\in\Gamma\setminus \{1\}$.
We need to show that $d_\gamma(x)\geq \epsilon$.
If $\gamma$ is loxodromic this follows immediately from the fact that $x\notin \{d_\gamma<\epsilon_\gamma\}$ since $\epsilon<\epsilon_\gamma$.
Thus we may suppose by negation that $d_\gamma(x)<\epsilon$ where $\gamma$ is parabolic, fixing a point $\zeta$ at infinity.
Since $\gamma\in \Gamma_x\le\Gamma_\zeta$, for some $k\leq m(d)$, $\gamma^k\in N_\zeta$. It follows that
 $$
  d_{\gamma^{k}}(x)\le m(d)d_\gamma(x)<m(d)\epsilon=\frac{\epsilon(d)}{4\cdot 17^{d}}\le \frac{\epsilon(d)}{4\cdot 17^{i(\gamma^k)}}=\epsilon_{\gamma^{k}}.
 $$
Thus $x\in \{d_{\gamma^k}<\epsilon_{\gamma^k}\}\subset \tilde M_-$ and we get the desired contradiction.

Next we fix $x\notin\tilde M_-$ and argue to show that the group
$
 \langle \gamma\in \Gamma':d_\gamma(x)\le 4\epsilon_\gamma\rangle
$
is commutative.
To this end it is enough to show that every $\alpha,\beta$ in the generating set
$\{\gamma\in \Gamma':d_\gamma(x)\le 4\epsilon_\gamma\}$
commute.
We fix $\alpha,\beta$ in this set and assume by negation that
$[\alpha,\beta]=\alpha\beta\alpha^{-1}\beta^{-1}\ne 1$.
Since $4\epsilon_\alpha,4\epsilon_\beta\leq \epsilon(d)$ we have in particular that $\alpha,\beta\in \Gamma_{x,\epsilon(d)}$.
By assumption $\alpha$ and $\beta$ do not commute hence $\Gamma_{x,\epsilon(d)}$ is not abelian.
According to Theorem~\ref{thm: margulis lemma} we therefore have that $\Gamma_{x,\epsilon(d)}$ is parabolic.
Thus $\alpha,\beta\in \Gamma_\zeta$ for some $\zeta\in X(\infty)$. Recalling that $N_\zeta=\Gamma'\cap \Gamma_\zeta$, we deduce furthermore that $\alpha,\beta\in N_\zeta$.
Since $N_\zeta$ is nilpotent,
$$
 i([\alpha,\beta])\le\min\{ i(\alpha),i(\beta)\}-1.
$$
Therefore, assuming that $i(\alpha)\le i(\beta)$, we get
$$
 d_{[\alpha,\beta]}(x)\le 2d_\alpha(x)+2d_\beta(x) \leq
8(\epsilon_\alpha+\epsilon_\beta)\le \frac{16\epsilon(d)}{4\cdot 17^{i(\alpha)}}
< \frac{\epsilon(d)}{4\cdot 17^{i([\alpha,\beta])}}=\epsilon_{[\alpha,\beta]},
$$
contradicting the assumption that $x$ is not contained in $\tilde M_-$.

We therefore meet the conditions of Theorem~\ref{thm:HV}.
We let $D$ and $c$ be the constants given there which depend only on $d$, $\epsilon_0$ and $\epsilon$.
In fact, $D$ and $c$ depend only on $d$, as $\epsilon$ and $\epsilon_0$ were defined by means of the Margulis constants $\epsilon(d)$ and $m(d)$.
We conclude that
$(M_+,\partial M_+)$ is homotopy equivalent to a $(D,c\cdot\vol{M_+})$ simplicial pair
$(\mathcal{R},\mathcal{R}_0)$.

The statement regarding the components of the complement $M_-=M\setminus M_+$ is given in Theorem~\ref{thm:think-thin}.
Thus, we have completed the proof of Theorem~\ref{thm:simplicial-decomposition}.\qed

\section{Homology and homotopy in dimension $\ne 3$} \label{sec:hom}

\subsection{Torsion homology bounds from simplicial approximations}

A useful tool for bounding the torsion homology of simplicial complexes is the following lemma attributed to Gabber (see e.g.~\cites{soule, emery, sauer}).
A proof can be found in a paper by Soul{\'e}~\cite{soule}*{Lemma~1}.

\begin{lemma}\label{lem: soule}
	Let $A$ and $B$ be finitely generated free $\bbZ$-modules. Let
	$a_1,\ldots, a_n$ and $b_1,\ldots, b_m$ be $\bbZ$-bases of
	$A$ and $B$, respectively. We endow
	$B_\bbC=\bbC\otimes_\bbZ B$ with the Hilbert space
	structure for which $b_1,\ldots, b_m$ is a Hilbert basis.
	Let $f:A\to B$ be a homomorphism. Let $I\subset\{1,\ldots, n\}$ be a
	subset such that $\{f(a_i)\mid i\in I\}$ is a basis of $\im(f)_\bbC$.
	Then
	\[
		|\tors\coker(f)|\le \prod_{i\in I} \norm{f(a_i)}.
	\]
\end{lemma}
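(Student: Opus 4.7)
The plan is to reduce the claim to a covolume comparison in the ambient Euclidean space and then invoke Hadamard's inequality. Set $L := \im(f) \subset B$, let $L' \subset L$ be the $\bbZ$-submodule generated by $\{f(a_i) \mid i \in I\}$, and let $V \subset B_\bbR$ be the common $\bbR$-span of $L$ and $L'$. Because $\{f(a_i) \mid i \in I\}$ is a $\bbC$-basis of $\im(f)_\bbC$, it is in particular $\bbZ$-linearly independent, so $L'$ is free of rank $k := |I|$ and $\dim_\bbR V = k$. Let $\Lambda := B \cap V$, the saturation of $L$ (equivalently of $L'$) in $B$.

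Since $B/\Lambda$ is free, the torsion of $\coker(f) = B/L$ is exactly $\Lambda/L$, giving $|\tors\coker(f)| = [\Lambda : L] \le [\Lambda : L']$ via the inclusion chain $L' \subset L \subset \Lambda$. The index $[\Lambda : L']$ equals the ratio of covolumes $\operatorname{covol}_V(L') / \operatorname{covol}_V(\Lambda)$, computed with respect to the Euclidean structure on $V$ induced from $B_\bbR$ (in which $b_1, \ldots, b_m$ is orthonormal). Hadamard's inequality applied to the Gram matrix of $\{f(a_i) \mid i \in I\}$ gives
\[
\operatorname{covol}_V(L')^2 = \det\bigl(\langle f(a_i), f(a_j)\rangle\bigr)_{i,j\in I} \le \prod_{i\in I}\norm{f(a_i)}^2,
\]
so it suffices to prove $\operatorname{covol}_V(\Lambda) \ge 1$.

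The crux is this lower bound. I would choose a $\bbZ$-basis $\lambda_1, \ldots, \lambda_k$ of $\Lambda$ and extend it to a $\bbZ$-basis $\lambda_1, \ldots, \lambda_m$ of $B$ (possible because $\Lambda$, as a saturation, is a primitive sublattice and hence a direct summand). Then $\operatorname{covol}_V(\Lambda)^2 = \det\bigl(\langle \lambda_i, \lambda_j\rangle\bigr)_{1 \le i,j \le k}$ is a principal minor of the full Gram matrix $(\langle \lambda_i, \lambda_j\rangle)_{i,j}$, which has integer entries because $b_1, \ldots, b_m$ is an orthonormal $\bbZ$-basis of $B$. Being the determinant of a positive-definite integer symmetric matrix, this minor is a positive integer and hence at least $1$. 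Chaining the estimates gives $|\tors\coker(f)| \le \prod_{i\in I}\norm{f(a_i)}$.

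The nontrivial point of the argument is this covolume lower bound, which is precisely where the integrality of $B$ and the orthonormality of $b_1, \ldots, b_m$ enter; the other ingredients — reduction from $L$ to $L'$, converting an index to a ratio of covolumes, and Hadamard — are standard linear-algebraic bookkeeping.
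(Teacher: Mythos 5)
Your argument is correct. Note that the paper does not prove this lemma itself: it attributes it to Gabber and cites Soul\'e's Lemma~1 for the proof, so there is no in-paper argument to compare against line by line. Your covolume proof is a self-contained reformulation of the standard Gabber--Soul\'e argument, which runs through the $k$-th exterior power ($k=|I|$): there one observes that $f(a_{i_1})\wedge\cdots\wedge f(a_{i_k})$ is an integer multiple, divisible by $|\tors\coker(f)|$, of a generator of $\Lambda^k(B\cap V)\cong\bbZ$, that this generator is a nonzero integral vector in $\Lambda^k B$ and hence has norm at least $1$, and that Hadamard's inequality bounds the norm of the wedge by $\prod_{i\in I}\norm{f(a_i)}$. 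Your identification $|\tors\coker(f)|=[\Lambda:L]$, the passage from $L$ to $L'$, the index-as-ratio-of-covolumes step, the Gram-determinant form of Hadamard, and the integrality argument giving $\operatorname{covol}_V(\Lambda)\ge 1$ are exactly these ingredients in lattice language, so the two routes are equivalent in substance. One small simplification: you do not need to extend a basis of $\Lambda$ to a basis of $B$ (nor invoke primitivity for that purpose); since $\Lambda\subset B$ and $b_1,\ldots,b_m$ is an orthonormal $\bbZ$-basis, the Gram matrix of any $\bbZ$-basis of $\Lambda$ already has integer entries, and being positive definite its determinant is a positive integer, hence at least $1$.
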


From the previous lemma we deduce an estimate of the torsion in relative homology groups analogous to the absolute case as in (cf.~e.g.~\cites{soule, emery, sauer}):

\begin{lemma}\label{lem: torsion bound general from simplicial pair}
For $p, D\in\bbN$ there is a constant $C(D,p)>0$ with the following property.
Let $Y$ be a space, and let $K\subset Y$ be a (possibly empty) subspace so that $(Y,K)$ is
homotopy equivalent to a $(D,V)$-simplicial pair. Then
\[ \log\bigl(|\tors H_p(Y, K;\bbZ)|\bigr)\le C(D, p)\cdot V. \]
\end{lemma}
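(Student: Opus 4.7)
The strategy is to replace $(Y,K)$ by the homotopy-equivalent simplicial pair and run Gabber's lemma on the relative simplicial chain complex. Since a homotopy equivalence of pairs induces isomorphisms on relative homology, it suffices to prove the bound for $H_p(\mathcal{R},\mathcal{R}_0;\bbZ)$, where $(\mathcal{R},\mathcal{R}_0)$ is a $(D,V)$-simplicial pair. Let $C_\ast=C_\ast(\mathcal{R})/C_\ast(\mathcal{R}_0)$ be the relative simplicial chain complex; this is a chain complex of finitely generated free abelian groups whose preferred basis in degree $q$ consists of the $q$-simplices of $\mathcal{R}$ not contained in $\mathcal{R}_0$. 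A straightforward counting argument using the vertex-degree bound shows that the number of $q$-simplices is at most $C_1(D,q)\cdot V$ for some constant depending only on $D$ and $q$ (each $q$-simplex is determined by any of its vertices together with a choice of $q$ neighbours, and each vertex has at most $D$ neighbours).

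Next, I would record the linear-algebraic observation that $\tors H_p(\mathcal{R},\mathcal{R}_0)=\tors\coker(\partial_{p+1})$, where $\partial_{p+1}\colon C_{p+1}\to C_p$. Indeed, writing $Z_p=\ker\partial_p$ and $B_p=\im\partial_{p+1}$, the short exact sequence
\[
0\to Z_p/B_p\to C_p/B_p\to C_p/Z_p\to 0
\]
has free right-hand term (as $C_p/Z_p\cong\im\partial_p\subset C_{p-1}$), hence all the torsion of $C_p/B_p=\coker\partial_{p+1}$ is concentrated in the subgroup $Z_p/B_p=H_p(\mathcal{R},\mathcal{R}_0)$.

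Now apply Lemma~\ref{lem: soule} to $\partial_{p+1}\colon C_{p+1}\to C_p$ with the simplicial bases on both sides, endowing $(C_p)_\bbC$ with the corresponding Hilbert basis. Each generator $\sigma$ of $C_{p+1}$ is a $(p+1)$-simplex, whose boundary in $C_p$ is a sum of at most $p+2$ basis elements with signs $\pm 1$; in particular $\|\partial(\sigma)\|\le\sqrt{p+2}$. The subset $I$ appearing in Lemma~\ref{lem: soule} has cardinality at most $\rank C_{p+1}\le C_1(D,p+1)\cdot V$, so Lemma~\ref{lem: soule} yields
\[
|\tors\coker(\partial_{p+1})|\le (\sqrt{p+2})^{C_1(D,p+1)\cdot V}.
\]
Taking logarithms and combining with the identification of $\tors H_p(\mathcal{R},\mathcal{R}_0)$ above gives the stated bound with $C(D,p)=\tfrac{1}{2}C_1(D,p+1)\log(p+2)$.

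The argument is essentially mechanical once Lemma~\ref{lem: soule} is available; the only point that requires any care is the identification of $\tors H_p$ with $\tors\coker\partial_{p+1}$, and the bookkeeping for the number of $(p+1)$-simplices in terms of $D$, $p$, and $V$. I do not foresee any genuine obstacle beyond these routine checks.
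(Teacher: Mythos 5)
Your proposal is correct and follows essentially the same route as the paper's proof: bound the number of simplices of a $(D,V)$-simplicial pair, equip the relative simplicial chain complex with its simplicial Hilbert basis, bound the norm of the boundary of each simplex, and apply Lemma~\ref{lem: soule}. The only difference is that you spell out the (routine) identification $\tors H_p(\mathcal{R},\mathcal{R}_0)=\tors\coker(\partial_{p+1})$, which the paper leaves implicit.
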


\begin{proof}
	Note that a $(D,V)$-simplicial complex has at most $D^{p}\cdot V$ $p$-simplices.
	The relative simplicial chain complex has hence a $\bbZ$-basis of size at most $D^{p}\cdot V$ (corresponding to simplices whose boundary is not completely in the subcomplex). The $\bbZ$-basis induces a Hilbert basis on the
	complexification. The norm of the differential of a $p$-simplex is at most $(p+1)$. The statement follows now from the previous lemma.
\end{proof}

\subsection{Conclusion of proofs of Theorems~\ref{thm: main} and~\ref{thm: counting}}\label{sub: conclusion of proofs}

\begin{proof}[Proof of Theorem~\ref{thm: main}]
For $d=2$ the statement follows from the Gauss--Bonnet theorem. Fix $d\geq 4$. 
Let $M$ be a complete $d$-dimensional
manifold of finite volume with normalized bounded negative curvature. According to Theorem~\ref{thm:simplicial-decomposition} there are constants $D=D(d)>0$ and $c=c(d)>0$ and a $d$-dimensional compact submanifold $M_+\subset M$ with boundary $M_0=\partial M_+$ such that
$(M_+, M_0)$ is homotopy equivalent to a $(D, c\vol(M))$-simplicial pair.
The closure $M_-$ of the complement of $M_+$ in $M$ decomposes into compact components which are
ball bundles over circles and non-compact components. Let $M_-^c$ and $M_-^\infty$ be the union of the compact and the non-compact components, respectively. We consider the following subspaces:
	\begin{align*}
		M_-&\phantom{:}=M_c^-\cup M^\infty_-=\overline{M\backslash M_+}\\
		M_0 &\phantom{:}= \partial M_+\\
		M_0^\infty&:=M_-^\infty\cap M_0,\\
		M_0^c&:=M_-^c\cap M_0,\\
		M_c &:= M_+\cup M_-^c.
\end{align*}
\begin{figure}
\centering	
\end{figure}
%
%
Note that $(M_+, M_0^c)$ is also homotopy equivalent to
a $(D, c\vol(M))$-simplicial pair (see Remark~\ref{rem: boundary restriction}).
The space $M_c$ is also a $d$-dimensional manifold with (possibly empty) boundary.
The inclusion $M_0^\infty\subset M_-^\infty$ is a strong deformation retract which induces a strong deformation
retract $M_c\subset M$.

By Lemma~\ref{lem: torsion bound general from simplicial pair} there is a constant $C=C(d)>0$ such that
\begin{align}\label{eq: torsion bound for thick part}
	\log|\tors H_p(M_+;\bbZ)|&\le C\cdot\vol(M)\\
	\log|\tors H_p(M_+,M_0^c;\bbZ)|&\le C\cdot\vol(M)\notag
\end{align}	
in all degrees~$p$.

We claim that the inclusion $M^c_0\subset M_-^c$ is $(d-2)$-connected: $M_-^c$ is a
topological sum of ball bundles over $S^1$, thus the inclusion fits into commutative diagram with horizontal
fiber sequences:
\begin{equation*}
	\begin{tikzcd}
		\bigsqcup S^{d-2}\arrow[d, hook]\arrow[r, hook] & M_0^c\arrow[d, hook]\arrow[r] & \bigsqcup S^1\arrow[d, equal]\\
		\bigsqcup D^{d-1}\arrow[r, hook]    & M^c_-\arrow[r] & \bigsqcup S^1
	\end{tikzcd}
\end{equation*}
The long exact sequence in homotopy groups and the $5$-lemma imply the connectivity claim. Next we apply
the homotopy excision theorem~\cite{tomdieck}*{Proposition~6.10.1 on p.~152} to the pushout square
\begin{equation*}
\begin{tikzcd}
M^c_0\arrow[r, hook]\arrow[d, hook] & M_+\arrow[d, hook]\\
M_-^c\arrow[r, hook]  & M_c
\end{tikzcd}
\end{equation*}
The left vertical map is a cofibration since it is the inclusion of
a union of boundary components into a manifold with boundary.
The left vertical map is $(d-2)$-connected. By homotopy excision the right vertical map
is also $(d-2)$-connected. Since the inclusion $M_c\subset M$ is a deformation retract we obtain that the inclusion $M_+\subset M$ is $(d-2)$-connected. Hence
\begin{equation}\label{eq: from analysis of thick part}
 \log|\tors H_p(M;\bbZ)|=\log|\tors H_p(M_+;\bbZ)|\le C\cdot\vol(M)
 \end{equation}
for $p\in\{0,\ldots, d-3\}$. Next we assume that $p\ge d-2\ge 2$.

Since $M_-^c$ is homotopically a 1-dimensional complex, the long exact homology sequence of $(M_c, M_-^c)$ implies
that $H_p(M_c; \bbZ)\to H_p(M_c, M_-^c;\bbZ)$ is injective. By excision the latter module is isomorphic
to $H_p(M_+, M_0^c;\bbZ)$ for which we have the torsion bound~\eqref{eq: torsion bound for thick part}.
This concludes the proof.
\end{proof}

\begin{remark}\label{rem: main thm for closed manifolds}
	The proof above is easier if $M$ is closed since we do not have to rely on the relative statement in Theorem~\ref{thm:simplicial-decomposition}. In this case we conclude~\eqref{eq: from analysis of thick part} just from an analysis of the thick part. Then we appeal to Poincar\'{e} duality and the universal coefficient theorem to finish the proof of Theorem~\ref{thm: main}.  
\end{remark}

\begin{proof}[Proof of Theorem~\ref{thm: counting}] 
	We retain the setting of the previous proof. Let $M$ be a $d$-dimensional complete Riemannian manifold of normalized bounded negative curvature and volume $\le v$.
	
	We showed in the previous proof that $M_+\hookrightarrow M$ is a $\pi_1$-isomorphism. Since
	$M_+$ is homotopy equivalent to a $(D,CV)$-simplicial complex and the logarithm of the number of possible $2$-skeleta of such simplicial complexes is bounded by
	$\operatorname{const} V\log V$ (cf.~\cite{counting}*{Proposition on p.~1164}), the upper
	bound in the statement of Theorem~\ref{thm: counting} follows. The lower bound comes from considering hyperbolic manifolds alone and
	was already established in~\cites{counting,BGLS,commensurable}.
\end{proof}

\section{Torsion in dimension 3} \label{sec:dim3}

This section is devoted to the study of torsion in the first homology groups of 3-dimensional hyperbolic manifolds.
\S\ref{sub: bounded volume unbounded torsion}
 is devoted to the proof of Theorem~\ref{thm:3D}.
In the proceeding sections we give basic facts concerning the space of Invariant Random Subgroups
of $\PSL_2(\mathbb{C})$ and the Benjamini--Schramm of hyperbolic $3$-manifolds,
which are necessary ingredients of the proofs of Theorem~\ref{thm: Benjamini-Schramm and explosive torsion} and Theorem~\ref{thm:densetorsion}.
In  \S\ref{sec:Farber+torsion} we prove Theorem~\ref{thm: Benjamini-Schramm and explosive torsion}.
In \S\ref{sec:densetorsion} we review a construction due to Brock and Dunfield \cite{BrDu}
and prove Theorem~\ref{thm:densetorsion}.
Lastly, in \S\ref{subsection:analytic} we discuss analytic torsion.

\subsection{Bounded volume but unbounded torsion -- Proof of Theorem~\ref{thm:3D}}\label{sub: bounded volume unbounded torsion}

In this subsection we prove Theorem \ref{thm:3D}.
The manifolds $M_{p,q}$ are all obtained by different Dehn fillings of a knot complement of a fixed knot in $S^3$.

Let us first recall Dehn fillings. Let $M$ be a non-compact complete hyperbolic $3$-manifold. We assume for notational simplicity that $M$ has only one cusp. A typical example of such $M$ is a hyperbolic knot complement.
Let $M^c$ be a compact core of $M$ obtained by chopping off horoball cusp neighborhoods. The boundary $\partial M^c$ is a torus and its intrinsic metric is flat.
Let $f\colon S^1\times S^1\to \partial M^c$ be an isometry with respect to a suitably scaled flat metric on $S^1\times S^1$.
The manifold~$M$ is homeomorphic to $M^c\cup_{f} (T^2\times [0,\infty))$.
Let $\alpha$ be a closed simple geodesic in $\partial M^c$. Let $f_\alpha\colon S^1\times S^1\to \partial M^c$ be a diffeomorphism that maps $S^1\times\{*\}$ to $\alpha$. Then $M_\alpha=M_c\cup_{f_\alpha} D^2\times S^1$ is a closed manifold; we say $M_\alpha$ is \emph{obtained from $M$ by a Dehn filling along ~$\alpha$}. By the $2\pi$-theorem of Gromov--Thurston,
    $M_\alpha$ admits a complete hyperbolic structure if the length of
    $\alpha$ is greater than $2\pi$
(see~\cite{dehnfilling} for a detailed proof). A similar discussion applies if $M$ has more than one cusp.

The next lemma describes the effect of Dehn fillings on the first homology.

\begin{lemma}\label{lem: homology dehn filling}
Let $M$ be a complete hyperbolic $3$-manifold of finite volume that has exactly one cusp. Let $(\mu, \lambda)$ be a basis of $H_1(\partial M^c;\bbZ)$ such that $\lambda$ is in the kernel of $H_1(\partial M^c;\bbZ)\to H_1(M^c;\bbZ)$. Let $\alpha_{p,q}$ be a simple closed geodesic representing $p\mu+q\lambda\in H_1(\partial M^c;\bbZ)$.
    The manifold $M_{(p,q)}$ obtained from Dehn filling along $\alpha_{p,q}$ satisfies
\[ |\tors H_1(M_{(p,q)};\bbZ)|\ge p,\]
with equality in the case that $H_1(M,\mathbb{Z})\simeq \mathbb{Z}$ and it is generated by the image of  $H_1(\partial M^c;\bbZ)$.
In particular, an equality holds in case
$M$ is a knot complement.
\end{lemma}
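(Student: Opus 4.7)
My plan is to compute $H_1(M_{(p,q)};\bbZ)$ explicitly from the gluing description, reduce the statement to an elementary torsion computation for abelian groups, and then verify the equality cases.

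First, I would apply Mayer--Vietoris to the decomposition $M_{(p,q)} = M^c \cup_{f_{\alpha_{p,q}}} (D^2\times S^1)$ with $T^2 := \partial M^c$. Choosing a basis $\{\alpha_{p,q}, \beta\}$ of $H_1(T^2;\bbZ)$, the map $H_1(T^2;\bbZ)\to H_1(D^2\times S^1;\bbZ)\cong\bbZ$ sends $\alpha_{p,q}\mapsto 0$ (it bounds the meridian disk) and $\beta\mapsto\pm 1$. Hence $\beta$ kills the solid-torus summand in the Mayer--Vietoris cokernel, leaving
\[
H_1(M_{(p,q)};\bbZ) \;\cong\; H_1(M^c;\bbZ)\big/\langle j_*[\alpha_{p,q}]\rangle,
\]
where $j\colon\partial M^c\hookrightarrow M^c$. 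Writing $[\alpha_{p,q}]=p\mu+q\lambda$ and using the hypothesis $j_*\lambda=0$, the single relation becomes $p\cdot a$ where $a:=j_*\mu$. Set $A:=H_1(M^c;\bbZ)\cong H_1(M;\bbZ)$ (the cusp deformation retracts onto its boundary).

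Second, I would show that $a$ is non-torsion in $A$. By the classical ``half lives, half dies'' consequence of Poincar\'e--Lefschetz duality for the compact oriented $3$-manifold $M^c$ with torus boundary, the image of $H_1(\partial M^c;\bbQ)\to H_1(M^c;\bbQ)$ has $\bbQ$-dimension~$1$. Since $\lambda$ lies in the kernel and $\{\mu,\lambda\}$ is a basis, this image equals $\bbQ\cdot a$; in particular $a$ has infinite order. Decompose $A = \bbZ^r\oplus T$ with $T=\tors A$ finite, and let $n\ge 1$ be the divisibility of the image $\bar a\in A/T\cong\bbZ^r$, so that $\bar a = n\cdot e$ for some primitive vector $e$. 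A Smith-normal-form calculation on the presentation matrix of $A$ augmented by the relation $pa$ yields
\[
A/\langle pa\rangle \;\cong\; \bbZ^{r-1}\oplus Q,\qquad |Q|=pn\cdot|T|,
\]
and therefore $|\tors H_1(M_{(p,q)};\bbZ)|=pn|T|\ge p$, establishing the inequality.

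Finally, for the equality cases: if $A\cong\bbZ$ is generated by $j_*H_1(\partial M^c;\bbZ)$, then since $j_*\lambda=0$ this image equals $\langle a\rangle$, so $a$ generates $A$; thus $r=1$, $n=1$, $T=0$, and the quotient is $\bbZ/p\bbZ$, giving equality. For a knot complement $M=S^3\setminus K$, Alexander duality yields $H_1(M;\bbZ)\cong\bbZ$ generated by a meridian of $K$, which coincides with the $\mu$-generator of $H_1(\partial M^c;\bbZ)$, so the previous case applies. The only nonroutine step is the Smith-normal-form computation of $|Q|$, but this is an elementary exercise once coordinates are chosen so that $\bar a = n\cdot e$ with $e$ the first basis vector of $\bbZ^r$; the determinant of the augmented relation matrix is then visibly $pn\cdot|T|$.
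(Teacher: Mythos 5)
Your proof is correct and follows essentially the same route as the paper: Mayer--Vietoris for the decomposition $M_{(p,q)}=M^c\cup_{f_{\alpha_{p,q}}}(D^2\times S^1)$, the ``half lives half dies'' consequence of Poincar\'e--Lefschetz duality to see that $j_*\mu$ has infinite order, and Alexander duality for the knot-complement case. The only difference is the final bookkeeping, where you compute the cokernel exactly as $H_1(M^c;\bbZ)/\langle p\,j_*\mu\rangle$ with torsion of order $pn|T|$ (a slight refinement), whereas the paper extracts the lower bound $p$ from the determinant of a $2\times 2$ matrix over the subgroup $H_1(D^2\times S^1)\oplus\langle j_*\mu\rangle$.
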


\begin{proof} The image $\mu'\in H_1(M^c;\bbZ)$ of $\mu$ generates
an infinite cyclic subgroup since the rank of the image of $H_1(\partial M^c;\bbZ)$ in $H_1(M^c;\bbZ)$ is~$1$ by Poincare duality (``half lives half dies'', see~\cite{hatcher-notes}*{Lemma~3.5}). In the case of a knot complement $\mu'$ is the generator of $H_1(M^c;\bbZ)$ by Alexander duality.
The Mayer--Vietoris sequence implies that $H_1(M_{\alpha_{p,q}};\bbZ)$ is the cokernel of the homomorphism
    \[\operatorname{incl}_\ast\oplus (f_{\alpha_{p,q}})_\ast \colon H_1(S^1\times S^1) \to H_1(D^2 \times S^1)\oplus H_1(M^c).\]
    This cokernel maps injectively into the cokernel of the map $\operatorname{incl}_\ast\oplus (f_{\alpha_{p,q}})_\ast$ restricted to its image
    $H_1(D^2\times S^1)\oplus \langle \mu'\rangle$. The latter map is represented by a matrix $A=\begin{pmatrix}
    	0 & p\\ 1 & \ast\end{pmatrix}$. The size of the cokernel of $A$ is $|\det(A)|=p$.
\end{proof}

\begin{proof}[Proof of Theorem~\ref{thm:3D}]
If $M$ is a knot complement $S^3-K$ then
$$
  H_1(M;\bbZ)\simeq H^1(S^1)\simeq\bbZ
$$
by Alexander duality. The compact core $M^c$ is homeomorphic to $S^3$ with a tubular neighborhood of $K$ removed.
The first homology $H_1(\partial M^c;\bbC)$ has a basis $(\mu, \lambda)$ where $\mu$, the \emph{meridian}, represents the boundary
of a disk in the solid torus around~$K$, and $\mu$ is the \emph{longitude} being nullhomologous in $M$.
Let us consider the specific case of the figure eight knot $K_8$.
The knot complement $M_8=S^3-K_8$ is an arithmetic manifold, associated with an index 12 subgroup of $\PSL_2(\bbZ[\omega])$, where $\omega$ is a primitive cube root of unity, and the volume of $M_8$ is twice the volume of a regular ideal simplex which is
\[
 \vol(M_8)=6\int_0^{\pi/3}-\log(2\sin \theta)d\theta <2.03.
\]
According to Thurston~\cite{thurston}*{Theorem~4.7} every closed manifold $M_{(p,q)}$ with coprime $p,q$ obtained from Dehn filling $M_8$ along $p\mu+q\lambda$ is hyperbolic except possibly for the values \[(\pm p,\pm q)\in \{ (1,0), (0,1), (1,1), (2,1), (3,1), (4,1)\}.\]
If so, the volume of $M_{(p,q)}$ is less than the volume of $M_8$ by
Thurston's hyperbolic Dehn filling theorem (see~\cite{boileau+porti}*{Appendix~B} for a detailed proof).
By Lemma~\ref{lem: homology dehn filling} applied to $M_8$, one gets that $H_1(M_{p,q};\bbZ)=\bbZ/p$.
Taking $p\in \bbZ$ arbitrary and $q$ coprime to $p$ with the exception of the finite list above we obtain the manifolds $M_{p,q}$ in the statement of Theorem~\ref{thm:3D}.
\end{proof}


\subsection{Invariant Random Subgroups and the Benjamini--Schramm space}
Let us recall the Benjamini--Schramm space $\text{BS}(\mathbb{H}^3)$ associated to $\mathbb{H}^3$. We refer the reader to~\cites{7s,IRS} for details.
A point in BS$(\mathbb{H}^3)$ is a random complete hyperbolic $3$-manifold with a special point and a choice of frame at its tangent space. A finite volume complete hyperbolic $3$-manifold $M$ corresponds to a point in BS$(\mathbb{H}^3)$ by normalizing the Riemannian volume of $M$ and picking a point in the frame bundle over $M$ at random. One may define the topology on BS$(\mathbb{H}^3)$ directly by defining an appropriate Gromov--Hausdorff topology on the space of framed manifolds and consider the space of probability measures on that. A quicker way however is to associate it with the space of invariant random subgroups of $\PSL_2(\mathbb{C})=\text{Isom}(\mathbb{H}^3)^\circ$.

Consider the space $\text{Sub}(\PSL_2(\mathbb{C}))$ of closed subgroups of $\PSL_2(\mathbb{C})$ equipped with the Chabauty topology. An \emph{IRS} on $\PSL_2(\mathbb{C})$ is a conjugation invariant Borel regular probability measure on $\text{Sub}(\PSL_2(\mathbb{C}))$. An IRS is said to be \emph{discrete} if it is supported almost surely on the set of discrete subgroups. We let $\text{IRS}_d(\PSL_2(\mathbb{C}))$ denote the space of discrete IRS on $G$ equipped with the weak-$*$ topology. Then $\text{IRS}_d(G)$ is compact (see \cite{WUD} or \cite[Sec. 3.2]{IRS}). By fixing an origin and a tangent frame in $\mathbb{H}^3$, we obtain a map from the set of discrete subgroups $\text{Sub}_d(\PSL_2(\mathbb{C}))$ to the space of framed hyperbolic $3$-manifolds, $\Gamma\mapsto \Gamma\backslash\mathbb{H}^3$. Thus every IRS on $\PSL_2(\mathbb{C})$ defines a point in BS$(\mathbb{H}^3)$ via pushing forward the measure. This map is one to one and its image can be characterized as the set of points in BS$(\mathbb{H}^3)$ which are invariant under the geodesic flow, denoted $\text{BS}(\mathbb{H}^3)^\text{inv}$.
The inverse of this map is defined via taking the deck-transformations associated to the fundamental group of a random manifold. These maps identify the compact topological space $\text{IRS}_d(\PSL_2(\mathbb{C}))$ with $\text{BS}(\mathbb{H}^3)^\text{inv}$, and the weak-$*$ topology on the first is the \emph{Benjamini--Schramm topology} (or short: \emph{BS-topology}) on the latter.

\subsection{Benjamini-Schramm convergent sequences of manifolds}
The trivial IRS --- the Dirac mass $\delta_{\langle 1\rangle}$ at the identity of $\PSL_2(\mathbb{C})$, corresponds to the random manifold which is almost surely $\mathbb{H}^3$.\footnote{Since the frame bundle over $\mathbb{H}^3$ is homogeneous, we may allow ourselves to omit the special point and the frame from the random manifold associated to $\delta_{\langle 1\rangle}$ and simply denote it by $\mathbb{H}^3$.} A sequence of complete finite volume hyperbolic manifolds $M_n$ BS-converges to $\bbH^3$ if and only if
   the corresponding sequence of invariant random subgroups $\mu_n$ converges to $\delta_{\langle 1\rangle}$.
    Recall the following characterization from~\cite{7s}:

\begin{lemma}
A sequence $M_n$ of complete hyperbolic $3$-manifolds of finite volume
BS-converges to $\bbH^3$ if and only if for every $r>0$
\[
 \frac{\vol\{x\in M_n\mid \operatorname{InjRad}_{M_n}(x)>r\}}{\vol{(M_n)}}\longrightarrow 1.
\]
\end{lemma}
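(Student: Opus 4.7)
My plan is first to rewrite the volume ratio in the language of invariant random subgroups. Fix an origin $o\in\bbH^3$ and a frame there, so that points in the frame bundle of $M_n=\Gamma_n\backslash\bbH^3$ are identified, via the deck action, with conjugates $g^{-1}\Gamma_n g$ of $\Gamma_n$. Writing $\operatorname{sys}_o(\Gamma):=\inf\{d(\gamma o,o):\gamma\in\Gamma\setminus\{1\}\}$ (with the convention $\operatorname{sys}_o(\langle 1\rangle)=+\infty$), the relation $2\cdot\text{InjRad}_{M_n}(x)=\operatorname{sys}_o(g^{-1}\Gamma_n g)$ turns the volume ratio into
\[
\frac{\vol\{x\in M_n:\text{InjRad}_{M_n}(x)>r\}}{\vol(M_n)}=\mu_n\bigl(\{\Gamma:\operatorname{sys}_o(\Gamma)>2r\}\bigr).
\]
Setting $C_r=\{g\in\PSL_2(\bbC):d(go,o)\le 2r\}$ (compact) and $V_\delta=\{g:d(go,o)<\delta\}$, the statement reduces to showing that $\mu_n\to\delta_{\langle 1\rangle}$ in the weak-$*$ topology if and only if $\mu_n(\{\Gamma:\Gamma\cap C_r=\{1\}\})\to 1$ for every $r>0$.

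For the ``if'' direction I would test against an arbitrary continuous bounded $f\colon \text{Sub}(\PSL_2(\bbC))\to\bbR$. By continuity at $\langle 1\rangle$, for any $\epsilon>0$ there is a basic Chabauty neighborhood $U=\{\Gamma:\Gamma\cap K=\emptyset\}$ of $\langle 1\rangle$ (with $K\subset \PSL_2(\bbC)$ compact, $1\notin K$) on which $|f-f(\langle 1\rangle)|<\epsilon$. Choosing $r$ large enough that $K\subset C_r$ forces $\{\Gamma:\Gamma\cap C_r=\{1\}\}\subset U$, and the hypothesis gives $\mu_n(U)\to 1$; the usual integral split then yields $\int f\,d\mu_n\to f(\langle 1\rangle)$.

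For the ``only if'' direction, fix $r>0$. For any $0<\delta\le r$ set $K_\delta:=C_r\setminus V_\delta$, which is compact and does not contain $1$. The key geometric observation is the containment
\[
\bigl\{\Gamma\text{ discrete torsion-free}:\operatorname{sys}_o(\Gamma)\le 2r\bigr\}\subset\bigl\{\Gamma:\Gamma\cap K_\delta\neq\emptyset\bigr\}.
\]
Indeed, given a nontrivial $\gamma\in\Gamma\cap C_r$, either $\gamma\in K_\delta$ directly, or $\gamma\in V_\delta$; in the latter case let $k_0\ge 1$ be minimal with $d(\gamma^{k_0}o,o)>\delta$. The triangle inequality together with $d(\gamma^{k_0}o,\gamma^{k_0-1}o)=d(\gamma o,o)<\delta$ forces $d(\gamma^{k_0}o,o)<2\delta\le 2r$, so $\gamma^{k_0}\in K_\delta$, and torsion-freeness of $\Gamma$ excludes $\gamma^{k_0}=1$. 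Since $\mu_n$ is supported on discrete torsion-free subgroups (the $M_n$ being hyperbolic manifolds), this yields $\mu_n(\operatorname{sys}_o\le 2r)\le\mu_n(\{\Gamma:\Gamma\cap K_\delta\neq\emptyset\})$. The set on the right is Chabauty-closed (complement of the basic open set attached to the compact $K_\delta$) and avoids $\langle 1\rangle$, so Portmanteau for closed sets gives $\limsup_n\mu_n(\{\Gamma:\Gamma\cap K_\delta\neq\emptyset\})\le\delta_{\langle 1\rangle}(\{\Gamma:\Gamma\cap K_\delta\neq\emptyset\})=0$, whence $\mu_n(\operatorname{sys}_o>2r)\to 1$.

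The main obstacle is precisely this ``only if'' direction. The event $\{\operatorname{sys}_o>2r\}$ is not Chabauty-open at $\langle 1\rangle$, because discrete subgroups close to $\langle 1\rangle$ in Chabauty can harbor arbitrarily small nontrivial elements, so weak-$*$ convergence alone does not directly bound the event of interest. The power-iteration trick above, which crucially uses torsion-freeness of the fundamental groups of the hyperbolic $3$-manifolds $M_n$, converts the awkward ``small nontrivial element'' condition into the tractable ``element in the fixed compact annulus $K_\delta$,'' after which Chabauty closedness and Portmanteau finish the argument.
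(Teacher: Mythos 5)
Your argument is correct. Note that the paper itself offers no proof of this lemma --- it is quoted as a known characterization from the reference \cite{7s} --- so there is no in-paper argument to compare against; your write-up is a legitimate self-contained proof along the standard lines: identify the normalized thick-part volume with $\mu_n\bigl(\{\Gamma:\operatorname{sys}_o(\Gamma)>2r\}\bigr)$ via the frame-bundle identification, get the ``if'' direction from the fact that the sets $\{\Gamma:\Gamma\cap K=\emptyset\}$, $K$ compact with $1\notin K$, form a Chabauty neighborhood basis of $\langle 1\rangle$, and handle the ``only if'' direction by replacing the (non-open) event $\{\operatorname{sys}_o>2r\}$ with the Chabauty-closed event $\{\Gamma\cap K_\delta\neq\emptyset\}$ and applying Portmanteau. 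Two small points worth tightening: first, the existence of the minimal exponent $k_0$ with $d(\gamma^{k_0}o,o)>\delta$ is where discreteness and torsion-freeness are really used --- the powers $\gamma^k$ are pairwise distinct and only finitely many elements of a discrete subgroup can displace $o$ by at most $\delta$, so the cyclic orbit must leave the $\delta$-ball; by contrast, the exclusion of $\gamma^{k_0}=1$, for which you invoke torsion-freeness, is automatic since $d(\gamma^{k_0}o,o)>\delta>0$. Second, it is worth saying explicitly that all the claims about $\operatorname{sys}_o$ (the infimum being attained, the containment in $\{\Gamma\cap K_\delta\neq\emptyset\}$) are only needed $\mu_n$-almost surely, which is fine because $\mu_n$-almost every subgroup is a conjugate $g^{-1}\Gamma_n g$, hence discrete and torsion-free. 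With these remarks the proof is complete.
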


\subsection{Metric on BS$(\mathbb{H}^3)$}

It is well known that $\text{Sub}(\PSL_2(\mathbb{C}))$ is metrizable. The following elegant way to define a metric was suggested by Ian Biringer~\cite{biringer}. Let $B_r$ denote the $r$ ball around the identity in $\PSL_2(\mathbb{C})$ with respect to the matrix norm. We let Hd denote the Hausdorff distance between bounded sets in $\PSL_2(\mathbb{C})$. The distance $\rho$ on $\text{Sub}(\PSL_2(\mathbb{C}))$ is defined as follow:
$$
 \rho(H_1,H_2):=\int_{r>0}\text{Hd}(H_1\cap B_r,H_2\cap B_r)e^{-r}dr.
$$
As a consequence, also the space $\text{BS}(\mathbb{H}^3)^\text{inv}\simeq\text{IRS}_d(\PSL_2(\mathbb{C}))$ is metrizable.
A concrete metric on it is given by the Kantorovich--Wasserstein metric (associated with the bounded metric $\min\{\rho,1\}$),
given by the formula
$$\text{IRS}_d(\PSL_2(\mathbb{C})) \ni \mu,\nu \mapsto \inf_{\eta\in J(\mu,\nu)} \int \min\{\rho(H_1,H_2),1\} d\eta(H_1,H_2), $$
where $J(\mu,\nu)$ is the space of joinings of $\mu$ and $\nu$, i.e. the space of probability measures on $\text{Sub}(\PSL_2(\mathbb{C}))^2$ with marginal measures $\mu$ and $\nu$.

\subsection{A simple construction of a sequence with explosive torsion}\label{sec:Farber+torsion}

\begin{theorem}\label{thm: Benjamini-Schramm and explosive torsion}
There exists a sequence of closed hyperbolic 3-manifolds $M_n$
that converges in the Benjamini--Schramm topology to $\bbH^3$ such
that
\[ \lim_{n\to\infty} \frac{\log|\tors~ H_1(M_n,\bbZ)|}{\vol(M_n)}=\infty. \]
Furthermore, for any function $f\colon (0,\infty)\to (0,\infty)$
there is such a 	sequence $M_n$ with
$$
 \log|\tors~ H_1(M_n,\bbZ)|>f({\vol(M_n)}).
$$
\end{theorem}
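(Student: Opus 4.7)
The construction is a hybrid: take finite covers of a cusped hyperbolic $3$-manifold (to produce a Benjamini--Schramm convergent sequence to $\mathbb{H}^3$) and then Dehn fill all cusps by slopes with enormous meridional components (to inflate the torsion in $H_1$ without significantly disturbing either the volume or the Benjamini--Schramm convergence). Fix $M = M_8$, the figure-eight knot complement. The group $\pi_1(M)$ is linear, hence residually finite, so it admits a chain of finite-index normal subgroups intersecting in the identity. The corresponding tower of finite covers $M^{cov}_n$ satisfies $\vol(M^{cov}_n) \to \infty$, and the associated invariant random subgroups converge to $\delta_{\langle 1\rangle}$, i.e.~$M^{cov}_n \to \mathbb{H}^3$ in the BS-topology.

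For each $n$, enumerate the cusps $c_1,\dots,c_{k_n}$ of $M^{cov}_n$ and pick bases $(\mu_j,\lambda_j)$ of $H_1(\partial M^{cov,c}_n \text{ at } c_j)$ as in Lemma~\ref{lem: homology dehn filling} (with $\lambda_j$ in the kernel of the inclusion map). Fix a large integer $p_n$ and perform the Dehn filling on each cusp along the slope $p_n\mu_j+\lambda_j$; call the resulting closed manifold $\bar{M}_n$. For $p_n$ large enough the slope length exceeds $2\pi$ on every cusp, so by the Gromov--Thurston $2\pi$-theorem $\bar{M}_n$ is hyperbolic, and by Thurston's hyperbolic Dehn surgery theorem one has $\vol(\bar{M}_n)\le \vol(M^{cov}_n)$, with $\bar{M}_n$ converging geometrically to $M^{cov}_n$ as $p_n\to\infty$. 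Passing to a diagonal subsequence if necessary, so that $p_n$ grows fast compared with $k_n$, I show that $\bar{M}_n\to\mathbb{H}^3$ in the BS-topology: indeed, outside a neighborhood of the glued-in solid tori (whose total volume is $O(k_n)$ and whose injectivity-radius defect is concentrated on cores), $\bar{M}_n$ agrees with $M^{cov}_n$ minus cusp neighborhoods, so for any $r>0$ the proportion $\vol((\bar{M}_n)_{\ge r})/\vol(\bar{M}_n)$ still tends to $1$.

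The torsion calculation is a Mayer--Vietoris computation applied cusp by cusp: $H_1(\bar{M}_n)$ is the cokernel of the map $\bigoplus_j H_1(T^2_j) \to H_1(M^{cov,c}_n)\oplus \bigoplus_j H_1(D^2\times S^1)$ in which the Dehn filling sends the slope $p_n\mu_j+\lambda_j$ to zero and a complementary class to the core of the solid torus. Just as in the proof of Lemma~\ref{lem: homology dehn filling}, the block corresponding to the cusp $c_j$ contributes a $2\times 2$ minor with determinant $\pm p_n$ to the presentation matrix of $H_1(\bar{M}_n)$. Since the longitudes $\lambda_j$ lie in the kernel of $H_1(\partial M^{cov,c}_n)\to H_1(M^{cov,c}_n)$ and the meridians $\mu_j$ generate a subgroup of rank $k_n$ in $H_1(M^{cov,c}_n)$ by half-lives-half-dies duality, these contributions are independent, and one obtains $|\tors H_1(\bar{M}_n)|\ge p_n^{k_n}/C_n$ for a constant $C_n$ depending only on the torsion and ranks coming from $M^{cov}_n$. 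In particular $\log|\tors H_1(\bar{M}_n)|\ge k_n\log p_n - \log C_n$. Given any prescribed $f$, choose $p_n$ so large that this lower bound exceeds $f(\vol(\bar{M}_n))$; since $p_n$ does not affect the upper bound on the volume and only improves the BS-approximation, this choice is compatible with the first two steps.

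\textbf{Main obstacle.} The delicate part is not the torsion growth itself---the Mayer--Vietoris computation is routine---but coordinating the two limits: BS-convergence requires that the filled regions contribute a vanishing proportion of the volume, so the number of cusps $k_n$ of $M^{cov}_n$ must be controlled relative to $\vol(M^{cov}_n)$ and relative to the injectivity radii one wants to dominate. This forces a careful diagonalization over $n$, $p_n$, and the subsequence of covers, and requires a quantitative version of geometric convergence of Dehn fillings saying that outside an $o(1)$-volume region the filled manifold $\bar{M}_n$ is bilipschitz close to $M^{cov}_n$ with constants depending only on $p_n$. Once that is in hand, everything else follows by a straightforward diagonal argument.
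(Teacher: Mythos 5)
Your construction follows the same broad outline as the paper's---a Benjamini--Schramm-convergent sequence of cusped manifolds, then Dehn filling to produce closed manifolds with large torsion while preserving the convergence---but the details of the Dehn-filling step diverge in a way that opens a genuine gap. The paper (Proposition~\ref{prop:closure properties}) explicitly \emph{reduces to the one-cusped case}: it first shows that the closure of one-cusped manifolds $\calU_1$ equals the closure of all cusped manifolds $\calU$ (by filling all but one cusp), and only then invokes Lemma~\ref{lem: homology dehn filling}, which is a statement about a manifold with a single cusp. You instead keep all $k_n$ cusps of the cover $M^{cov}_n$ and fill them simultaneously, asserting that each cusp contributes a block with determinant $\pm p_n$ to the presentation matrix of $H_1(\bar M_n)$, independently.

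The step that fails is the choice of bases. You write that one may ``pick bases $(\mu_j,\lambda_j)$ of $H_1$ at each cusp $c_j$ as in Lemma~\ref{lem: homology dehn filling}, with $\lambda_j$ in the kernel of the inclusion map.'' For a manifold with a single cusp this is exactly the content of the half-lives-half-dies lemma: the kernel of $H_1(\partial M^c)\to H_1(M^c)$ has rank~$1$, so it is generated by a primitive class $\lambda$ on the single boundary torus. For a multi-cusped manifold, half-lives-half-dies only gives that the kernel is a rank-$k_n$ Lagrangian subspace of $H_1(\partial M^c)\cong\bbZ^{2k_n}$; it does \emph{not} say this Lagrangian splits as a direct sum of rank-one subgroups, one in each $H_1(T^2_j)$. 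In general it does not---already for two-component link complements in $S^3$ with nonzero linking number, no longitude of a single component is null-homologous. Without a cusp-wise $\lambda_j\in\Ker$, neither the block-triangular form of the presentation matrix nor the claimed independence of the $\mu_j$ (which you derive from the $\lambda_j$ spanning the kernel) is justified, and the bound $|\tors H_1(\bar M_n)|\ge p_n^{k_n}/C_n$ does not follow. Specializing to covers of the figure-eight knot complement does not obviously repair this, since finite covers of a knot complement need not be link complements in $S^3$ and may have complicated peripheral homology. The paper's two-step reduction---fill $k_n-1$ cusps arbitrarily to land back in $\calU_1$, then apply the one-cusp lemma to the last cusp---is precisely what avoids having to control the global peripheral homology, and is the ingredient your proposal is missing. (The Benjamini--Schramm side of your argument, by contrast, is essentially sound once the diagonalization over $n$ and the filling slopes is made precise, and is close in spirit to the paper's use of Lemma~\ref{lem:deff->BC}.)
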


Denote by $\mathcal{F}$ the set consisting of complete finite volume hyperbolic $3$-manifolds viewed as a subset of $\text{BS}(\mathbb{H}^3)^\text{inv}$.
Denote by $\mathcal{K}$ the subset of $\mathcal{F}$ consisting of compact manifolds
and set $\mathcal{U}=\mathcal{F}-\mathcal{K}$.
Denote by $\mathcal{U}_1$ the subset of $\mathcal{U}$ consisting of manifolds with exactly one cusp
and for $C>0$ denote
\[ \mathcal{K}(C)= \{M\in \mathcal{K}: \log|\tors~H_1(M;\bbZ)| \geq C\vol(M)\}. \]

The proof of Theorem~\ref{thm: Benjamini-Schramm and explosive torsion} will follow from the following proposition.

\begin{prop} \label{prop:closure properties}
Let $\clos(\_)$ denote the closure of a set in the BS-topology. The BS-topology satisfies the following properties:
\begin{enumerate}
\item $\mathbb{H}^3\in \clos(\mathcal{U})$.
\item $\clos(\calU_1)=\clos(\calU)$.
\item For every $C>0$, $\mathcal{U}\subset \clos(\mathcal{K}(C))$.
\end{enumerate}
\end{prop}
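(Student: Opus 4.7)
All three assertions are of the same flavor—BS-approximate a given point of $\text{BS}(\bbH^3)$ by manifolds with a prescribed structural feature—so I would attack them in the order listed, reusing (2) in the proof of (3).

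For (1), I would use a congruence tower of a fixed non-compact arithmetic hyperbolic $3$-manifold, for instance the figure-eight knot complement $M_8 = \Gamma_8\backslash\bbH^3$, where $\Gamma_8$ is commensurable with $\PSL_2(\bbZ[\omega])$. Let $M_\mathfrak{n} = \Gamma(\mathfrak{n})\backslash\bbH^3$ be the principal congruence cover of $M_8$ of level a prime ideal $\mathfrak{n}\subset\bbZ[\omega]$ of large norm. Being finite covers of a non-compact arithmetic manifold, the $M_\mathfrak{n}$ lie in $\calU$. As $N(\mathfrak{n})\to\infty$ the minimal systole of $M_\mathfrak{n}$ tends to infinity, and the flat cross-sections of the cusps have areas that grow with $N(\mathfrak{n})$. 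A direct calculation in cusp coordinates $(T^2\times[0,\infty), dh^2 + e^{-2h}g_{T^2})$ shows that the $r$-thin part of each cusp of $M_\mathfrak{n}$ has volume bounded uniformly in $\mathfrak{n}$, so its contribution to $\vol(M_\mathfrak{n})$ is $O(1/N(\mathfrak{n}))$. Combined with the systole bound on the non-cuspidal part, this yields $M_\mathfrak{n}\to\bbH^3$ in the BS-topology by the characterization recalled in the preceding lemma.

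For (2), the inclusion $\clos(\calU_1)\subset\clos(\calU)$ is trivial; I would establish $\calU\subset\clos(\calU_1)$. Given $M\in\calU$ with cusps $c_1,\ldots,c_k$, preserve $c_1$ and perform Dehn fillings on $c_2,\ldots,c_k$ along slopes whose lengths on the respective cusp cross-sections tend to infinity. By the Gromov--Thurston $2\pi$-theorem the resulting manifolds $M^{(n)}$ are hyperbolic for $n$ large, and by Thurston's hyperbolic Dehn surgery theorem they converge to $M$ geometrically. Since the filled solid tori have relative volume going to zero and the volumes $\vol(M^{(n)})$ converge to $\vol(M)$, pointed geometric convergence upgrades to BS-convergence. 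Each $M^{(n)}$ has exactly the cusp $c_1$, so $M\in\clos(\calU_1)$.

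For (3), by (2) it suffices to show $\calU_1\subset\clos(\calK(C))$ for every $C>0$. Given $M\in\calU_1$, pick a basis $(\mu,\lambda)$ of $H_1(\partial M^c;\bbZ)$ as in Lemma~\ref{lem: homology dehn filling} and a sequence of pairs $(p_n,q_n)$ with $\gcd(p_n,q_n)=1$, $p_n\to\infty$, such that the geodesic representative $\alpha_{p_n,q_n}$ of $p_n\mu+q_n\lambda$ is simple and of length $>2\pi$ (automatic for fixed $q_n$ and large $p_n$). The resulting Dehn fillings $M_{(p_n,q_n)}$ are closed hyperbolic by the $2\pi$-theorem and converge to $M$ in the BS-topology, exactly as in (2). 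By Lemma~\ref{lem: homology dehn filling},
\[
|\tors H_1(M_{(p_n,q_n)};\bbZ)|\ \geq\ p_n,
\]
while $\vol(M_{(p_n,q_n)})\leq\vol(M)$. Hence for $n$ large enough, $\log p_n\geq C\cdot\vol(M_{(p_n,q_n)})$, placing $M_{(p_n,q_n)}$ in $\calK(C)$.

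The main obstacle is not algebraic but analytic: one must verify in each step that the pointed geometric convergence produced by Thurston's hyperbolic Dehn surgery theorem (or by the cusp calculation in (1)) actually implies convergence of the associated normalized measures on the frame bundle, i.e.\ BS-convergence. This reduces to checking that the filled (respectively cuspidal) parts contribute a vanishing fraction of the total volume and that the remaining ``thick core'' sits inside an $\epsilon$-biLipschitz copy of a compact piece of the limit—both standard but the only non-formal input in the argument.
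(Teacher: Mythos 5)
Your proof is correct and follows essentially the same route as the paper: a congruence tower of a non-compact arithmetic manifold for (1), Dehn filling all but one cusp for (2), and Dehn fillings together with Lemma~\ref{lem: homology dehn filling} for (3). The only cosmetic differences are the choice of base manifold in (1) and that you sketch the geometric volume estimate for BS-convergence where the paper instead invokes Lemma~\ref{lem:deff->BC}.
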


\begin{proof}[Proof of Theorem~\ref{thm: Benjamini-Schramm and explosive torsion} from Proposition \ref{prop:closure properties}]
Let $d$ a metric that induces the BS-topology on $\text{BS}(\mathbb{H}^3)^\text{inv}$.
For every $n\in\bbN$,
pick $M'_n\in\mathcal{U}$ with $d(M'_n,\mathbb{H}^3)<1/n$ according to~(1), and
pick $M_n\in \mathcal{K}(f(n))$ with $d(M_n,M'_n)<1/n$ according to~(3).
The sequence $(M_n)$ will have the desired properties.
\end{proof}

\begin{lemma}\label{lem:deff->BC}
Let $\Gamma_0$ be a lattice in $\PSL_2(\mathbb{C})$ and let $f_n\colon\Gamma_0\to\PSL_2(\mathbb{C}),~n\in\mathbb{N}$ be homomorphisms such that $\Gamma_n:=f_n(\Gamma_0)$ are lattices, and $f_n$ converges to the inclusion in the topology of $\text{Hom}(\Gamma_0,\PSL_2(\mathbb{C}))$. Let $M_n=\Gamma_n\backslash\mathbb{H}^3$ for $n\in\bbN$. Then $M_n$ converges to $M_0$ in the BS-topology.
\end{lemma}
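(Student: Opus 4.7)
Via the identification $\mathrm{BS}(\bbH^3)^{\mathrm{inv}}\cong\mathrm{IRS}_d(\PSL_2(\mathbb{C}))$ recalled above, the conclusion is equivalent to weak-$*$ convergence of the invariant random subgroups $\mu_n\to\mu_0$ associated to $\Gamma_n$ and $\Gamma_0$. I would establish this via two geometric ingredients: isometric embeddings of large compact pieces of $M_0$ into $M_n$, and convergence of covolumes $\vol(M_n)\to\vol(M_0)$.

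\emph{Isometric embeddings.} For each compact submanifold $K\subset M_0$, I would build an isometric embedding $\phi_n\colon K\hookrightarrow M_n$ for all large~$n$. Lift $K$ to a precompact fundamental set $\tilde K\subset\bbH^3$ which injects into $M_0$ under the $\Gamma_0$-projection, and consider the composition $\tilde K\hookrightarrow\bbH^3\to M_n$. This descends to an isometric embedding of $K$ provided no nontrivial element of $\Gamma_n=f_n(\Gamma_0)$ identifies two points of $\tilde K$. Since the set of $\gamma\in\Gamma_0$ with $\iota(\gamma)\tilde K\cap\tilde K\neq\emptyset$ is finite (by discreteness and properness) and reduces to $\{1\}$ (because $\tilde K$ injects into $M_0$), the pointwise convergence $f_n\to\iota$ on this finite set ensures that the same holds with $f_n$ in place of $\iota$ for $n$ large. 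The subtle point is ruling out \emph{new short elements}, i.e.\ sequences $\gamma_n\in\Gamma_0$ with $\iota(\gamma_n)$ unbounded but $f_n(\gamma_n)$ of small displacement on $\tilde K$. Restricting $K$ to the $\epsilon$-thick part of $M_0$ handles this, since new short geodesics of $M_n$ (e.g.\ Dehn-filling cores) are confined to the thin part of $M_n$ and do not meet the image of the thick part of $M_0$ for $n$ large.

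\emph{Volume convergence.} The embedding $\phi_n$ yields $\vol(M_n)\geq\vol(K)$ for large $n$, so $\liminf_n\vol(M_n)\geq\vol(M_0)$ by exhaustion of $M_0$ by compact submanifolds. For the matching upper bound I would invoke continuity of covolume at lattice holonomy limits: Mostow--Weil local rigidity in the cocompact case (forcing $f_n$ to eventually be a conjugation, hence $\vol(M_n)=\vol(M_0)$) and Thurston's hyperbolic Dehn surgery theorem in the cusped case (giving $\vol(M_n)\nearrow\vol(M_0)$ along sequences whose holonomies converge to $\iota$). This step is the main obstacle in the proof; it is the only place that genuinely uses the standing hypothesis that each $\Gamma_n$ is a lattice and that relies on nontrivial external input, whereas everything else is a consequence of the pointwise convergence and discreteness.

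\emph{Conclusion.} Given $R,\epsilon>0$, choose a compact $K\subset M_0$ with $\vol(M_0\setminus K^{-R})<\epsilon\vol(M_0)$, where $K^{-R}$ denotes the $R$-interior. For large $n$ the isometric image $\phi_n(K^{-R})\subset M_n$ has relative volume $>1-O(\epsilon)$, using the volume convergence; and on $\phi_n(K^{-R})$ the framed $R$-ball about any point is isometric to that of the corresponding point in $M_0$. A standard Kantorovich--Wasserstein coupling (using the $\phi_n$-pushforward of the restriction of the normalized volume to $K^{-R}$) then shows that the Kantorovich--Wasserstein distance between $\mu_n$ and $\mu_0$ is $O(\epsilon)$. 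Since $R$ and $\epsilon$ were arbitrary, $\mu_n\to\mu_0$ weak-$*$, which is precisely the desired BS-convergence.
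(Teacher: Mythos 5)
There is a genuine gap at the step you yourself flag as ``the subtle point''. Pointwise convergence $f_n\to\iota$ controls only finitely many elements of $\Gamma_0$ at a time: it guarantees that for the finite set $F=\{\gamma:\iota(\gamma)\tilde K\cap\tilde K\neq\emptyset\}$ the elements $f_n(\gamma)$ behave like $\iota(\gamma)$, but it cannot by itself exclude that some $\gamma$ \emph{outside} every fixed finite set satisfies $f_n(\gamma)\tilde K\cap\tilde K\neq\emptyset$, or more generally has displacement $\le 2R$ at lifts of points of $K$. Ruling out such ``new'' elements is needed twice in your argument: already for the injectivity of $\phi_n$ on $\tilde K$, and, more seriously, for the claim that the framed $R$-ball at $\phi_n(x)$ in $M_n$ is isometric to the one at $x$ in $M_0$ (which is what the Chabauty--Wasserstein estimate requires: Hausdorff closeness of $g^{-1}\Gamma_n g\cap B_r$ and $h^{-1}\Gamma_0 h\cap B_r$ for all $r$ up to a large cut-off, i.e.\ exclusion of new elements of \emph{moderate} displacement, not only of Margulis-short ones). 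Your justification --- that new short geodesics of $M_n$ lie in the thin part of $M_n$ and miss the image of the thick part of $M_0$ --- is (a) itself the statement to be proved (it presupposes a lower bound on the injectivity radius of $M_n$ along the image, which is exactly the no-new-short-elements claim), (b) argued only by analogy with Dehn-filling cores, whereas the lemma is stated for arbitrary $f_n$ with lattice image, and (c) in any case insufficient for the $R$-ball matching, since it says nothing about new elements of displacement between the Margulis constant and $2R$, i.e.\ about two distant pieces of $\phi_n(K)$ coming $2R$-close in $M_n$. This exclusion of new elements (equivalently, Chabauty/geometric convergence $\Gamma_n\to\Gamma_0$ uniformly along the thick part) is the actual content of the result; as written, your proposal assumes it. A secondary, fixable point: a compact $K\subset M_0$ does not lift homeomorphically to a subset of $\bbH^3$, so $\phi_n$ defined via a fundamental set is only a measurable section; this suffices for the coupling but it is not an ``isometric embedding'' without further care.

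For comparison, the paper does not argue directly: it quotes Proposition~11.2 of Gelander--Levit (local rigidity of uniform lattices), observing that the proof there works verbatim in the non-uniform case once one knows $\vol(M_n)\to\vol(M_0)$, which in $\PSL_2(\mathbb{C})$ is supplied by Thurston's hyperbolic Dehn filling theorem. Your use of Weil rigidity and of Thurston's theorem for the volume convergence matches that input, and is the correct place to use the lattice hypothesis; note that in the cocompact case Weil rigidity already finishes the proof, since $f_n$ eventually a conjugation gives $\mu_n=\mu_0$ exactly (IRSs are conjugation invariant). The honest remaining case is the cusped one, and there the cleanest repair of your argument is to use that Thurston's theory yields not merely volume convergence but geometric (Chabauty) convergence of the holonomy images to $\Gamma_0$; alternatively one can rule out new elements by a limiting argument (a subsequential Chabauty limit of the $\Gamma_n$ would be a discrete group strictly containing $\Gamma_0$, hence of covolume $\le\vol(M_0)/2$, contradicting $\vol(M_n)\to\vol(M_0)$ via the J{\o}rgensen--Thurston description of geometric limits). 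Either way, this step must be made explicit; without it the proposed proof does not go through.
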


\begin{proof}
This is a consequence of \cite[Proposition 11.2]{local-rigidity}. Note that \cite[Proposition 11.2]{local-rigidity} is formulated for uniform lattices, but the proof given in \cite{local-rigidity} applies with no changes to the non-uniform case. Indeed, the proof relies on the fact that $\vol(M_n)\to\vol(M_0)$ which is proved in \cite{local-rigidity} for uniform lattices obtained by arbitrary deformation, and is valid in our situation due to Thurston's hyperbolic Dehn filling theorem.
\end{proof}

\begin{proof}[Proof of Proposition~\ref{prop:closure properties}]
(1) Any residual tower of non-compact hyperbolic $3$-manifolds of finite volume BS-converges to $\mathbb{H}^3$.
For concreteness, we could take $M_n=\mathbb{H}^3/\Gamma_n$ where $\Gamma_n$
is the kernel of $\SL_2(\mathbb{Z}[i])\to \SL_2(\mathbb{Z}/n[i])$. Obviously $M_n\to \mathbb{H}^3$ (see \cite[Section 5]{7s} for concrete estimates on the rate of convergence of congruence towers).

(2) Suppose that $M=\Gamma\backslash \mathbb{H}^3$ is of finite volume and non-compact. The boundary of the compact core $M^c$ of $M$
consists of $n$ tori $T_1, \ldots, T_n$. Fix bases $(\lambda_i, \mu_i)$ of $H_1(T_i;\bbZ)$. For $k>1$ we perform
Dehn fillings in the first $(n-1)$ cusps along curves representing
$\lambda_i+k\mu_i$ to obtain a manifold $M_k$. For $k$ large enough,
$M_k$ admits a complete hyperbolic structure of finite volume
by Thurston's hyperbolic Dehn filling theorem. Hence $M_k\in\calU_1$.
Further, $M_k$ tends to $M$ as $k\to \infty$ in the topology of the representation variety by the same theorem. By Lemma~\ref{lem:deff->BC} $M_k$ BS-converges to $M$.

(3) By part (2) it is enough to show that a complete hyperbolic
$3$-manifold $M$ with one cusp
is in the closure of $\mathcal{K}(C)$. By~\cite{hatcher-notes}*{Lemma~3.5} there is a basis $(\lambda,\mu)$ of $H_1(\partial M^c;\bbZ)$ where $M^c$ is
    	the compact core of $M$ such that the hypothesis of Lemma~\ref{lem: homology dehn filling} is satisfied. Performing
    	Dehn fillings along $\lambda+k\mu$ we obtain closed
    	manifolds $M_k$.
    	By Thurston's hyperbolic Dehn filling theorem $M_k$ is
    	hyperbolic for $k$ large enough and $\vol(M_k)\le \vol(M)$.
    	As in (2) we also conclude that $M_k\to M$ in the BS-topology.
    	By Lemma~\ref{lem: homology dehn filling},
    	\[ |\tors H_1(M_k;\bbZ)|/\vol(M_k)\ge k/\vol(M)\to \infty \text{ as $k\to \infty$.}\qedhere\]
 \end{proof}

\subsection{Asymptotic density of the normalized torsion} \label{sec:densetorsion}

In this subsection we use a result of Brock and Dunfield \cite{BrDu} in order to modify the construction given in Subsection~\ref{sec:Farber+torsion}.
The following theorem is extracted from \cite{BrDu}.

\begin{theorem} [{\cite[\S2]{BrDu}}] \label{thm:BD}
There exists a sequence $M_n$ consisting of finite volume hyperbolic
3-manifolds which BS-converges to $\mathbb{H}^3$ such that for each $n$, $M_n$ has one cusp and $H_1(M_n,\mathbb{Z})\cong \mathbb{Z}$.
Furthermore the first homology of the cusp surjects on  $H_1(M_n,\mathbb{Z})$.
\end{theorem}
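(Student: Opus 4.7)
The plan is to extract the one-cusped hyperbolic 3-manifolds that Brock and Dunfield use as input to Dehn filling in their construction of hyperbolic integer homology 3-spheres and to verify that these cusped precursors already satisfy the four stated properties. Thus $M_n$ will be the cusped hyperbolic 3-manifold whose Dehn filling along a single slope $\gamma_n \subset \partial M_n$ produces Brock--Dunfield's closed integer homology sphere $M_n^{\gamma_n}$.

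For the homological claim, first apply the Mayer--Vietoris argument from the proof of Lemma \ref{lem: homology dehn filling}: the first homology of the filled manifold is the quotient of $H_1(M_n;\bbZ)$ by the subgroup generated by the class of $[\gamma_n]$, so the hypothesis $H_1(M_n^{\gamma_n};\bbZ) = 0$ forces $[\gamma_n]$ to generate $H_1(M_n;\bbZ)$. Since $[\gamma_n]$ lies in the image of $H_1(\partial M_n;\bbZ) \to H_1(M_n;\bbZ)$, this image already surjects. Combined with the half-lives--half-dies principle (which forces the image of $H_1(\partial M_n;\bbZ)$ to have rank exactly one in a one-cusped orientable 3-manifold), this yields $H_1(M_n;\bbZ) \cong \bbZ$ with cusp surjection. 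The one-cusp assertion is built into the construction, since a single Dehn filling is used to produce the closed manifold.

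For BS-convergence, Brock and Dunfield prove that their closed manifolds $M_n^{\gamma_n}$ BS-converge to $\bbH^3$ by exhibiting a uniformly large injectivity radius off a negligible subset. By Thurston's hyperbolic Dehn surgery theorem, as the slope $\gamma_n$ becomes geometrically long, the geometry of $M_n^{\gamma_n}$ off a small tubular neighborhood of the filling core is close in the bi-Lipschitz sense to that of $M_n$ off a small horoball cusp neighborhood. Consequently, for any fixed $r > 0$ the proportion of the $r$-thick part asymptotically coincides in $M_n^{\gamma_n}$ and $M_n$, and BS-convergence transfers from the filled sequence to the cusped precursors.

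The main obstacle, which is precisely the content of \cite{BrDu} rather than of the present extraction, is arranging that $H_1(M_n;\bbZ)$ is torsion-free $\bbZ$ and not $\bbZ$ times a growing finite group. For naive cyclic covers of a fibered knot complement the Wang sequence yields torsion in $\operatorname{coker}(1 - \phi_*^n)$ whose order generally grows with $n$ (compare the figure-eight monodromy $\phi_* = \bigl(\begin{smallmatrix}2 & 1 \\ 1 & 1\end{smallmatrix}\bigr)$, where already $n=2$ produces $\bbZ/5$). Brock and Dunfield bypass this by a careful choice of the underlying fibered manifold and tower of covers; for our purposes we only need to observe that the homological property and BS-convergence descend to the cusped stage before the filling.
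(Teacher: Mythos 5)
Your homological extraction is correct as far as it goes: if a one-cusped finite-volume hyperbolic $M$ admits a filling that is an integer homology sphere, then Mayer--Vietoris gives that $H_1(M;\mathbb{Z})$ is cyclic, generated by a class in the image of $H_1(\partial M^c;\mathbb{Z})$, and half-lives--half-dies upgrades this to $H_1(M;\mathbb{Z})\cong\mathbb{Z}$ with cusp surjection. The gap is in the premise. Brock--Dunfield do not obtain their homology spheres by a single Dehn filling of a one-cusped hyperbolic manifold "used as input"; they fill \emph{every} component of the complement of a many-component link $L_n\subset S^3$ built from two pants decompositions and a separating curve placed at five levels. So the one-cusped precursors you propose to "extract" do not occur in \cite{BrDu}; producing them is precisely the content of the proof in \S\ref{sec:densetorsion}: one performs all the $\pm 1/k$ fillings \emph{except} the one along the component $f^n(\gamma)\times\{3\}$, and one must then prove that the resulting one-cusped manifold $M_{m,k}$ is hyperbolic (Thurston's hyperbolic Dehn filling theorem, for $k$ large depending on the other parameter) before your Mayer--Vietoris argument can even be stated. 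Your closing paragraph, which attributes the torsion-freeness to a "careful choice of the underlying fibered manifold and tower of covers," misdescribes their mechanism; what is actually used is that the fully filled manifold is an integral homology sphere (\cite{BrDu}*{Lemma~2.5}), which is exactly the input your own homological argument needs.

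The BS-convergence step is also not justified in the direction you take it. You want to transfer BS-convergence from the closed manifolds to the cusped ones via bi-Lipschitz closeness off thin parts, but Thurston's Dehn surgery theorem compares fillings of a \emph{fixed} cusped manifold with that manifold, whereas here both the underlying link complement and the filling slopes vary with $n$; making your claim "the proportion of the $r$-thick part asymptotically coincides" precise would require a quantitative drilling/filling theorem with uniform bi-Lipschitz constants, plus an argument that the cusp neighborhood created by leaving one component unfilled has volume $o(\operatorname{vol})$ along the sequence --- neither is addressed. The paper's proof avoids this direction entirely: Brock--Dunfield's Lemma~2.6 controls the thin part of the \emph{unfilled} link complements $N_n(R)$, a first diagonalization (over $R$ and $n$) produces cusped manifolds $N'_m$ BS-converging to $\mathbb{H}^3$, and then for fixed $m$ the partially filled manifolds $M_{m,k}$ BS-converge to $N'_m$ as $k\to\infty$ because they converge in the representation variety and one can invoke Lemma~\ref{lem:deff->BC}; a second diagonalization over $m$ and $k$ finishes. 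If you want to salvage your route, you would have to replace the appeal to Thurston by uniform drilling estimates and a volume bound on the new cusp, at which point you have essentially reconstructed the paper's two-step diagonal argument in a harder way.
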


Unfortunately, neither the theorem, nor the sequence $M_n$ appear explicitly in \cite{BrDu}.
We explain below how to modify the construction that does appear there, in order to prove Theorem~\ref{thm:BD}.
We follow closely the construction given in {\cite[\S2.4]{BrDu}}.
We advise the reader to keep this paper close. Further details and justifications could be found there.


\begin{proof}[Proof of Theorem~\ref{thm:BD}]
We start by fixing once and for all a Heegaard splitting of $S^3$, $S^3=H^+\cup S\cup H^-$, where $H^\pm$ are open and $S=\partial H^+=\partial H^-$ is of genus 2
\footnote{For comparison sake: in the notation of \cite[\S2.4]{BrDu}, we specialize here to the case $N_0=S^3$, $g=2$ and $A=\{0\}$.}.
We will identify below the inclusion of $S$ in a neighborhood of it in $S^3$ with the inclusion
of $S\times \{0\}$ inside $S\times [0,6]$.
We also fix a pants decomposition $P$ of $S$ so that the pared manifolds $(H^\pm,P)$ are acylindrical
and a
separating essential simple closed curve $\gamma$ on $S$ so that the pared manifold
\[ U=\left( (S\times [0,2]-(\gamma\times \{1\}),P\times \{0,2\}\right) \]
is acylindrical.

The reminder of the construction will be dependent on a parameter $R>0$ which we now fix.
In our notation below we will stress the dependence on $R$, which is implicit in \cite[\S2.4]{BrDu}.
We pick a pseudo-Anosov diffeomorphism $f(R):S\to S$ such that the corresponding mapping torus $M_{f(R)}$ has injectivity radius larger than $R+1$
and define a family of links $L_n(R)$ which lie in $S\times [0,6]$ by
\[ L_n(R)= P\times \{1\} \cup f(R)^n(P)\times \{2\} \cup f(R)^n(\gamma)\times \{3\} \cup f(R)^n(P)\times \{4\} \cup P\times \{5\}.\]
By \cite[Lemma~2.6]{BrDu}, for any given $R$, for any large enough $n$ (depending on $R$), the manifold $S^3-L_n(R)$ has a finite volume hyperbolic manifold structure.
Assuming $n$ is indeed large enough, we denote this hyperbolic manifold by $N_n(R)$.
By \cite[Lemma~2.6]{BrDu} we also get that
\[ \lim_{n\to \infty} \frac{\vol(N_n(R)_{<R})}{\vol(N_n(R))}=0. \]
We now perform a diagonalizing argument: for every $m\in \mathbb{N}$ we fix $n_m\in\mathbb{N}$ so that
\[ \frac{\vol(N_{n_m}(m)_{<m})}{\vol(N_{n_m}(m))}<\frac{1}{m} \]
and set $N'_m=N_{n_m}(m)$.
We conclude that the sequence $N'_m$
BS-converges to $\mathbb{H}^3$ as $m$ tends to $\infty$.

For fixed $m$ and $k$ define the manifold $M_{m,k}$ obtained from $S^3$ by performing $1/k$ Dehn filling along the links in $L_{n_m}(m)$ of height 1 and 2 and $-1/k$ Dehn filling along the links in $L_{n_m}(m)$ of height 4 and 5.
If we further make a $1/k$ Dehn filling along the link at height 3 we would get a  manifold (denoted $N_{n_m,k}$
in \cite[\S2.4]{BrDu} for the implicit fixed parameter $R=m$), which is an integral homology sphere by
\cite[Lemma~2.5]{BrDu}.
We do not perform this last Dehn filling!
Thus $H_1(M_{m,k},\mathbb{Z})\simeq \mathbb{Z}$ by Alexander duality,
and the homology of the cusp surjects on it.

Fixing $m$, by Thurston's Theorem we get that for $k$ large enough $M_{m,k}$ has the structure of a finite volume hyperbolic manifold.
Thus, for every $m$ and for a large enough $k$ (depending on $m$), the manifold $M_{m,k}$
is a finite volume hyperbolic
3-manifolds which
has one cusp and $H_1(M_{m,k},\mathbb{Z})\simeq \mathbb{Z}$ is generated by this cusp.
Thurston's Theorem also tells us that for a fixed $m$, when $k$ tends to $\infty$, $M_{m,k}$ tends to $N'_m$ in the representation variety topology.
As explained in Lemma~\ref{lem:deff->BC}, by \cite[Proposition 11.2]{local-rigidity} we get that $M_{m,k}$ also BS-converges to $N'_m$ as $k$ tends to $\infty$.
By the fact that $N'_m$ itself BS-converges to $\mathbb{H}^3$ as $m$ tends to $\infty$,
using once more a diagonal argument, this time on $m$ and $k$, we obtain the required sequence of
finite volume hyperbolic
3-manifolds which BS-converges to $\mathbb{H}^3$, each having one cusp which generates its first homology group.
\end{proof}

\begin{proof}[Proof of Theorem~\ref{thm:densetorsion}]
We fix a sequence $M_n$ as given in Theorem~\ref{thm:BD}.
In particular $v_n=\vol(M_n)$ tends to infinity.
We let $p_n=[\alpha v_n]$ and for every $q\in\mathbb{Z}$ we consider the manifold $N^q_n=(M_n)_{(p_n,q)}$ constructed in
Lemma~\ref{lem: homology dehn filling}
by performing a
 $(p_n,q)$ Dehn filling along the cusp.
By Lemma~\ref{lem: homology dehn filling}, for every $q$,
$|\tors H_1(N^q_n;\bbZ)|= p_n$ and by Lemma~\ref{lem:deff->BC} and Thurston's theorem, $N^q_n$ BS-converges to $M_n$ as $q\to \infty$.
In particular,
\[ \lim_{q\to \infty} \frac{\log|\tors~ H_1(N^q_n,\bbZ)|}{\vol(N^q_n)}=\frac{p_n}{v_n}=\frac{[\alpha v_n]}{v_n}.\]
Since $\lim_n \frac{[\alpha v_n]}{v_n}=\alpha$,
using a diagonalizing argument we may pick a sequence $M^\alpha_n=N^{q_n}_n$ with the required properties.
\end{proof}

\subsection{On analytic torsion} \label{subsection:analytic}

The \emph{Ray-Singer torsion} $\tau(M)$ of a Riemannian manifold
is defined as
\[\tau(M)=\frac{1}{2}\sum_{k=0}^{\dim M}(-1)^kk\log(\det{'}(\Delta_k))\]
where $\det{'}$ is the zeta-regularized product of eigenvalues of the Laplacian $\Delta_k$ on smooth $k$-forms. The $p$-th \emph{regulator} $R_p(M)$ is the covolume of the free part of $H^p(M;\bbZ)$ as a lattice in $H^p(M;\bbR)$ with respect to the
harmonic metric, i.e.~the metric induced from the usual scalar product of harmonic $p$-forms. By the Ray-Singer conjecture, proved by Cheeger and M\"uller, the analytic torsion coincides with the Reidemeister torsion~\cites{cheeger,mueller}. For a $3$-dimensional manifold $M$ this implies the relation
\[\tau(M) = -\log |\tors H_1 (M, \bbZ) |+\log(\vol(M))+2\log(R_1 (M)).\]
In particular, if $M$ is a rational homology sphere, $R_1(M)=0$ and we get
\[\tau(M) = -\log |\tors H_1 (M, \bbZ) |+\log(\vol(M)).\]
The following corollary is thus an immediate application of Theorem~\ref{thm:densetorsion}.

\begin{corollary}
For every $\alpha\in [0,\infty]$ there exists a sequence of closed hyperbolic 3-manifolds $M^\alpha_n$
which are all rational homology spheres, such that the sequence $M^\alpha_n$
converges in the Benjamini--Schramm topology to $\bbH^3$ and
for the Ray-Singer torsion we have
\[\frac{-\tau (M^\alpha_n)}{\vol(M^\alpha_n)}\to \alpha.\]
\end{corollary}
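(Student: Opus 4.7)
The plan is essentially to combine Theorem~\ref{thm:densetorsion} with the Cheeger--M\"uller identification of analytic torsion with Reidemeister torsion, which is already spelled out in the paragraph preceding the corollary. There is no real obstacle; the corollary is a book-keeping exercise on top of Theorem~\ref{thm:densetorsion}.

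First I would invoke Theorem~\ref{thm:densetorsion} directly to obtain, for the given $\alpha\in[0,\infty]$, a sequence $(M_n^\alpha)$ of closed hyperbolic $3$-manifolds which are rational homology spheres, which BS-converge to $\bbH^3$, and which satisfy
\[\frac{\log|\tors H_1(M_n^\alpha,\bbZ)|}{\vol(M_n^\alpha)}\longrightarrow \alpha.\]
These are the candidates; the only remaining task is to translate the torsion-homology statement into the analytic-torsion statement.

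Second, since each $M_n^\alpha$ is a rational homology sphere, $H^1(M_n^\alpha;\bbR)=0$, so the regulator term $R_1(M_n^\alpha)$ is the covolume of the trivial lattice in the zero vector space and contributes nothing. Hence the Cheeger--M\"uller formula quoted just before the corollary specializes to
\[\tau(M_n^\alpha)=-\log|\tors H_1(M_n^\alpha,\bbZ)|+\log\vol(M_n^\alpha).\]

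Third, I would observe that Benjamini--Schramm convergence of a sequence of finite-volume hyperbolic $3$-manifolds to $\bbH^3$ forces $\vol(M_n^\alpha)\to\infty$ (by the characterization recalled in the paper, the measure of the $r$-thick part relative to the total volume tends to $1$ for every $r>0$, which can only happen on arbitrarily large manifolds). In particular
\[\frac{\log\vol(M_n^\alpha)}{\vol(M_n^\alpha)}\longrightarrow 0.\]

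Finally, dividing the displayed formula for $\tau(M_n^\alpha)$ by $-\vol(M_n^\alpha)$ and passing to the limit gives
\[\frac{-\tau(M_n^\alpha)}{\vol(M_n^\alpha)}=\frac{\log|\tors H_1(M_n^\alpha,\bbZ)|}{\vol(M_n^\alpha)}-\frac{\log\vol(M_n^\alpha)}{\vol(M_n^\alpha)}\longrightarrow \alpha,\]
where the case $\alpha=\infty$ is covered automatically since subtracting a quantity tending to $0$ from one tending to $\infty$ still yields $\infty$. This is the required conclusion.
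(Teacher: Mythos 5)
Your proposal is correct and follows exactly the route the paper indicates: apply Theorem~\ref{thm:densetorsion} to get the sequence, use the Cheeger--M\"uller formula (specialized to rational homology spheres so the regulator term drops out), and observe that BS-convergence to $\bbH^3$ forces $\vol(M_n^\alpha)\to\infty$ so that $\log\vol(M_n^\alpha)/\vol(M_n^\alpha)\to 0$. The paper simply calls the corollary ``an immediate application'' without spelling out the last step, which you supply cleanly, including the $\alpha=\infty$ case.
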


Note that in contrast to the above corollary, if $(M_n)$ is a residual tower of coverings of a fixed hyperbolic $3$-manifold $M$ then the proof of~\cite{bergeron+venkatesh}*{Theorem~4.5} implies that
\[\liminf_{i\to\infty} \frac{-\tau(M_n)}{\vol(M_n)}\le \frac{1}{6\pi}. \]

\begin{bibdiv}
\begin{biblist}

\bib{7s}{article}{
   author={Abert, Miklos},
   author={Bergeron, Nicolas},
   author={Biringer, Ian},
   author={Gelander, Tsachik},
   author={Nikolov, Nikolay},
   author={Raimbault, Jean},
   author={Samet, Iddo},
   title={On the growth of $L^2$-invariants for sequences of lattices in Lie
   groups},
   journal={Ann. of Math. (2)},
   volume={185},
   date={2017},
   number={3},
   pages={711--790},
}
		
\bib{ballmann}{book}{
   author={Ballmann, Werner},
   title={Lectures on spaces of nonpositive curvature},
   series={DMV Seminar},
   volume={25},
   note={With an appendix by Misha Brin},
   publisher={Birkh\"auser Verlag, Basel},
   date={1995},
   pages={viii+112},
}
\bib{BGS}{book}{
   author={Ballmann, Werner},
   author={Gromov, Mikhael},
   author={Schroeder, Viktor},
   title={Manifolds of nonpositive curvature},
   series={Progress in Mathematics},
   volume={61},
   publisher={Birkh\"auser Boston, Inc., Boston, MA},
   date={1985},
   pages={vi+263},
   }
		
\bib{BGLS}{article}{
   author={M. Belolipetsky}, 
   author={T. Gelander}, 
   author={A. Lubotzky}, 
   author={A. Shalev}, 
   title={Counting Arithmetic Lattices and Surfaces}
   journal={Ann. of Math.}
   volume={172},
   date={2010},
   number={3},
   pages={2197--2221},
   }

\bib{bergeron+venkatesh}{article}{
   author={Bergeron, Nicolas},
   author={Venkatesh, Akshay},
   title={The asymptotic growth of torsion homology for arithmetic groups},
   journal={J. Inst. Math. Jussieu},
   volume={12},
   date={2013},
   number={2},
   pages={391--447},
  }

\bib{bieri}{book}{
   author={Bieri, Robert},
   title={Homological dimension of discrete groups},
   series={Queen Mary College Mathematical Notes},
   edition={2},
   publisher={Queen Mary College, Department of Pure Mathematics, London},
   date={1981},
}

\bib{biringer}{article}{
	author={Biringer, Ian}, 
	title={Metrizing the Chabauty topology}, 
	journal={arXiv: 1610.07396},
	date={2016},
}

\bib{dehnfilling}{article}{
   author={Bleiler, Steven A.},
   author={Hodgson, Craig D.},
   title={Spherical space forms and Dehn filling},
   journal={Topology},
   volume={35},
   date={1996},
   number={3},
   pages={809--833},

}
	
\bib{boileau+porti}{article}{
   author={Boileau, Michel},
   author={Porti, Joan},
   title={Geometrization of 3-orbifolds of cyclic type},
   note={Appendix A by Michael Heusener and Porti},
   journal={Ast\'erisque},
   number={272},
   date={2001},
   pages={208},
}

		
\bib{BrDu}{article}{
   author={Brock, Jeffrey F.}
   author={Dunfield, Nathan M.},
   title={Injectivity radii of hyperbolic integer homology 3-spheres.},
   journal={Geom. Topol.},
   date={2015},
   volume={19},
   number={1}
   pages={497--523.},

}

\bib{counting}{article}{
   author={Burger, M.},
   author={Gelander, T.},
   author={Lubotzky, A.},
   author={Mozes, S.},
   title={Counting hyperbolic manifolds},
   journal={Geom. Funct. Anal.},
   volume={12},
   date={2002},
   number={6},
   pages={1161--1173},
}

\bib{burns+spatzier}{article}{
   author={Burns, Keith},
   author={Spatzier, Ralf},
   title={Manifolds of nonpositive curvature and their buildings},
   journal={Inst. Hautes \'Etudes Sci. Publ. Math.},
   number={65},
   date={1987},
   pages={35--59},

}

\bib{Cheeger}{article}{
    AUTHOR = {Cheeger, Jeff},
     TITLE = {Finiteness theorems for {R}iemannian manifolds},
   JOURNAL = {Amer. J. Math.},
  FJOURNAL = {American Journal of Mathematics},
    VOLUME = {92},
      YEAR = {1970},
     PAGES = {61--74},
}

\bib{cheeger}{article}{
   author={Cheeger, Jeff},
   title={Analytic torsion and the heat equation},
   journal={Ann. of Math. (2)},
   volume={109},
   date={1979},
   number={2},
   pages={259--322},
  }

\bib{tomdieck}{book}{
   author={tom Dieck, Tammo},
   title={Algebraic topology},
   series={EMS Textbooks in Mathematics},
   publisher={European Mathematical Society (EMS), Z\"urich},
   date={2008},
   pages={xii+567},

}

\bib{emery}{article}{
   author={Emery, Vincent},
   title={Torsion homology of arithmetic lattices and $K_2$ of imaginary
   fields},
   journal={Math. Z.},
   volume={277},
   date={2014},
   number={3-4},
   pages={1155--1164},
}

\bib{borelconj}{article}{
   author={Farrell, F. T.},
   author={Jones, L. E.},
   title={Rigidity for aspherical manifolds with $\pi_1\subset{\rm GL}_m({\bf R})$},
   journal={Asian J. Math.},
   volume={2},
   date={1998},
   number={2},
   pages={215--262},
}
		
\bib{bishop+gromov}{book}{
   author={Gallot, Sylvestre},
   author={Hulin, Dominique},
   author={Lafontaine, Jacques},
   title={Riemannian geometry},
   series={Universitext},
   edition={3},
   publisher={Springer-Verlag, Berlin},
   date={2004},
   pages={xvi+322},
      }
\bib{hv}{article}{
   author={Gelander, T.},
   title={Homotopy type and volume of locally symmetric manifolds},
   journal={Duke Mathematical Journal},
   volume={124},
   date={2004},
   number={3},
   pages={459--515},
}
\bib{vol-vs-rank}{article}{
   author={Gelander, Tsachik},
   title={Volume versus rank of lattices},
   journal={J. Reine Angew. Math.},
   volume={661},
   date={2011},
   pages={237--248},
   }

\bib{IRS}{article}{
   author={Gelander, T.},
   title={A lecture on invariant random subgroups},
   journal={arXiv:1503.08402},
   date={2015},
}
%

\bib{WUD}{article}{
   author={Gelander, T.},
   title={Kazhdan--Margulis theorem for invariant random subgroups},
   journal={Adv. Math.},
   date={2018},
   volume={327},
   pages={47--51},
}

\bib{local-rigidity}{article}{
   author={Gelander, Tsachik}
   author={Levit, Arie},
   title={Local Rigidity Of Uniform Lattices},
   journal={arXiv:1605.01693, to appear in Commentarii Mathematici Helvetici},
   date={2016},
}
\bib{commensurable}{article}{
   author={Gelander, Tsachik}
   author={Levit, Arie},
   title={Counting commensurability classes of hyperbolic manifolds.},
   journal={Geom. Funct. Anal.},
   date={2014},
   volume={24},
   number={5},
   pages={1431--1447},

}

\bib{Gromov}{article}{
   AUTHOR = {Gromov, M.},
     TITLE = {Manifolds of negative curvature},
   JOURNAL = {J. Differential Geom.},
  FJOURNAL = {Journal of Differential Geometry},
    VOLUME = {13},
      YEAR = {1978},
    NUMBER = {2},
     PAGES = {223--230},
}

\bib{hatcher-notes}{book}{
   author={Hatcher, Allen},
   title={Algebraic topology},
   publisher={Cambridge University Press},
   place={Cambridge},
   date={2002},
   pages={xii+544},
 }

\bib{hatcher}{book}{
   author={Hatcher, Allen},
   title={Basic 3-manifold topology},
   note={\texttt{www.math.cornell.edu/~hatcher/3M}}
 }
 \bib{lackenby}{article}{
   author={Lackenby, Marc},
   title={Large groups, property $(\tau)$ and the homology growth of
   subgroups},
   journal={Math. Proc. Cambridge Philos. Soc.},
   volume={146},
   date={2009},
   number={3},
   pages={625--648},
   }
	
\bib{lueck}{book}{
   author={L{\"u}ck, Wolfgang},
   title={$L^2$-invariants: theory and applications to geometry and
   $K$-theory},
   series={Ergebnisse der Mathematik und ihrer Grenzgebiete. 3. Folge. A
   Series of Modern Surveys in Mathematics},
   volume={44},
   publisher={Springer-Verlag},
   place={Berlin},
   date={2002},
   pages={xvi+595},
}

 \bib{Margulis}{article}{
   author={Margulis, G. A.},
   title={On connections between metric and topological properties of
manifolds of nonpositive curvature},
   journal={Proc. the Sixth Topological Conf., Tbilisi,
USSR},
   date={1972},
   pages={83},
 }

\bib{may}{book}{
   author={May, J. P.},
   title={A concise course in algebraic topology},
   series={Chicago Lectures in Mathematics},
   publisher={University of Chicago Press, Chicago, IL},
   date={1999},
   pages={x+243},
}

\bib{mueller}{article}{
   author={M{\"u}ller, Werner},
   title={Analytic torsion and $R$-torsion of Riemannian manifolds},
   journal={Adv. in Math.},
   volume={28},
   date={1978},
   number={3},
   pages={233--305},
}
\bib{Raghunathan:book}{book}{
   author={Raghunathan, M. S.},
   title={Discrete subgroups of Lie groups},
   note={Ergebnisse der Mathematik und ihrer Grenzgebiete, Band 68},
   publisher={Springer-Verlag, New York-Heidelberg},
   date={1972},
   pages={ix+227},
}

\bib{Samet}{article}{
   author={Samet, Iddo},
   title={Betti numbers of finite volume orbifolds},
   journal={Geometry \& Topology},
   volume={17},
   date={2013},
   pages={1113--1147},
}

\bib{sauer}{article}{
   author={Sauer, Roman},
   title={Volume and homology growth of aspherical manifolds},
   journal={Geometry \& Topology},
   volume={20},
   date={2016},
   pages={1035--1059},
}

\bib{soule}{article}{
   author={Soul{\'e}, C.},
   title={Perfect forms and the Vandiver conjecture},
   journal={J. Reine Angew. Math.},
   volume={517},
   date={1999},
   pages={209--221},
}

\bib{thurston}{article}{
	author={Thurston, W.~P.},
	title={Geometry and Topology of three-manifolds},
	note={\texttt{http://library.msri.org/books/gt3m/}},
}

\end{biblist}
\end{bibdiv}

\end{document}